\numberwithin{equation}{section}
\theoremstyle{plain}
\newtheorem{theorem}{Theorem}[section]
\newtheorem{proposition}[theorem]{Proposition}
\newtheorem{lemma}[theorem]{Lemma}
\theoremstyle{definition}
\newtheorem{definition}[theorem]{Definition}
\newtheorem{example}[theorem]{Example}
\newtheorem{remark}[theorem]{Remark}
\newcommand{\bE}{\mathbb E}
\newcommand{\bN}{\mathbb N}
\newcommand{\bP}{\mathbb P}
\newcommand{\bR}{\mathbb R}
\newcommand{\mean}[1]{\mathbf{m}(#1)}
\newcommand{\sige}[1]{\boldsymbol\sigma_{\mathbf e}(#1)}
\newcommand{\sigt}[1]{\boldsymbol\sigma_{\mathbf t}(#1)}
\DeclareMathOperator{\Var}{Var}
\DeclareMathOperator{\cov}{cov}
\newcommand{\widebar}[1]{\accentset{\rule{.7em}{0.5pt}}{#1}}
\title{Parameter estimation of integrated\\ fractional Brownian motion}
\author{Marco Mastrogiovanni \footnote{Department of Information Engineering, Computer Science and Mathematics, University of L'Aquila, \url{marco.mastrogiovanni@graduate.univaq.it}} \and Yuliya Mishura \footnote{Department of Probability, Statistics and Actuarial Mathematics, Taras Shevchenko National University of Kyiv, \url{yuliyamishura@knu.ua}}  \and Stefania Ottaviano\footnote{Department of Mathematics ``Tullio Levi Civita”, University of Padova, \url{stefania.ottaviano@math.unipd.it}} \and Kostiantyn Ralchenko \footnote{Department of Probability, Statistics and Actuarial Mathematics, Taras Shevchenko National University of Kyiv, \url{kostiantynralchenko@knu.ua}, and School of Technology and Innovations, University of Vaasa, \url{kostiantyn.ralchenko@uwasa.fi}} \and Tiziano Vargiolu\footnote{Department of Mathematics ``Tullio Levi Civita”, University of Padova, \url{vargiolu@math.unipd.it}} }
\date{}
\begin{document}
\maketitle
\begin{abstract}
Fractional Brownian motion (fBm) is a canonical model for long-memory phenomena. In the presence of large amounts of potentially memory-bearing data, the data are often averaged, which can change the structure of the underlying relationships and affect standard estimation procedures.  To address this, we introduce the normalized integrated fractional Brownian motion (nifBm), defined as the average of fBm over a fixed interval. We derive its covariance structure, investigate the stationarity and self-similarity, and extend the framework to linear combinations of independent nifBms and models with deterministic drift.

For such linear combinations, we establish stationarity of increments, investigate the asymptotic behavior of the autocovariance function, and prove an ergodic theorem essential for statistical inference. We consider two statistical models: one driven by a single nifBm and another by a linear combination of two independent nifBms, including cases with deterministic drift. For both models, we propose estimators that are strongly consistent and asymptotically normal for both the drift and the full parameter set.

Numerical simulations illustrate the theoretical findings, providing a foundation for modeling averaged fractional dynamics, with potential applications in finance, energy markets, and environmental studies.
\\[11pt]
\textbf{2020 Mathematics Subject Classification.}
Primary 60G22; secondary 60G15, 62F12.
\\[8pt]
\textbf{Key words and phrases.}
Fractional Brownian motion,
integrated processes,
Gaussian processes,
mixed fractional models,
parameter estimation,
strong consistency,
asymptotic normality.
\end{abstract}

\section{Introduction}

In the past decade, models involving fractional dynamics have become widely used across various scientific fields, including communications, biology, physics, and finance. The growth of interest around these models is primarily due to the ability of fractional dynamics to describe systems with memory effects, where future values are influenced by past values rather than being entirely independent. The most notable example of such processes is fractional Brownian motion (fBm), which is characterized by its Hurst exponent $H \in (0,1)$: besides characterizing the smoothness of fBm paths, the Hurst exponent is a measure of its long-term memory, capturing the degree of autocorrelation and  its decaying over time. 

As it is very well-known,  the Hurst exponent  $H$  provides insight into the nature of a time series based on a fBm, allowing classification into three categories: if $H=0.5$ the time series is a random walk; if $H<0.5$ indicates anti-persistence which means that an up value is more likely to be followed by a down value, and vice versa; if $H>0.5$ the series is said to be persistent, it means that the direction of the next value is more likely to be the same as the current value.
 
There are numerous methods to estimate the Hurst exponent and characterize short- or long-range dependence, based on the behavior of the autocovariance function (ACVF) and on the self-similarity properties. A key issue in the literature, which we aim to highlight, is that in many studies researchers pre-process the data, often averaging it, before estimating $H$. This pre-processing step may introduce distortions that affect the accuracy of the exponent's estimation: in fact, nothing guarantees that a linear combination of increments of a fBm, like the typical result of this averaging process, is still a fBm. Common examples of this phenomenon are time series of spot electricity prices in markets having an hourly frequency: in fact, a quite common approach in literature is to calculate the series using a daily average before performing the estimation of $H$.

The main motivation of this paper is to try and correct this approach, by studying the process that we call normalized integrated fractional Brownian motion (nifBm), defined as 
\[
X_t^{h,H}:=\frac{1}{h} \int_{t}^{t+h} W^H_u \;du.
\]
where $h > 0$ is fixed and $t$ varies in the interval $[0,T]$, with $T \in (0,+\infty)$, and some processes related to it. This process is, of course, Gaussian, thus its distribution is fully characterized by its covariance operator, which in turn will depend on the increment $h$ and on the Hurst exponent $H$. Extensions of this process that we will consider in this paper are linear combinations of two independent nifBm, i.e.\ $a X_t^{h,H_1} + b X_t^{h,H_2}$, with $a, b \neq 0$ and $H_1 \neq H_2$, and drifted versions of these processes, where the drift is of the form $\mu G(t)$, with $G\colon[0,T] \to \bR$ known (possibly nonlinear) function and $\mu \neq 0$ being a parameter. We feel that this new class of processes could be easily applied to the modeling of phenomena where one does not observe directly something happening instantaneously (which could have the dynamics represented as a fBm), but rather the continuous accumulation of it in a given time interval of length $h$ (which could be represented as the increment of a nifBm). For example, two common ways to measure insolation in a given location are the solar radiance, which is an instantaneous measurement of solar radiation, and solar insolation, which is the solar radiance averaged over a given time period \cite[Chapter 2.5]{HonBow}. Thus, if one assumes that solar radiance follows a fBm, as a logical consequence, solar radiation should follow a nifBm. 

Without claiming to give a complete presentation of all works devoted to integrated fBm, let us mention nevertheless that it is widely studied both from a theoretical point of view and for applications. For example, in \cite{LiShao}  small ball estimates for integrated fBm are considered, in \cite{MR2035633} such  functionals of integrated fBm as the maximum, the position of the maximum, the occupation time above zero and some others are studied. In \cite{Molchan2018} the problem of the log-asymptotic behavior of the probability that the integrated fBm does not exceed a fixed level over a long time is solved. The authors of \cite{MR4466414} consider the persistence exponent of integrated fBm, a problem that originates in theoretical physics, where the persistence exponent serves as a simple measure of how fast a complicated physical system returns from a disordered initial condition to its stationary state. In \cite{fractalfract7020107} the persistence exponent is studied numerically, while in \cite{abundo2019integral} the mean and covariance of ifBm and of some other integrated Gaussian processes are investigated. In \cite{MR4880510} biomarker measurement modeling with a longitudinal submodel is treated, and to overcome longitudinal submodel challenges, the authors replace random slopes with scaled integrated fBm.
  
Linear combinations of two fractional processes have also been the subject of extensive research. The motivation for this approach is that one process typically captures short-term variations and exhibits rougher behavior, whereas the other represents long-term dynamics and is smoother. However, most existing studies concentrate on linear combinations of two fractional Brownian motions (fBms) or, as a special case, on the so-called  mixed fBm,  which is defined as a linear combination of a Wiener process and an fBm. However, we name mixed fBm a linear combination of any fBms. For instance, the linear combination of two fBms was examined in \cite{Ralchenko23}, where estimators were proposed for the coefficients $a$ and $b$, as well as for the two Hurst exponents $H_1$ and $H_2$. The problem of maximum likelihood estimation of the drift parameter in a model with linear drift driven by two independent fBms was studied in \cite{Mishura2016,MRZ24}. Moreover, \cite{Mishura2018NonlinearDriftMLE, Mishura2018twoest} introduced estimators for the parameter $\mu$ in a nonlinear drift term $\mu G(t)$, where the driving process is any Gaussian process with a known and nonsingular covariance operator.

The particular case of mixed fBm with one Wiener component has been investigated in greater detail from theoretical, practical, and statistical perspectives. Several methods for parameter estimation in such processes have been proposed, including the maximum likelihood method \cite{XZZ2011}, Powell’s optimization algorithm \cite{ZSX2014}, and approaches based on power variations \cite{DMS2015}. Simultaneous parameter estimation for mixed fBm with drift has also been addressed using different methodologies: a modification of maximum likelihood approach in \cite{DPS2020}, and two further approaches -- power variation–based estimators and ergodic-type estimators -- in \cite{Kukush2021}. 

Combining these two important points, namely,   averaging of data with memory and statistical estimation in mixed fractional models (linear combinations of fBms), we come to a new and logical continuation: statistical estimation in averaged (integrated and normalized) fractional models and their linear combinations.

The paper is organized as follows.
In Section~\ref{sec2}, we compute the covariance function of nifBm and of its increments, and investigate the properties of stationarity and self-similarity.
Section~\ref{sec3} is devoted to the study of linear combinations of two nifBms. In particular, we establish the stationarity of increments, analyze the asymptotic behavior of the corresponding autocovariance function, and derive an ergodic theorem, which plays a key role in parameter estimation.
The subsequent sections address statistical problems related to the processes introduced above. Specifically, Section~\ref{sec4} presents a statistical model involving two nifBms with drift, examines two estimators of the drift parameter, and establishes their strong consistency and normality. As a special case, we also discuss the corresponding model with a single nifBm.
The simultaneous estimation of all parameters of a nifBm and of a linear combination of two nifBms is studied in Sections~\ref{constr}--\ref{sec:asnorm}. Section~\ref{constr} focuses on the construction of strongly consistent estimators, while Section~\ref{sec:asnorm} establishes their joint asymptotic normality.
Section~\ref{sec:simul} presents numerical simulations that support our theoretical results.
Finally, Appendix~\ref{app:A} contains proofs of several auxiliary lemmas, and Appendix~\ref{app:gaus-vect} summarizes useful properties of Gaussian random vectors together with a limit theorem employed in the proof of asymptotic normality.

\section{Normalized integrated fractional Brownian motion}\label{sec2}

In this section, we aim to compute covariances and investigate the properties of stationarity and self-similarity of the normalized integral over time of fractional Brownian motion  and of its increments.

\subsection{Basics of fractional Brownian motion} Let $(\Omega,\mathcal{F},\bP)$ be a probability space on which all the processes considered below are correctly defined.

We recall that fractional Brownian motion $W^H=\{ W^H_t, t\ge 0\}$ with Hurst index $H\in(0,1)$ is a centered   Gaussian process  having the following covariance function:
\[
\bE \left[W_s^H W_t^H \right]=\frac{1}{2}\left(s^{2H}+t^{2H}-|s-t|^{2H} \right).
\] 

For $H=\frac{1}{2}$, the fBm is  a classical Brownian motion. If $H=1$, then $W_t^H=tW_1^H$ a.s. for all $t \geq 0$.
For $H > 1/2$, the increments of the process are positively
correlated, while for $H < 1/2$ they are negatively correlated. Moreover, this process has
the self-similarity property, with parameter of self-similarity (or scaling exponent) $H$ and posses stationary increments.
Since we will be interested in the incremental covariances, we recall here the result that can be  obtained by direct calculations and is well-known.  
\begin{lemma}\label{lemincr} For any $H\in(0,1)$ and any $ 0\le s\le t\le u\le v$
$$\mathbb{E}(W_t^H-W_s^H)(W_v^H-W_u^H)=\frac12\big((v-s)^{2H}+(u-t)^{2H}-(v-t)^{2H}-(u-s)^{2H}\big).$$    
\end{lemma}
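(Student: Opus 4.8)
The plan is to expand the product of increments using the defining covariance function of fractional Brownian motion. Since $W^H$ is centered, we have
\[
\mathbb{E}(W_t^H-W_s^H)(W_v^H-W_u^H)
= \mathbb{E}[W_t^H W_v^H] - \mathbb{E}[W_t^H W_u^H] - \mathbb{E}[W_s^H W_v^H] + \mathbb{E}[W_s^H W_u^H].
\]
Into each of these four terms I would substitute the formula $\mathbb{E}[W_a^H W_b^H] = \tfrac12(a^{2H} + b^{2H} - |a-b|^{2H})$. The next step is to observe that the pure power terms $s^{2H}, t^{2H}, u^{2H}, v^{2H}$ cancel: each of $s^{2H}$ and $v^{2H}$ appears once with a plus sign and once with a minus sign (and similarly $t^{2H}$, $u^{2H}$ disappear after combining), so only the absolute-value cross terms survive.

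What remains is $-\tfrac12\big(|t-v|^{2H} - |t-u|^{2H} - |s-v|^{2H} + |s-u|^{2H}\big)$. Using the ordering hypothesis $0\le s\le t\le u\le v$, I would resolve each absolute value by its sign: $|t-v| = v-t$, $|t-u| = u-t$, $|s-v| = v-s$, and $|s-u| = u-s$. Substituting these and distributing the leading minus sign yields exactly
\[
\tfrac12\big((v-s)^{2H} + (u-t)^{2H} - (v-t)^{2H} - (u-s)^{2H}\big),
\]
which is the claimed identity.

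The computation is entirely routine and there is no genuine obstacle; the only point requiring a modicum of care is the bookkeeping in the cancellation of the pure power terms and the correct removal of the absolute values, where the ordering $s\le t\le u\le v$ must be used to ensure each difference inside a modulus is nonnegative. It is worth noting that the ordering is needed precisely so that $t\le u$ (hence $|t-u| = u-t$) rather than for the extreme terms, which would follow from $s\le v$ alone. Since the statement asserts these are obtainable by direct calculation and are well known, I would present the expansion and cancellation in a single displayed line without further commentary.
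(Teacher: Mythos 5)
Your proof is correct and is exactly the ``direct calculation'' the paper alludes to: the paper states Lemma~\ref{lemincr} without proof, noting only that it is well known and obtainable by direct computation. Your expansion by bilinearity, cancellation of the pure power terms $s^{2H},t^{2H},u^{2H},v^{2H}$, and resolution of the absolute values under the ordering $0\le s\le t\le u\le v$ supplies precisely that computation, and every step checks out.
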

 Recall also that, according to \cite{Norros1999}, fBm $W^H$ is a Gaussian-Volterra   process admitting a representation
\begin{equation} \label{WH}
W^H_t := \int_0^t K_H(t,s)\ dW_s \qquad \mbox{ for all } \qquad t \geq 0,
\end{equation} 
 where $W$ is a Wiener process (so called underlying Wiener process),   kernel function $K_H$ has a form
$$ K_H(t,s) = c_H \left[ \left(\frac{t}{s}\right)^{H-\frac{1}{2}} (t-s)^{H-\frac{1}{2}} +\left(\frac{1}{2}-H \right) \int_s^t u^{H-\frac{3}{2}}\left(\frac{u}{s}-1\right)^{H-\frac{1}{2}} du\right], $$ 
  for  $H > \frac12$ it is simplified to 
 
$$ K_H(t,s) = \left(H-\frac12\right)c_H s^{1/2-H}\int_s^t {u}^{H-\frac{1}{2}}(u-s)^{H-\frac{3}{2}}du, $$ 
 and $c_H:=\left(\frac{2 H \Gamma(H+\frac{1}{2})\Gamma(\frac{3}{2}-H))}{\Gamma (2-2H)} \right)^{1/2}$. It is also well known, see e.g. \cite{mishura2008stochastic}, that the filtrations generated by $W^H$ and $W$ coincide. Denote this common filtration as $\mathcal{F}=\{\mathcal{F}_t,\,t\ge 0\}.$

\subsection{Covariances and incremental covariances of the normalized integrated fractional Brownian motion. Stationarity of increments}  Now, taking into account that the trajectories of fBm are continuous a.s., for any $ t\ge 0$ we can consider the normalized Riemann integral of the form 
\begin{equation}\label{Xintmean}
X_t^{h,H}:=\frac{1}{h} \int_{t}^{t+h} W^H_u \;du. 
\end{equation}
 In the rest of the paper, we refer to this process as the normalized integrated fractional Brownian motion (nifBm). 
 If the parameter $H$ is clear from the context, we will write simply $X_t^h$. Of course, under any consideration, even being considered as a function of both parameters, $t$ and $h$,  nifBm is Gaussian and zero-mean process. Therefore, its main characteristics is a covariance function.  
So, let us calculate covariance and variance of this process and of its increments. 
In these calculations the length $h$ of the interval of integration will be fixed while $t$ will be shifted. 
Note that some related calculations for variance and covariance of the integrated fBm were obtained  in \cite{abundo2019integral}. However, in order to get the statistical estimators, we need to produce  much more detailed analogs. 
 
First, let us compute the covariance of two values of nifBm. For the readers' convenience, we postpone  the proof of the following lemma   to  the Appendix.

\begin{lemma}\label{covarfar} For any $s \geq t$
\begin{equation}\begin{gathered} \label{covfar}
     \bE[X_t^h X_s^h]= \frac{1}{2h(2H+1)}\big({(s+h)}^{2H+1}-s^{2H+1}+ (t+h)^{2H+1}-t^{2H+1}\big)\\+\frac{1}{2h^2(2H+1)(2H+2)}\big(2(s-t)^{2H+2}-(s-t+h)^{2H+2}-|s-t-h|^{2H+2}\big).  \end{gathered}
\end{equation}   
 In particular, for any $t\ge 0$
 \begin{equation}\label{var1}
\bE[(X_t^h)^2] = \frac{1}{h(2H+1)}\big((t+h)^{2H+1}-t^{2H+1}\big)-\frac{h^{2H}}{(2H+1)(2H+2)}.  \end{equation} 
\end{lemma}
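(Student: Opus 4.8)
The plan is to start from the definition \eqref{Xintmean} and push the expectation inside the double integral. Since the paths of $W^H$ are a.s.\ continuous and the covariance is locally bounded, Fubini's theorem gives
\[
\bE[X_t^h X_s^h]=\frac{1}{h^2}\int_t^{t+h}\!\!\int_s^{s+h}\bE\!\left[W_u^H W_v^H\right]dv\,du
=\frac{1}{2h^2}\int_t^{t+h}\!\!\int_s^{s+h}\bigl(u^{2H}+v^{2H}-|u-v|^{2H}\bigr)\,dv\,du,
\]
using the known fBm covariance. The computation then splits into three double integrals, one for each summand.

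The first two terms are elementary: integrating $u^{2H}$ over the rectangle factorizes, producing $h\cdot\frac{(t+h)^{2H+1}-t^{2H+1}}{2H+1}$, and symmetrically for $v^{2H}$. After division by $2h^2$ these two contributions yield exactly the first line of \eqref{covfar}.

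The only delicate term is $I:=\int_t^{t+h}\int_s^{s+h}|u-v|^{2H}\,dv\,du$, and this is where the main obstacle lies: because $s\ge t$ but $s-t$ may be smaller or larger than $h$, the sign of $u-v$ is not constant on the rectangle, so a naive domain-splitting argument would require separate cases. To sidestep this, I would use that $|x|^{2H}$ admits the explicit second antiderivative $G(x):=\frac{|x|^{2H+2}}{(2H+1)(2H+2)}$, with first antiderivative $g(x):=\frac{x|x|^{2H}}{2H+1}$; since $H>0$ we have $2H+2>1$, so $g$ is continuous and $G\in C^1(\bR)$ with $G'=g$ and $g'=|x|^{2H}$ everywhere, including at the origin. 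Integrating first in $u$ and then in $v$ by the fundamental theorem of calculus produces the clean second-difference identity
\[
I=G(t+h-s)-2\,G(t-s)+G(t-s-h),
\]
valid uniformly with no case distinction precisely because $G$ is even and $C^1$ across $0$.

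Finally, writing $d:=s-t\ge 0$ and exploiting that $G$ is even turns this into $\frac{1}{(2H+1)(2H+2)}\bigl(|d-h|^{2H+2}-2d^{2H+2}+(d+h)^{2H+2}\bigr)$; multiplying by $-\frac{1}{2h^2}$ and substituting back $d=s-t$ reproduces the second line of \eqref{covfar}. The variance \eqref{var1} then follows by specializing to $s=t$, where $d=0$ collapses the three boundary terms to $-2h^{2H+2}$, giving the stated correction $-\frac{h^{2H}}{(2H+1)(2H+2)}$. I expect the bookkeeping in evaluating the second difference to be the only genuine subtlety; everything else is routine integration.
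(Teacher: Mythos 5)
Your proof is correct, and its skeleton coincides with the paper's: both apply Fubini's theorem, insert the fBm covariance, and reduce the problem to two elementary rectangle integrals plus the delicate term $\int\!\!\int |u-v|^{2H}\,dv\,du$. Where you genuinely diverge is in the evaluation of that third integral. The paper sets $\rho=s-t$ and computes $J(\rho,h)=\int_0^h\!\!\int_0^h |v-u+\rho|^{2H}\,du\,dv$ by explicit case analysis: for $\rho\ge h$ the integrand has constant sign and one integrates directly, while for $\rho\le h$ the square is cut into three subdomains according to the sign of $v-u+\rho$, and one checks that both cases yield the same closed form (incidentally, the paper's displayed intermediate expression in the case $\rho\ge h$ is missing a factor $2H+2$ in the denominator, a typo that the final formula corrects). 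You instead exploit that $|x|^{2H}$ has the even, $C^1$ second antiderivative $G(x)=|x|^{2H+2}/\bigl((2H+1)(2H+2)\bigr)$, with $G'=g$, $g(x)=x|x|^{2H}/(2H+1)$, and $g'(x)=|x|^{2H}$ valid across the origin precisely because $H>0$; two applications of the fundamental theorem of calculus then give the second-difference identity $I=G(t+h-s)-2G(t-s)+G(t-s-h)$ with no case distinction, and evenness of $G$ converts this into the stated expression uniformly in the two regimes $s-t\le h$ and $s-t\ge h$. Your route buys uniformity and minimal bookkeeping --- the absolute value $|s-t-h|^{2H+2}$ in \eqref{covfar} appears automatically rather than by merging cases --- while the paper's route is more elementary, using nothing beyond direct integration over explicit subdomains at the cost of a two-case verification. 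The specialization $s=t$, which collapses the second difference to $-2h^{2H+2}$ and yields \eqref{var1}, is handled the same way in both arguments.
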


\begin{remark} \label{rem:varX}
It is clear from the formula  \eqref{covfar}, more precisely, from the first term in its right-hand side, as well as  from \eqref{var1}, that the process $X^h$ is not stationary in argument $t$. If we analyze the value  $\bE[(X_t^h)^2]$ from \eqref{var1} as $t\to \infty$, then   $$(t+h)^{2H+1}-t^{2H+1}=t^{2H+1}\left(\left(1+\frac ht\right)^{2H+1}-1\right)\sim t^{2H+1}(2H+1)\frac ht, $$
whence  $\bE[(X_t^h)^2]\sim t^{2H}$ as $t\to\infty$, the same asymptotics as the fBm has.
\end{remark}

Now our goal, taking into account the subsequent construction of statistical estimators that will be based on the ergodic theorem, is to construct a stationary process, starting from  nifBm. In this regard, let us consider its increments obtained by shifting time $t$ by an amount $nh$ proportional to $h$. It is clear that the form of covariance will be different for $n=0$ (variance, intervals of integration are  completely overlapping), $n=1$ (covariance, intervals are partially overlapping) and $n\ge 2$ (covariance, intervals are not overlapping). Taking this into account, introduce the notation \begin{equation*}\begin{gathered}\gamma(H,n) := \begin{cases}
\displaystyle \frac{2(2^{2H}  -1)}{ (2H  +1)( H +1)}, \, n = 0,\\     
\displaystyle ( 2 (2H +1)(2H +2))^{-1}( 7-4 \cdot2^{2H +2}+3^{2H +2}),\, n = 1, \\
    \displaystyle  ( 2 (2H +1)(2H +2))^{-1}((n-2)^{2H +2}\!-4(n-1)^{2H +2}\!\\+6n^{2H +2}-4(n+1)^{2H +2}+\!(n+2)^{2H +2}), \, n \geq 2,
    \end{cases}
\end{gathered}\end{equation*}
 that can be expressed in a more concise form as
\begin{equation}\begin{gathered} \label{gammy}
\gamma(H,n)=\frac{(|n-2|^{2H +2}-4|n-1|^{2H +2}+6|n|^{2H +2}-4|n+1|^{2H +2}+ |n+2|^{2H +2})} { 4 (2H +1)( H +1)},\end{gathered}\end{equation}
and proceed with the covariance of  the increments of the process  $X^h$.

\begin{theorem}\label{theoremfour}
  For any $t\ge 0$ and $n\ge 0$
    \begin{equation}\label{covfin}
   \bE[(X_{t+h}^h- X_t^h)(X_{t+(n+1)h}^h- X_{t+nh}^h)] =  h^{2H}\gamma(H,n). 
  \end{equation}
\end{theorem}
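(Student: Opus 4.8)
The plan is to expand the product of increments by the bilinearity of covariance into four two-point covariances, each supplied by Lemma~\ref{covarfar}, and then to verify that the contributions cancel and recombine into exactly $h^{2H}\gamma(H,n)$. Writing the two factors out gives
\[
\bE[(X_{t+h}^h- X_t^h)(X_{t+(n+1)h}^h- X_{t+nh}^h)] = C_1 - C_2 - C_3 + C_4,
\]
where $C_1=\bE[X_{t+h}^h X_{t+(n+1)h}^h]$, $C_2=\bE[X_{t+h}^h X_{t+nh}^h]$, $C_3=\bE[X_t^h X_{t+(n+1)h}^h]$, and $C_4=\bE[X_t^h X_{t+nh}^h]$. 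All eight time arguments have the form $t+kh$, so I would substitute each pair into \eqref{covfar}. A mild point to settle first is the ordering of the arguments: Lemma~\ref{covarfar} is stated for $s\ge t$, but the right-hand side of \eqref{covfar} is symmetric in the two boundary points (the first group is symmetric, and the second depends only on $|s-t|$ since $|s-t-h|$ already carries an absolute value). The expression is therefore even under swapping the two times, so I may apply it irrespective of ordering, which matters only for $C_2$ when $n\in\{0,1\}$.

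Next I would split \eqref{covfar} into its two natural pieces: a \emph{boundary} part $\tfrac{1}{2h(2H+1)}\big(f(s)+f(t)\big)$ with $f(x):=(x+h)^{2H+1}-x^{2H+1}$, and an \emph{increment} part $\tfrac{1}{2h^2(2H+1)(2H+2)}\,g(s-t)$ with $g(x):=2|x|^{2H+2}-|x+h|^{2H+2}-|x-h|^{2H+2}$, an even function. For the boundary part, since $f$ enters additively and symmetrically, each of the four values $f(t),f(t+h),f(t+nh),f(t+(n+1)h)$ appears in exactly two of the $C_i$ with opposite signs: $f(t+h)$ in $C_1,C_2$, $f(t)$ in $C_3,C_4$, $f(t+(n+1)h)$ in $C_1,C_3$, and $f(t+nh)$ in $C_2,C_4$. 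Hence the whole boundary contribution cancels in the alternating sum $C_1-C_2-C_3+C_4$. This is the decisive simplification and the reason one looks at increments rather than at $X^h$ itself, whose variance \eqref{var1} is nonstationary.

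It then remains to assemble the increment parts. The four differences $s-t$ equal $nh$, $(n-1)h$, $(n+1)h$, $nh$, so the surviving quantity is $\tfrac{1}{2h^2(2H+1)(2H+2)}\big(2g(nh)-g((n-1)h)-g((n+1)h)\big)$. Factoring $g(kh)=h^{2H+2}p(k)$ with $p(k):=2|k|^{2H+2}-|k+1|^{2H+2}-|k-1|^{2H+2}$, a short rearrangement gives the fourth difference
\[
2p(n)-p(n-1)-p(n+1)=|n-2|^{2H+2}-4|n-1|^{2H+2}+6|n|^{2H+2}-4|n+1|^{2H+2}+|n+2|^{2H+2},
\]
which is precisely the numerator of \eqref{gammy}. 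Tracking the prefactor $\tfrac{1}{2h^2(2H+1)(2H+2)}\cdot h^{2H+2}=\tfrac{h^{2H}}{2(2H+1)(2H+2)}$ and using $2(2H+2)=4(H+1)$ matches the constant exactly with that of $\gamma(H,n)$, giving $h^{2H}\gamma(H,n)$.

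I do not anticipate a serious obstacle: the argument is exact algebra, and the absolute-value form of $g$ lets me treat all $n\ge 0$ uniformly, with no need for the case split $n=0,1,\ge 2$ appearing in the piecewise definition of $\gamma$. The only thing demanding care is the sign bookkeeping in the fourth difference, so that the coefficients $(1,-4,6,-4,1)$ come out correctly; as a sanity check I would verify the $n=0$ value reduces to $\tfrac{2(2^{2H}-1)}{(2H+1)(H+1)}$, matching the first branch of $\gamma(H,\cdot)$.
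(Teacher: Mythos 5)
Your proof is correct, and it rests on the same key ingredient as the paper's proof: expanding the increment covariance into four two-point covariances supplied by Lemma~\ref{covarfar} and watching the $t$-dependent terms cancel. The difference is in the execution, and it is a genuine one. The paper carries out the computation explicitly only for $n=0$ and dismisses the cases $n=1$ and $n\ge 2$ as ``very similar but rather tedious,'' precisely because it treats $\gamma(H,n)$ through its piecewise definition (overlapping, partially overlapping, and disjoint integration intervals). Your organization removes the need for that case split: isolating the \emph{boundary} part $f(x)=(x+h)^{2H+1}-x^{2H+1}$ and showing it cancels structurally in the alternating sum $C_1-C_2-C_3+C_4$ (each $f$-value occurs exactly twice with opposite signs) explains the cancellation once and for all, rather than verifying it numerically for a particular $n$; and writing the \emph{increment} part as an even function $g(s-t)=2|s-t|^{2H+2}-|s-t+h|^{2H+2}-|s-t-h|^{2H+2}$ both justifies applying \eqref{covfar} without worrying about the ordering $s\ge t$ (relevant for $C_2$ when $n\in\{0,1\}$) and lets the surviving terms collapse into the fourth difference with coefficients $(1,-4,6,-4,1)$, i.e.\ exactly the numerator of the unified formula \eqref{gammy}, with the prefactor matching via $2(2H+2)=4(H+1)$. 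In short, the paper's proof verifies one case and asserts the rest; yours is a single uniform computation valid for all $n\ge 0$, so it actually supplies the part of the argument the paper omits. The only point worth stating slightly more carefully is the symmetry claim: as printed, \eqref{covfar} contains $(s-t)^{2H+2}$ and $(s-t+h)^{2H+2}$ without absolute values, so the symmetric extension to $s<t$ is a (harmless, but worth one sentence) redefinition via your even function $g$, not a literal property of the displayed formula.
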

 \begin{proof} The proof is completely based on Lemma \ref{covarfar}, and it is just necessary  to calculate, according to the formulas \eqref{covfar} and \eqref{var1}, all the covariances and then add them up. Therefore, we will consider only the case $n=0$ in detail to show how the terms containing $t$, mutually cancel each other. The covariances that correspond to the cases  $n=1$ and $n\ge 2$ are calculated in the same spirit, with the help of \eqref{covfar} and \eqref{var1}, therefore  we shall omit these very similar but rather tedious calculations. So, let $n=0$. Then, according to  \eqref{var1},
\begin{equation*} 
\bE[(X_{t+h}^h)^2] = \frac{1}{h(2H+1)}\big((t+2h)^{2H+1}-(t+h)^{2H+1}\big)-\frac{h^{2H}}{(2H+1)(2H+2)},  \end{equation*} 
$\bE[(X_{t}^h)^2]$ is calculated in \eqref{var1}, and  the value  $\bE[X_t^h X_{t+h}^h]$ corresponds to \eqref{covarfar} with $s=t+h,$ and  equals $$ \bE[X_t^h X_{t+h}^h]= \frac{1}{2h(2H+1)}\big({(t+2h)}^{2H+1} -t^{2H+1}\big) +\frac{1}{2h^2(2H+1)(2H+2)}\big(2h^{2H+2}-(2h)^{2H+2} \big).$$
Therefore 
\begin{align*}
         \bE (X_{t+h}^h- X_t^h)^2 &=\frac{1}{ h(2H+1) }\big[(t+2h)^{2H+1}-(t+h)^{2H+1}\\
         &\qquad\qquad\qquad+(t+h)^{2H+1}-t^{2H+1}-(t+2h)^{2H+1}+t^{2H+1}\big]\\
         &\quad-\frac {2h^{2H}}{(2H+1)(2H+2)}-\frac{2h^{2H+2}-(2h)^{2H+2}}{h^2(2H+1)(2H+2)}\\
         &=   \frac{h^{2H}}{ (2H+1)(2H+2)}\Big(2^{2H+2}-4  \Big)
         =\frac{2h^{2H}}{ (2H+1)( H+1)}\Big(2^{2H}-1 \Big).
\qedhere
\end{align*}
\end{proof}

Now it is interesting to study for which values of $H\in(0,1)$ the quantities $\gamma(H,n)$ are  positive  or vice versa, 
and compare this to the increments of fBm that are positively or negatively correlated for $H>1/2$ or $H<1/2$, respectively. 

\begin{lemma} \label{H0}\begin{itemize}
    \item[$(i)$] Let $H\in(0,1)$. The function $\gamma(H,1)$,\ is zero for $H = 0$ and admits a unique $H_0 \in (0,1)$ such that $\gamma(H_0,1) = 0$, $\gamma(H,1) < 0$ for $ H\in(0,H_0)$ and $\gamma(H,1) > 0$ for $ H\in(H_0,1]$. Numerical computations give  $H_0 \simeq 0.2626229$. The graph of the function $\gamma(H,1)$ over the interval $(0,1)$ is shown in Figure~\ref{fig:grafico}.
     \item[$(ii)$] For any $n\ge 2$ function $\gamma(H,n),\, H\in(0,1)$ is strictly negative for $H\in (0,1/2)$, zero for $H=1/2$ and strictly positive for $H\in(1/2,1)$. 
\end{itemize}
\end{lemma}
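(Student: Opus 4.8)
The plan is to treat the two parts separately, in both cases using that the denominator $4(2H+1)(H+1)$ in \eqref{gammy} is strictly positive for $H\in(0,1)$, so that the sign of $\gamma(H,n)$ is governed entirely by its numerator.

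For part $(i)$ I would specialize \eqref{gammy} to $n=1$. Using $|{-1}|=|1|=1$, $|0|=0$, $2^{2H+2}=4\cdot 4^{H}$ and $3^{2H+2}=9\cdot 9^{H}$, the numerator becomes
\[
f(H):=7-16\cdot 4^{H}+9\cdot 9^{H},
\]
and it suffices to analyze $f$ on $[0,1]$. Immediately $f(0)=0$ and $f(1)=7-64+81=24>0$. The key step is to show that $f$ is strictly convex: since
\[
f''(H)=4^{H}\bigl(9(\ln 9)^{2}(9/4)^{H}-16(\ln 4)^{2}\bigr),
\]
the bracketed factor is increasing in $H$ and is already positive at $H=0$ (there it equals $36(\ln 3)^{2}-64(\ln 2)^{2}>0$), so $f''>0$ throughout $[0,\infty)$. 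Combined with $f'(0)=18\ln 3-32\ln 2<0$, strict convexity forces $f$ to decrease from $f(0)=0$ to a unique interior minimum and then increase; as $f(1)>0$, there is exactly one $H_0\in(0,1)$ with $f(H_0)=0$, and then $f<0$ on $(0,H_0)$ and $f>0$ on $(H_0,1]$. The numerical value $H_0\simeq 0.2626229$ is obtained by solving $f(H_0)=0$.

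For part $(ii)$ I fix $n\ge 2$; then all arguments $n-2,\dots,n+2$ in \eqref{gammy} are nonnegative, so the numerator equals $\Delta^{4}\phi(n-2)$, the fourth forward difference (step $1$) of $\phi(x)=x^{p}$ with $p:=2H+2\in(2,4)$. I would represent this difference through the fourth derivative by the Peano/B-spline identity
\[
\Delta^{4}\phi(n-2)=\int_{[0,1]^{4}}\phi^{(4)}\Bigl(n-2+\textstyle\sum_{i=1}^{4}s_i\Bigr)\,ds_1\cdots ds_4,
\]
where $\phi^{(4)}(x)=p(p-1)(p-2)(p-3)\,x^{p-4}$. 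Since $p,\,p-1,\,p-2>0$ and $x^{p-4}>0$ on $(0,\infty)$, the sign of $\phi^{(4)}$, hence of the whole integral and therefore of $\gamma(H,n)$, coincides with the sign of $p-3=2H-1$. This gives $\gamma(H,n)<0$ for $H\in(0,1/2)$ and $\gamma(H,n)>0$ for $H\in(1/2,1)$; at $H=1/2$ we have $p=3$, $\phi(x)=x^{3}$, and the fourth difference of a cubic vanishes identically, so $\gamma(1/2,n)=0$.

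The only delicate point I anticipate is justifying the integral representation when $n=2$, where the interval of integration abuts the origin and $\phi^{(4)}(x)$ is unbounded near $0$ (indeed non-integrable on $[0,4]$ when $p<3$). For $n\ge 3$ the interval $[n-2,n+2]$ is bounded away from $0$, $\phi$ is smooth there, and the classical mean value theorem for finite differences applies verbatim. For $n=2$ I would argue by regularization: apply the identity to $\phi_{\varepsilon}(x)=(x+\varepsilon)^{p}$, which is smooth on $[0,\infty)$, and let $\varepsilon\downarrow 0$. The left-hand side converges to the numerator $\Delta^{4}\phi(0)$ by continuity of $x\mapsto x^{p}$, while on the right $\int_{[0,1]^{4}}(\varepsilon+\sum_{i}s_i)^{p-4}\,ds$ converges to the finite integral $\int_{[0,1]^{4}}(\sum_{i}s_i)^{p-4}\,ds$: the singularity at the origin is integrable because the level-set $\{\sum_i s_i\approx t\}$ contributes an extra factor $t^{3}$ and $p-4>-2$. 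Hence the sign is still dictated by $p-3=2H-1$, which completes the argument.
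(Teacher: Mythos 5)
Your proof is correct, so let me compare it with the paper's. For part $(i)$ you and the paper do essentially the same thing: both reduce to the sign of the numerator $f(H)=7-2^{2H+4}+3^{2H+2}$ and establish its ``decreasing, then increasing'' shape on $[0,1]$; the paper does this by solving $f'(H)=0$ explicitly for the critical point $\widebar H\simeq 0.14157$, while you obtain the same shape from strict convexity ($f''>0$) together with $f'(0)<0$ and $f(1)=24>0$ --- an equivalent and equally rigorous variant. For part $(ii)$, however, your route is genuinely different. The paper argues probabilistically: since $h^{2H}\gamma(H,n)=\bE[(X_{t+h}^h-X_t^h)(X_{t+(n+1)h}^h-X_{t+nh}^h)]$, and for $n\ge 2$ the integration intervals do not overlap, this covariance is a double integral of incremental covariances of fBm over disjoint intervals, whose sign is that of $2H-1$ by Lemma~\ref{lemincr}; this is short, creates no smoothness issues, and explains conceptually why the sign matches that of fBm increments. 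You instead recognize the numerator as the fourth forward difference $\Delta^4\phi(n-2)$ of $\phi(x)=x^{2H+2}$, represent it as an integral of $\phi^{(4)}$ over $[0,1]^4$, and read off the sign from the factor $p-3=2H-1$. The delicate boundary case $n=2$ (where $\phi$ fails to be $C^4$ at the origin) is a technical obstacle that your method creates and the paper's method sidesteps entirely, but you handle it correctly: the regularization $\phi_\varepsilon(x)=(x+\varepsilon)^p$, monotone convergence, and the integrability of $\bigl(\sum_i s_i\bigr)^{p-4}$ near the origin (the simplex volume factor $t^3$ makes the singularity integrable for any $p>0$) give the strict sign in the limit. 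In exchange for this extra care, your argument is purely analytic and self-contained, and it actually proves a more general fact about the sign of fourth differences of power functions, independent of any probabilistic interpretation.
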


\begin{figure}[ht]
    \centering
    \includegraphics[width=0.8\textwidth]{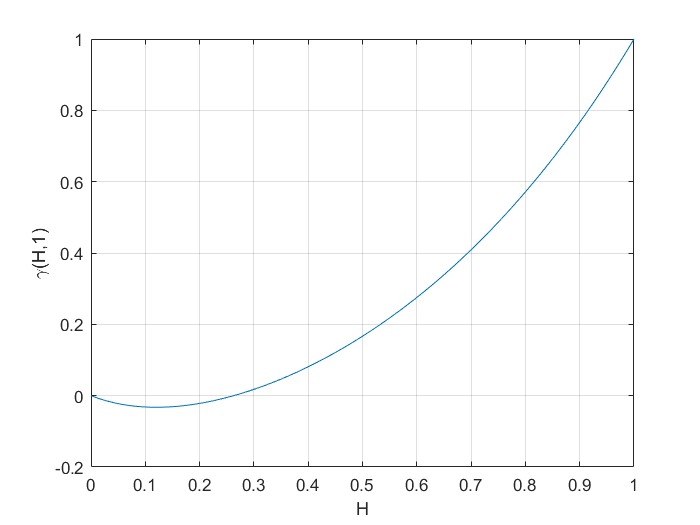} 
    \caption{Graph of the function $\gamma(H,1)$ for $H \in (0,1)$. The function becomes zero at $H_0 \simeq 0.2626229$.}
    \label{fig:grafico}
\end{figure}

\begin{remark}
Note that the covariance in \eqref{covfin} does not depend on $t$. Also, $X_{t}^h$ is a Gaussian process in both arguments $t$ and $h$. It means that choosing $t$ proportional to $h$ (as done in the main part of the paper), we get a stationary Gaussian sequence
$\{X_{(k+1)h}^h - X_{kh}^h, k \in \bN_0\}$.
Also, it is interesting to observe that in the brackets in the right-hand side of  \eqref{covfin} we get the subsequent binomial coefficients of the binomial expansion  for $(a-b)^4.$ This tendency is partially observed even in the numerator of $\gamma(H,1) $ that can be rewritten as $(3^{2H+2}-4\cdot2^{2H+2}+6)+1$, where 1 is forced to replace the remaining terms. 
\end{remark}

\subsection{Self-similarity of the normalized integral  }

Recall  that  fBm is a self-similar process with self-similarity index $H$. This property may be crucial in many estimation procedures. Let us establish self-similarity of  the process $X_t^h,\, t\ge 0.$ 
The standard definition of self-similarity  is the following one. 
\begin{definition} \label{selfsimilarity}
    A stochastic process $\{ Y_t , t \geq 0\}$ is called self-similar with self-similarity parameter $\alpha \in (0,1)$ if for all $c > 0$ 
    $$\{Y_{ct},t\ge 0\}\stackrel{\text{d}}{=} \{c^{\alpha}Y_t, t\ge 0\}.$$
\end{definition}

 Given that nifBm defined in \eqref{Xintmean} depends on $t$ and $h$, we consider the following modified property of self-similarity.
 \begin{lemma} \label{selfInt} For any $h>0$ a nifBm $X^h=\{X_t^h, t\ge 0\}$ is self-similar with self-similarity parameter $H$ in the sense that 
  for all $c > 0$ 
  \begin{equation}\label{ssmod}
      \{X^{ch}_{ct},t\ge 0\}\stackrel{\text{d}}{=} \{c^{H}X^h_t, t\ge0\}.  \end{equation}  
 \end{lemma}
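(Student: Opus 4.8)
The plan is to reduce the modified self-similarity \eqref{ssmod} of the nifBm to the ordinary self-similarity (Definition~\ref{selfsimilarity}) of the underlying fBm by a single change of variables inside the defining integral. First I would write the left-hand side explicitly from \eqref{Xintmean}:
\[
X_{ct}^{ch} = \frac{1}{ch}\int_{ct}^{c(t+h)} W_u^H\,du .
\]
Substituting $u = cv$ (so that $du = c\,dv$, and the limits $ct$ and $c(t+h)$ become $t$ and $t+h$) yields
\[
X_{ct}^{ch} = \frac{1}{h}\int_{t}^{t+h} W_{cv}^H\,dv .
\]

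Next I would invoke the self-similarity of fBm: as processes in the time variable, $\{W_{cv}^H,\ v\ge 0\}\stackrel{\text{d}}{=}\{c^H W_v^H,\ v\ge 0\}$. Since the map sending a continuous path $\{w_v\}_{v\ge 0}$ to the family of averages $\bigl\{\tfrac1h\int_t^{t+h} w_v\,dv\bigr\}_{t\ge 0}$ is a deterministic functional of the path, applying it to both sides of this equality in law preserves the equality in law. Hence, jointly in $t$,
\[
\Bigl\{\tfrac1h\textstyle\int_t^{t+h} W_{cv}^H\,dv,\ t\ge 0\Bigr\}
\stackrel{\text{d}}{=}
\Bigl\{\tfrac1h\textstyle\int_t^{t+h} c^H W_v^H\,dv,\ t\ge 0\Bigr\}
= \bigl\{c^H X_t^h,\ t\ge 0\bigr\},
\]
which is exactly \eqref{ssmod}. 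As a cross-check, since both $\{X_{ct}^{ch}\}$ and $\{c^H X_t^h\}$ are centered Gaussian processes, equality in distribution is equivalent to equality of covariance functions, and one could instead verify directly from \eqref{covfar} that $\bE[X_{ct}^{ch}X_{cs}^{ch}] = c^{2H}\,\bE[X_t^h X_s^h]$ for all $s,t\ge 0$; I would keep the change-of-variables line as the main argument to avoid this algebra.

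The only point requiring genuine care — and the step I would flag as the main (mild) obstacle — is the justification that equality in distribution of the driving processes transfers through the averaging functional to equality in distribution of the whole $t$-indexed families. This is not merely a finite-dimensional statement: it must hold jointly over all $t\ge 0$. It is handled either by the continuity of the averaging functional on the space of continuous paths (so that the two pushforward laws coincide), or, in the present Gaussian setting, by the covariance-matching observation above, which automatically encodes all finite-dimensional distributions and hence the law of the process.
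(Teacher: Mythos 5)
Your proof is correct and takes essentially the same route as the paper's own proof: rewrite $X_{ct}^{ch}$ via the substitution $u=cv$ and then invoke the self-similarity of the underlying fBm to conclude equality in law of the whole families. The only difference is that you make explicit the step the paper leaves implicit — that equality in law of $\{W_{cv}^H\}_{v\ge 0}$ and $\{c^H W_v^H\}_{v\ge 0}$ transfers through the deterministic averaging functional — which is a sound clarification rather than a divergence.
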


\subsection{Nonsingularity of the incremental covariance matrix}
Fix some $h>0$ and define the discrete-time  increments $\Delta X_k^h :=  X_{(k+1)h}^h - X_{kh}^h$, $k \in \bN_0$. This of course defines a Gaussian sequence 
$\Delta X^h = \{X_{(k+1)h}^h - X_{kh}^h, k \in \bN_0\}$. 
From Theorem \ref{theoremfour}, it is immediate to notice that 
\begin{equation}\label{cov-deltaX}
\bE[ \Delta X_k^h \Delta X_{k+n}^h ] = h^{2H} \gamma(H,n), \qquad \mbox{ for all } k \in \bN_0. 
\end{equation}
 
In the next lemma, we show that any covariance matrix of the increments $\Delta X_k^h$ is non-singular. 
\begin{proposition}\label{prop:covmatrnonsing}
If $0 \leq k_1 < \ldots < k_n$, then $\Sigma_{k_1,\ldots,k_n} := (\mathrm{Cov}(\Delta X^h_{k_i},\Delta X^h_{k_j}))_{i,j = 1 }^n$ has rank $n$.
\end{proposition}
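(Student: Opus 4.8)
The plan is to use the standard characterization that the Gaussian covariance matrix $\Sigma_{k_1,\ldots,k_n}$ is nonsingular if and only if the variables $\Delta X^h_{k_1},\ldots,\Delta X^h_{k_n}$ are linearly independent in $L^2(\Omega)$, i.e.\ no nontrivial combination $\sum_{i=1}^n c_i\,\Delta X^h_{k_i}$ vanishes almost surely. Writing out the definition of the increments, such a combination equals $\frac1h\int_0^\infty \phi(u)\,W^H_u\,du$, where $\phi := \sum_{i=1}^n c_i\big(\mathbbm 1_{[(k_i+1)h,(k_i+2)h)}-\mathbbm 1_{[k_ih,(k_i+1)h)}\big)$ is a step function, constant on each grid interval $[jh,(j+1)h)$ and supported on $[k_1h,(k_n+2)h)$. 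Since $h\neq 0$, it suffices to show that $\int_0^\infty \phi(u)\,W^H_u\,du = 0$ a.s.\ forces every $c_i$ to vanish.

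First I would pass to the underlying Wiener process through the Volterra representation \eqref{WH}. Substituting $W^H_u=\int_0^u K_H(u,s)\,dW_s$ and applying the stochastic Fubini theorem (legitimate because $\phi$ is bounded with compact support and $K_H(\cdot,s)\in L^2$), I obtain $\int_0^\infty \phi(u)\,W^H_u\,du = \int_0^\infty \psi(s)\,dW_s$, where $\psi(s) := \int_s^\infty \phi(u)\,K_H(u,s)\,du$. By the It\^o isometry the variance of this Wiener integral is $\|\psi\|_{L^2}^2$, so the combination vanishes a.s.\ if and only if $\psi\equiv 0$ almost everywhere. The problem is thereby reduced to proving injectivity of the map $\phi\mapsto\psi$ on step functions, that is, $\psi\equiv 0 \Rightarrow \phi\equiv 0$.

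The final step is a downward induction along the grid, driven by positivity of the kernel. Inspection of the explicit formula for $K_H$ shows that $K_H(u,s)>0$ for all $0<s<u$ and every $H\in(0,1)$: for $H>\frac12$ the simplified expression is a product of positive factors, while for $H\le\frac12$ the first summand in the general formula is strictly positive and the second is nonnegative (its prefactor $\frac12-H\ge0$ and all bases are positive). Hence $\int_s^T K_H(u,s)\,du>0$ whenever $s<T$. Now take $s$ in the topmost interval $\big((k_n+1)h,(k_n+2)h\big)$: for such $s$ the only mass of $\phi$ to the right of $s$ comes from the positive part of $\Delta X^h_{k_n}$, so $\psi(s)=c_n\int_s^{(k_n+2)h}K_H(u,s)\,du$, and $\psi\equiv0$ together with kernel positivity forces $c_n=0$. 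Deleting this term and repeating on $\big((k_{n-1}+1)h,(k_{n-1}+2)h\big)$ gives $c_{n-1}=0$, and so on down to $c_1=0$; at each stage one checks that, once the higher coefficients have been eliminated, the chosen interval is influenced by exactly one surviving increment because the supports of the remaining increments lie strictly to its left. Thus all $c_i=0$, the increments are linearly independent, and $\Sigma_{k_1,\ldots,k_n}$ has full rank $n$.

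I expect the main obstacle to be the bookkeeping of overlapping supports in the inductive step — ensuring that at each stage exactly one surviving increment contributes on the selected interval — together with a clean justification of the stochastic Fubini interchange; the kernel positivity, though essential, follows at once by inspection of the formula for $K_H$.
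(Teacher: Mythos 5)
Your proof is correct, but it takes a genuinely different route from the paper's. Both arguments hinge on the Volterra representation \eqref{WH}, yet the mechanisms differ. The paper argues by contradiction through adaptedness: if $\Delta X^h_{k_n}$ were a linear combination of the earlier increments, then its anticipating part $I_n=\int_{(k_n+1)h}^{(k_n+2)h}W^H_s\,ds$ would be $\mathcal{F}_{(k_n+1)h}$-measurable, which is impossible because $\bE\bigl(I_n\mid \mathcal{F}_{(k_n+1)h}\bigr)$, computed from \eqref{WH}, truncates the inner Wiener integral at $(k_n+1)h$ and therefore differs from $I_n$. You instead reduce nonsingularity to $L^2$-linear independence, push the whole linear combination through the Volterra kernel via stochastic Fubini to obtain a single Wiener integral $\int\psi\,dW$, and prove injectivity of $\phi\mapsto\psi$ by kernel positivity and downward induction along the grid; your support bookkeeping is sound, since for $i<j$ one has $(k_i+2)h\le(k_j+1)h$, so on $\bigl((k_j+1)h,(k_j+2)h\bigr)$ only the coefficient $c_j$ survives. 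Your approach costs more machinery (the Fubini justification, positivity of $K_H$ for all $H\in(0,1)$), but it buys completeness: it handles an arbitrary vanishing combination at once, whereas the paper's contradiction argument singles out the top increment and dismisses the degenerate case $\mathrm{rank}\,\Sigma_{k_1,\ldots,k_{n-1}}<n-1$ with only the remark that ``a similar argument could be applied.'' Moreover, the paper's final inequality $\bE\bigl(I_n\mid\mathcal{F}_{(k_n+1)h}\bigr)\neq I_n$ is asserted rather than proved; making it rigorous requires showing that $\int_{(k_n+1)h}^{(k_n+2)h}\int_{(k_n+1)h}^{s}K_H(s,u)\,dW_u\,ds$ has positive variance, which is exactly the kind of kernel non-degeneracy you verify explicitly. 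In that sense your argument can be read as a self-contained justification of the step the paper leaves implicit.
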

\begin{proof}
Let us prove this by contradiction. Assume that for some $0 \leq k_1 < \ldots < k_n$ $$\mathrm{rank}\ \Sigma_{k_1,\ldots,k_{n-1}} =  \mathrm{rank}\ \Sigma_{k_1,\ldots,k_n} = n - 1$$ (if $\mathrm{rank}\  \Sigma_{k_1,\ldots,k_{n-1}} < n - 1$, a similar argument could be applied). Then it would be possible to express $\Delta X_{k_n}^h$ as a linear combination of the other components, i.e. there would exist $a_1, \ldots, a_{n-1} \in \bR$ such that
\begin{equation}\label{equality1} \Delta X^h_{k_n} = \sum_{i=1}^{n-1} a_i \Delta X^h_{k_i}. \end{equation}
Note that \begin{equation}\label{equality2} \Delta X^h_{k_n} =\frac1h\left(I_n+\zeta_n\right),\end{equation}
where $I_n=\int_{(k_n+1)h}^{(k_n+2)h}W_s^Hds$ and $\zeta_n$ is $\mathcal{F}_{(k_n+1)h}$-adapted. Also, $\sum_{i=1}^{n-1} a_i \Delta X^h_{k_i}$ is $\mathcal{F}_{(k_{n-1}+2)h}\subset\mathcal{F}_{(k_n+1)h}$-adapted.  Taking into account this adaptedness and \eqref{equality1}-\eqref{equality2}, we conclude that $I_n$ is $\mathcal{F}_{(k_{n}+1)h}$-adapted, and therefore $\bE(I_n|\mathcal{F}_{(k_{n}+1)h})=I_n$. However,  according to representation \eqref{WH},
\[
\bE(I_n|\mathcal{F}_{(k_{n}+1)h})= \int_{(k_n+1)h}^{(k_n+2)h}\!\! \int_0^{(k_{n}+1)h}K_H(s,u)dW_uds\neq I_n=\int_{(k_n+1)h}^{(k_n+2)h} \!\!\int_0^{s}K_H(s,u)dW_uds,
\]
whence the proof follows.
\end{proof}
\begin{remark} In fact, we did not use in the proof stationarity of increments $\Delta X_k^h$. However, taking into  account stationarity and assuming that equality \eqref{equality1} holds, we get that for any $m\ge 0$ $$\Delta X^h_{k_n+m} = \sum_{i=1}^{n-1} a_i \Delta X^h_{k_i+m}.$$ Now, by the same reason, we can present all $\Delta X^h_{k_i+m}$ with $k_i+m\ge k_n$ via linear combination of the increments with smaller indices. By induction, it means that assumption \eqref{equality1} leads to the possibility to predict without error all the future increments $\Delta X_k^h$ for $k\ge k_n$ via increments until $k_{n-1}$. This observation is useful in studying stationary Gaussian sequences.    
\end{remark}

\section{Linear combination of two nifBms and its main properties}
\label{sec3}
In this section we study  the properties of linear combination of two independent  incremental processes constructed from two    nifBms.
\subsection{Stationarity of the increments of the linear combination of nifBms}\label{stationar}

Starting with the process \eqref{Xintmean}, put in it $t = kh$ and define the following  stochastic sequence:  
 \begin{align}\label{Mprocess}
    \widetilde X_{k}^h &= \frac{a}{h}\int_{kh}^{(k+1)h}W_s^{H_1}ds+\frac{b}{h}\int_{kh}^{(k+1)h}W_s^{H_2}ds \\ \nonumber
    &= a X_{kh}^{h,H_1} + b X_{kh}^{h,H_2},
    \qquad k\in \mathbb{N}_0,   \, h>0,
\end{align} 
 where $\mathbb{N}_0=\mathbb{N} \cup \left\{0 \right\}$. Here coefficients  $a$ and $b$ are two real numbers, and \(W^{H_1}, W^{H_2}\) are two independent fractional Brownian motions with Hurst parameters $H_i\in(0,1)$, $i=1,2$.
So, we are working with equidistant observations, because one of our goals is to    estimate the parameters $\{H_1, H_2, a, b\}$ of the model using  these observations.
\begin{remark} As usual, taking into account the symmetry of the Gaussian processes $W^{H_1}$  and $W^{H_2}$, we can assume that $a>0,\, b>0.$ Otherwise, in what follows we can just estimate $|a|$  and $|b|.$ 
    
\end{remark}

Of course, the sequence $\{ \widetilde X_{k}^h, \  k\in \mathbb{N}_0\}$
is Gaussian but not stationary. Again, as in Section \ref{sec2}, in order to construct statistical estimators of all parameters basing on  the ergodic theorem, we want to transform it into  a stationary Gaussian stochastic sequence.  This can be done in several ways, but the easiest way is to create an incremental process of the form  
 $\Delta \widetilde X_k^h:=\widetilde X_{k+1}^h-\widetilde X_{k}^h$. Thus, we have
\begin{align*}\label{Dprocess} \Delta \widetilde X_k^h&= a(X^{h,H_1}_{(k+1)h}-X^{h,H_1}_{kh})+b (X^{h,H_2}_{(k+1)h}-X^{h,H_2}_{kh}) \\&=\frac{a}{h}\int_{kh}^{(k+1)h}\bigg(W_{s+h}^{H_1}-W_s^{H_1}\bigg)ds+\frac{b}{h}\int_{kh}^{(k+1)h}\bigg(W_{s+h}^{H_2}-W_s^{H_2}\bigg)ds\\&=:a\Delta X_{k}^{h,H_1}+b\Delta X_{k}^{h,H_2}.
\end{align*}

\begin{proposition}\label{lemmastat}
The Gaussian sequence $\{\Delta \widetilde X_k^h, k\in \mathbb{N}_0\}$
is stationary, with $\bE[\Delta \widetilde X_k^h ] = 0$ for all $k \in \bN_0$ and  
\begin{equation}\label{cov_sumoftwo_increments}
    \bE\left[\Delta \widetilde X_k^h \Delta \widetilde X_{k+n}^h\right]=a^2 h^{2H_1}\gamma( H_1,n)+b^2  h^{2H_2}\gamma( H_2,n),
\end{equation} 
 for all $k,n \in \bN_0$. 
 \end{proposition}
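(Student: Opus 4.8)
The plan is to reduce the entire statement to Theorem~\ref{theoremfour} together with the independence of the two driving fractional Brownian motions. First I would record the three structural facts that make stationarity easy to verify: the sequence $\{\Delta \widetilde X_k^h\}$ is Gaussian, being a finite linear combination of the jointly Gaussian processes $W^{H_1}$ and $W^{H_2}$; it has mean zero, since each nifBm is a centered process and hence so is every increment; and for a centered Gaussian sequence, stationarity is equivalent to the statement that the autocovariance $\bE[\Delta \widetilde X_k^h \,\Delta \widetilde X_{k+n}^h]$ depends on $n$ but not on $k$. So the whole proof comes down to computing this autocovariance and checking that $k$ drops out.

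Next I would expand the autocovariance using bilinearity of the covariance and the decomposition $\Delta \widetilde X_k^h = a\,\Delta X_{k}^{h,H_1}+b\,\Delta X_{k}^{h,H_2}$. This produces four terms: two ``diagonal'' terms proportional to $a^2$ and $b^2$, and two ``cross'' terms of the form $ab\,\bE[\Delta X_{k}^{h,H_1}\,\Delta X_{k+n}^{h,H_2}]$ (and its symmetric partner). The key step is that the cross terms vanish: since $W^{H_1}$ and $W^{H_2}$ are independent, any increment built from the first is independent of any increment built from the second, so the corresponding mixed expectations are zero. This is the one place where the independence hypothesis is essential, and I would state it explicitly.

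For the two surviving diagonal terms I would invoke \eqref{cov-deltaX} (equivalently, Theorem~\ref{theoremfour}), which gives $\bE[\Delta X_{k}^{h,H_i}\,\Delta X_{k+n}^{h,H_i}] = h^{2H_i}\gamma(H_i,n)$ for each $i=1,2$ and \emph{every} $k\in\bN_0$. Substituting yields
\[
\bE\!\left[\Delta \widetilde X_k^h \,\Delta \widetilde X_{k+n}^h\right]
= a^2 h^{2H_1}\gamma(H_1,n)+b^2 h^{2H_2}\gamma(H_2,n),
\]
which is exactly \eqref{cov_sumoftwo_increments} and, crucially, is independent of $k$. Combined with the centered Gaussian structure noted at the outset, this establishes stationarity and completes the proof.

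I do not anticipate a genuine obstacle here: the substance has already been done in Theorem~\ref{theoremfour}, and the only conceptual ingredient added is the vanishing of the cross terms via independence. The main point to be careful about is purely formal — making sure that ``stationarity'' is invoked in its correct form for Gaussian sequences (determined by mean and autocovariance), so that $k$-independence of the covariance is genuinely sufficient rather than merely necessary.
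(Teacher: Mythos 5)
Your proposal is correct and follows essentially the same route as the paper's own proof: reduce stationarity of the centered Gaussian sequence to $k$-independence of the autocovariance, kill the cross terms by independence of $W^{H_1}$ and $W^{H_2}$, and evaluate the diagonal terms via Theorem~\ref{theoremfour} with $t=kh$. Your explicit remark that the cross terms appear and vanish is just a slightly more detailed write-up of the paper's appeal to ``independence and zero mean.''
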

\begin{proof}
Since   $\{\Delta \widetilde X_k^h, k\in \mathbb{N}_0\}$ is Gaussian, and $\bE[\Delta \widetilde X_k^h ] = 0$ for all $k \in \bN_0$, it is sufficient to prove that its    covariance  $\bE\left[\Delta \widetilde X_k^h \Delta \widetilde X_{k+n}^h\right]$ does not depend on $k$. To prove this, we take into account independence and zero mean of $\Delta X_{k}^{h,H_1}$ and $\Delta X_{k}^{h,H_2}$, and state that 

\begin{equation*} 
    \bE\left[\Delta \widetilde X_k^h \Delta \widetilde X_{k+n}^h\right]=a^2 \bE\left[\Delta  X_k^{h,H_1}\Delta  X_{k+n}^{h,H_1}\right]   +b^2 \bE\left[\Delta  X_k^{h,H_2}\Delta  X_{k+n}^{h,H_2}\right].  
\end{equation*}
Now, in order to finish the proof, it is sufficient to note, that, in terms of  \eqref{covfin}, $$\bE\left[\Delta  X_k^{h,H}\Delta  X_{k+n}^{h,H}\right]= \bE[(X_{t+h}^h- X_t^h)(X_{t+(n+1)h}^h- X_{t+nh}^h)],$$ with $t=kh$, and apply Theorem \ref{theoremfour}.
\end{proof}

\subsection{Asymptotic behavior of the covariance}

To assess whether we can use the ergodic theorems \cite[Chapter 2]{Gikhman04} for Gaussian sequences, we need  the following property:   the covariance function \eqref{cov_sumoftwo_increments}  tends to zero  as $n\to \infty$. It is sufficient to establish that for any $H\in(0,1)$,  $\gamma (H, n) $ tends to zero as $n\to \infty$. It is the subject of the next lemma.

\begin{lemma}\label{gammas}
For any $H\in(0,1)$ the sequence $\gamma (H, n), n\in \bN_0 $ defined by equality \eqref{gammy}, is a asymptotically negligible, i.e., tends to zero as $n\to \infty$. More precisely,
\[
\frac{n^{2-2H}\gamma (H, n)}{2H(2H - 1) }\to  1,
\]
as $n\to\infty$.
 \end{lemma}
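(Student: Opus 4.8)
The plan is to treat the numerator of $\gamma(H,n)$ in \eqref{gammy} as the fourth finite difference of the function $f(x) = x^{2H+2}$ and to extract its leading-order asymptotics via Taylor expansion. For $n \geq 2$ all the arguments $n-2, n-1, n, n+1, n+2$ are positive, so the absolute values may be dropped and the numerator equals
\[
f(n-2) - 4f(n-1) + 6f(n) - 4f(n+1) + f(n+2),
\]
which is precisely the symmetric fourth central difference $(\Delta_1^4 f)(n)$ of $f$ with unit step. The key idea is that for a smooth function the fourth central difference behaves like the fourth derivative: $(\Delta_1^4 f)(n) = f^{(4)}(\xi_n)$ for some $\xi_n \in (n-2, n+2)$, or equivalently $f^{(4)}(n) + O(f^{(6)}(n))$ by the Taylor expansion of the central difference operator. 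Since $f^{(4)}(x) = (2H+2)(2H+1)(2H)(2H-1)\, x^{2H-2}$, this immediately suggests that the numerator is asymptotically $(2H+2)(2H+1)(2H)(2H-1)\, n^{2H-2}$ as $n \to \infty$.

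First I would write $f^{(4)}(x) = (2H+2)(2H+1)(2H)(2H-1) x^{2H-2}$ and note that $f^{(6)}(x)$ is of order $x^{2H-4}$, so that the correction term in the central-difference Taylor expansion is negligible relative to the leading term. Concretely, one expands each of $f(n\pm 1)$ and $f(n\pm 2)$ around $n$ using Taylor's theorem with remainder; the zeroth, first, second, and third derivative contributions cancel by the binomial-coefficient structure (the central fourth difference annihilates cubic polynomials), leaving $(\Delta_1^4 f)(n) = f^{(4)}(n) + O(n^{2H-4})$. Dividing by the denominator $4(2H+1)(H+1) = 2(2H+1)(2H+2)$ then gives
\[
\gamma(H,n) = \frac{(2H+2)(2H+1)(2H)(2H-1)}{2(2H+1)(2H+2)}\, n^{2H-2} + O(n^{2H-4}) = H(2H-1)\, n^{2H-2} + O(n^{2H-4}),
\]
and multiplying by $n^{2-2H}/(2H(2H-1))$ yields the claimed limit $1$. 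In particular $\gamma(H,n) \to 0$ since $2H-2 < 0$ for $H \in (0,1)$.

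The main obstacle is making the control of the remainder fully rigorous and uniform enough to justify the precise constant, rather than merely the qualitative decay. The cleanest route is to invoke the mean-value form $(\Delta_1^4 f)(n) = f^{(4)}(\xi_n)$ with $\xi_n \in (n-2, n+2)$, valid since $f \in C^4$ on $(0,\infty)$; then $n^{2-2H} f^{(4)}(\xi_n) = (2H+2)(2H+1)(2H)(2H-1)\, (\xi_n/n)^{2H-2}$, and $\xi_n/n \to 1$ because $\xi_n = n + O(1)$. This avoids bounding the sixth derivative explicitly and sidesteps any delicate cancellation bookkeeping. One should note that the constant $2H-1$ vanishes at $H = 1/2$; this is the degenerate case already identified in Lemma~\ref{H0}$(ii)$, where $\gamma(H,n) = 0$ identically for $n \geq 2$, so the stated normalized limit is understood for $H \neq 1/2$, while asymptotic negligibility holds for all $H \in (0,1)$ trivially in that case.
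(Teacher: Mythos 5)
Your reduction of the numerator to the fourth finite difference $\Delta_1^4 f(n-2)=f(n-2)-4f(n-1)+6f(n)-4f(n+1)+f(n+2)$ of $f(x)=x^{2H+2}$, handled via the mean-value form $f^{(4)}(\xi_n)$ with $\xi_n\in(n-2,n+2)$, is sound, and it is essentially the paper's argument in different clothing: the paper factors out $n^{2H+2}$ and Taylor-expands $g(x)=(1-2x)^{2H+2}-4(1-x)^{2H+2}+6-4(1+x)^{2H+2}+(1+2x)^{2H+2}$ at $x=0$, obtaining $g(x)=(2H+2)(2H+1)(2H)(2H-1)x^4+o(x^4)$, which is the same fourth-order cancellation; your mean-value packaging just avoids the explicit remainder bookkeeping. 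Your observation about $H=\tfrac12$ (where $\gamma(H,n)=0$ for $n\ge 2$, so the normalized limit is vacuous) is also a correct point that the paper passes over.

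The genuine problem is your last step, which contradicts your own display. From
\[
\gamma(H,n)=H(2H-1)\,n^{2H-2}+O\!\left(n^{2H-4}\right)
\]
you get
\[
\frac{n^{2-2H}\gamma(H,n)}{2H(2H-1)}\to\frac{H(2H-1)}{2H(2H-1)}=\frac12,
\]
not $1$. The constant in your asymptotics is right: $f^{(4)}(n)=(2H+2)(2H+1)(2H)(2H-1)n^{2H-2}$ divided by the denominator $2(2H+1)(2H+2)$ gives exactly $H(2H-1)$. So what your argument actually proves is $\gamma(H,n)\sim H(2H-1)n^{2H-2}$, i.e.\ the stated ratio tends to $\tfrac12$; equivalently, the lemma would hold with normalization $H(2H-1)$ in place of $2H(2H-1)$. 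You should be aware that the paper's own proof contains the same slip: after deriving the coefficient $(2H+2)(2H+1)(2H)(2H-1)$, it asserts that $\gamma(H,n)$ ``behaves as $2H(2H-1)n^{2H-2}$'', which is off by the same factor of $2$. The corrected constant $H(2H-1)$ is the one consistent with the remark following the lemma, where the incremental covariance is shown to behave like $H(2H-1)h^{2H}n^{2H-2}$, matching the classical lag-$n$ autocovariance of fBm increments. So rather than asserting the claimed limit $1$, you should either state the limit as $\tfrac12$ under the paper's normalization, or flag explicitly that the constant in the lemma needs the factor-of-two correction; as written, your final sentence of the computation does not follow from the line preceding it.
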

\begin{proof}
Let us rewrite $\gamma (H, n)$   for $n \geq 2$  by taking out the multiplier  $n^{2H+2}$:
\[
\gamma (H,n)=\frac{n^{2H+2}}{2(2H+1)(2H+2)}g(n^{-1}),
\]
where
\[
g(x)=\left(\!\left(1-2x\right)^{2H+2}\!-4\left(1-x\right)^{2H+2}+6\!-4\left(1+x\right)^{2H+2}+\!\left(1+2x\right)^{2H+2}\right).
\]
The asymptotic behavior of \(\gamma (H,n)\) can be deduced  from the Taylor expansion of $g(x)$ at the origin. Indeed, let us compute the subsequent derivatives of $g(x)$:
\begin{gather*}
g'(x) = (2H+2) \left( -2(1 - 2x)^{2H+1} + 4(1 - x)^{2H+1} - 4(1 + x)^{2H+1} + 2(1 + 2x)^{2H+1} \right),
\\
g''(x) = (2H+2)(2H+1) \left( 4(1 - 2x)^{2H} - 4(1 - x)^{2H} - 4(1 + x)^{2H} + 4(1 + 2x)^{2H} \right),
\\
\begin{aligned}
g'''(x) &= (2H+2)(2H+1)(2H) 
\\
&\qquad\times\left( -8(1 - 2x)^{2H-1} + 4 (1 - x)^{2H-1} - 4 (1 + x)^{2H-1} +8 (1 + 2x)^{2H-1} \right),
\end{aligned}
\\
\begin{aligned}
g^{(iv)}(x) &= (2H+2)(2H+1)(2H)(2H-1)
\\
&\qquad\times\left( 16(1 - 2x)^{2H-2} - 4(1 - x)^{2H-2} - 4(1 + x)^{2H-2} +16 (1 + 2x)^{2H-2} \right).
\end{aligned}
\end{gather*}
Observe that $g(0)=g'(0)=g"(0)=g'''(0)=0$, and the first  
non-zero term in the Taylor expansion of $g$ at zero is the fourth derivative.
\[
g^{(iv)}(0) = 24(2H+2)(2H+1)(2H)(2H-1) 
\]
Then we get from the Taylor expansion that 
\[
g(x)=(2H+2)(2H+1)(2H)(2H-1)x^4 +o(x^4).
\]
Hence, as $n\to\infty,$   $\gamma (H, n)$  behaves as $2H(2H-1)n^{2H-2}$, which implies that  
\[
\frac{ n^{2 - 2H}\gamma (H, n)}{2H(2H - 1)}\to      1, \,n\to \infty.\qedhere
\]
\end{proof}
\begin{remark}
    
It is interesting to note that the covariance of the increments of the process $\{X_{kh}^h,k\ge 1\}$ exhibits the same asymptotic behavior as that of a fractional Brownian motion. Specifically, when $H>\frac{1}{2}$,  the function $\gamma^H(n)$ is strictly positive   and decays slowly, whereas for $H<\frac{1}{2}$,  $\gamma^H(n)$ is strictly negative and exhibits a faster decay. This can be easily explained if for $n\ge 2$ we transform the incremental covariance as follows: \begin{multline*}
     \frac{1}{h^2}\bE\int_{0}^{h}\big(W_{s+h}^{H}-W_s^{H}\big)ds     \int_{nh}^{(n+1)h}\big(W_{t+h}^{H}-W_t^{H}\big)dt\\= \frac{1}{h^2}\int_{0}^{h}\!\!\int_{0}^{h}\bE\big(W_{s+h}^{H}-W_s^{H}\big)       \big(W_{t+(n+1)h}^{H}-W_{t+nh}^{H}\big)dsdt,
     \end{multline*}
\looseness=1 and recall that the increment of fBm are positively (negatively) correlated if $H>1/2$ ($H<1/2$). Moreover, applying the expansion of  $(1+x)^{2H}$, we get that  
\begin{align*}
\MoveEqLeft \bE\big(W_{s+h}^{H}-W_s^{H}\big)       \big(W_{t+(n+1)h}^{H}-W_{t+nh}^{H}\big)\\
&=\frac12\bigg[(t-s+(n+1)h)^{2H}+(t-s+(n-1)h)^{2H}-2(t-s+nh)^{2H} \bigg]\\
&=\frac{(nh)^{2H}}{2}
 \bigg[\bigg(1+\frac{t-s+h}{nh}\bigg)^{2H}+\bigg(1+\frac{t-s-h}{nh}\bigg)^{2H}-2\bigg(1+\frac{t-s}{nh}\bigg)^{2H}  \bigg]\\
&= \frac{H(2H-1)(nh)^{2H}}{2}
 \bigg[ \bigg(\frac{t-s+h}{nh}\bigg)^{2}+\bigg( \frac{t-s-h}{nh}\bigg)^{2 }-2\bigg( \frac{t-s}{nh}\bigg)^{2 }   + o(n^{-2})  \bigg]\\
&\sim H(2H-1)h^{2H} n^{2H-2},
 \end{align*}
and we can apply the Lebesgue dominated convergence theorem in order to get the same asymptotic behavior   of standard and integrated fBm.
Figures \ref{H03} and \ref{H07} illustrate the comparison  of the incremental covariance of the   integrated  and standard  fractional Brownian motions for $H=0.3$ and $H=0.7$, respectively.  One more observation: it follows from Lemma \ref{H0}, (ii),  that for non-overlapping intervals and for $H=\frac12$ the respective increments are independent. Therefore, in this case $g_{1/2}(\frac1n)=0, n\ge 2$, where \[
g_{1/2}(x)=\left(\!\!\left(1-2x\right)^3\!-4\left(1-x\right)^{3}+6\!-4\left(1+x\right)^{3}+\!\left(1+2x\right)^{3}\!\right).
\]
Moreover, equality $g_{1/2}(x)=0$  can be easily checked for any $x\le\frac12$.  \end{remark}

\begin{figure}[ht]
    \centering
    \begin{minipage}{0.47\textwidth}\label{fig1}
        \centering
        \includegraphics[width=\textwidth]{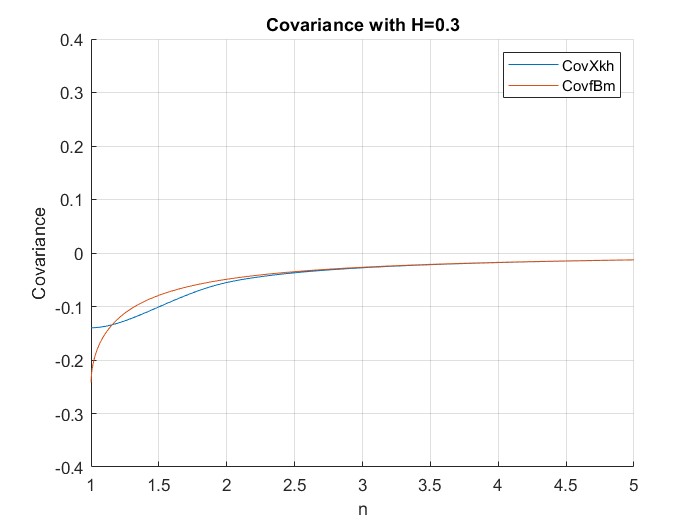}
        \caption{Comparison of the covariance of the increments in our process $\Delta X^h_k$ and the fractional Brownian motion with $H=0.3$.}
        \label{H03}
    \end{minipage}
    \hfill
    \begin{minipage}{0.47\textwidth}\label{fig2}
        \centering
        \includegraphics[width=\textwidth]{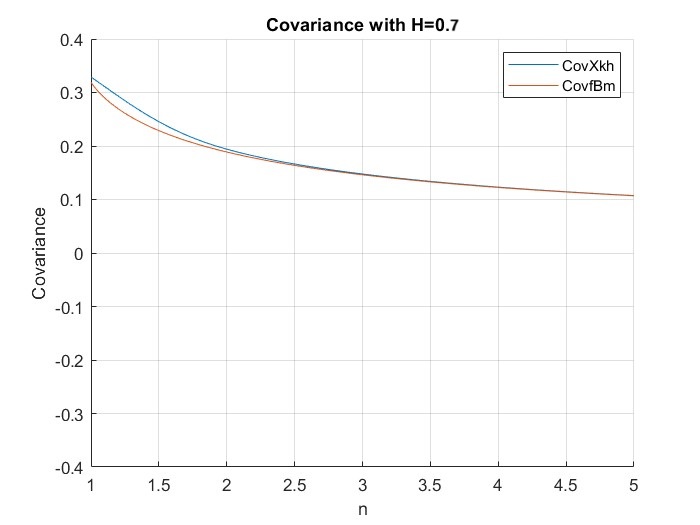}
        \caption{Comparison between the covariance of the increments in our process $\Delta X^h_k$ and the fractional Brownian motion with $H=0.7$.}
        \label{H07}
    \end{minipage}
\end{figure}

\subsection{Ergodic theorem for the normalized integral process}

Taking into account that the normalized integral process is Gaussian, Proposition  \ref{lemmastat} that guarantees its stationarity, Lemma \ref{gammas} that guarantees that the increments are asymptotically uncorrelated, and Theorem 5, Section  8   \cite{Gikhman04}, we can formulate the following theorem.
\begin{theorem}\label{ergodic} 
The Gaussian sequence $\Delta \widetilde X_k^h:=\widetilde X_{ k+1 }^h-\widetilde X_{k}^h, k \in \bN_0$ is ergodic, i.e., for any function $Q\colon\bR\to \bR$ such that $ \bE \left[Q(\Delta \widetilde X_0^h)\right]<\infty$, we have that 
$$\frac1N\sum_{k=0}^{N-1} Q(\Delta \widetilde X_k^h)\to \bE \left[Q(\Delta \widetilde X_0^h)\right] $$  a.s., as $N\to\infty$. 
    
\end{theorem}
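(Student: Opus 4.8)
The plan is to verify that the stationary Gaussian sequence $\Delta \widetilde X^h$ satisfies the hypotheses of the ergodic theorem for Gaussian sequences quoted from \cite[Section~8, Theorem~5]{Gikhman04} and then simply read off the conclusion. By Proposition~\ref{lemmastat} the sequence $\{\Delta \widetilde X_k^h,\, k\in\bN_0\}$ is already known to be centered, Gaussian, and stationary, so the only thing left to check is the decay hypothesis on its covariance function. The classical fact I intend to exploit is that a centered stationary Gaussian sequence whose covariance tends to zero has a continuous (atomless) spectral measure, is therefore mixing — hence ergodic — and thus obeys Birkhoff's pointwise ergodic theorem; this is precisely the content of the cited result.

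First I would record the covariance. By Proposition~\ref{lemmastat},
$$r(n):=\bE\left[\Delta \widetilde X_0^h \Delta \widetilde X_n^h\right]=a^2 h^{2H_1}\gamma(H_1,n)+b^2 h^{2H_2}\gamma(H_2,n).$$
Next I would invoke Lemma~\ref{gammas}: for each $i\in\{1,2\}$ one has $n^{2-2H_i}\gamma(H_i,n)\to 2H_i(2H_i-1)$, and since $H_i\in(0,1)$ forces the exponent $2H_i-2<0$, it follows that $\gamma(H_i,n)\to 0$. Consequently $r(n)\to 0$ as $n\to\infty$. In particular the Cesàro averages $\frac1N\sum_{n=0}^{N-1} r(n)$ also vanish in the limit, which is the precise condition equivalent to atomlessness of the spectral measure, and hence to ergodicity of the shift.

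With the covariance decay in hand, I would view the sequence as generated by the measure-preserving shift $\theta$ on the path space of $\Delta \widetilde X^h$, which by the previous step is ergodic. Writing $\Delta \widetilde X_k^h = \Delta \widetilde X_0^h \circ \theta^{k}$ and applying Birkhoff's ergodic theorem to the observable $\omega\mapsto Q(\Delta \widetilde X_0^h(\omega))$, which lies in $L^1$ under the hypothesis $\bE\big|Q(\Delta \widetilde X_0^h)\big|<\infty$, yields
$$\frac1N\sum_{k=0}^{N-1} Q(\Delta \widetilde X_k^h)\to \bE\left[Q(\Delta \widetilde X_0^h)\right]$$
almost surely as $N\to\infty$, because ergodicity of $\theta$ collapses the limiting conditional expectation onto the invariant $\sigma$-field to the ordinary expectation.

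The proof is essentially a verification of hypotheses, so there is no genuinely hard analytic obstacle remaining — the substantive work was already carried out in Lemma~\ref{gammas}. The only point deserving care is the logical passage from ``the covariance tends to zero'' to ``the shift is ergodic (indeed mixing)'': this relies crucially on the Gaussian structure, since for non-Gaussian stationary sequences decay of the covariance carries no information about ergodicity, and it is exactly this implication that the cited theorem supplies. A secondary point worth flagging is that the integrability hypothesis should be read as $\bE|Q(\Delta \widetilde X_0^h)|<\infty$, the standard $L^1$ assumption under which Birkhoff's theorem guarantees almost sure convergence.
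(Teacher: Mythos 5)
Your proposal is correct and follows essentially the same route as the paper: the paper also establishes ergodicity by combining Proposition~\ref{lemmastat} (stationarity of the centered Gaussian sequence), Lemma~\ref{gammas} (decay of the covariance, since $\gamma(H_i,n)\sim 2H_i(2H_i-1)n^{2H_i-2}\to 0$), and the Gaussian ergodic theorem of \cite[Section~8, Theorem~5]{Gikhman04}. The extra detail you supply (spectral measure, mixing, Birkhoff) is just an unpacking of what that cited theorem delivers, so there is no substantive difference between the two arguments.
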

\begin{remark} Of course, the same is true for the sequence  $\Delta  X_k^h:=  X_{(k+1)h }^h-  X_{kh}^h$, $k \in \bN_0$ with any Hurst index $H$. 
\end{remark}

\section{Mixed nifBm with drift: model description and drift parameter estimation}
\label{sec4}

Starting from this section, we focus on the statistical inference for a process defined as a linear combination of two independent nifBms with a nonlinear drift component. As a special case, we also consider a related model driven by a single nifBm. This section is devoted to the estimation of the drift parameter, while the estimation of the noise parameters will be addressed in Sections~\ref{constr}--\ref{sec:asnorm}.

  \subsection{Drift parameter estimation in the model with drift and two nifBms}
Let us begin by describing the statistical model. Fix $h > 0$, and let $X^{h,H_i} = \{X_t^{h,H_i},\, t \geq 0\}$, $i = 1,2$, denote two independent nifBms with Hurst parameters $H_1, H_2 \in (0,1)$, as defined in \eqref{Xintmean}. We define the mixed nifBm with drift $Y = \{Y_t,\, t \geq 0\}$ by
\begin{equation}\label{driftTwoProc}
Y_t = \mu G(t) + a X^{h,H_1}_t + b X^{h,H_2}_t,
\end{equation}
where $\mu, a, b \in \mathbb{R}$ are constants, and $G$ is a known deterministic function.  

Suppose that the process $Y$ is observed at discrete time points $t_k = k h$, for $k = 0, \dots, N$. The corresponding observations ${Y_1, \dots, Y_N}$ are given by
\begin{equation}\label{discr-model}
Y_k^h = \mu G(k h) + a X^{h,H_1}_{k h} + b X^{h,H_2}_{k h}, \quad k = 0, \dots, N.
\end{equation}
Denoting $G_k^h \coloneqq G(kh)$ and using the notation \eqref{Mprocess}, we can rewrite \eqref{discr-model} as follows:
\begin{equation}\label{discr-model1}
Y_k^h = \mu G_k^h + \widetilde X_k^h, \quad k = 0, \dots, N.
\end{equation}

In what follows, we assume that $G_0^h = 0$ and $G_k^h \ne 0$ at least for one $k$. Also, in this section, we assume that $H_1, H_2 \in (0,1)$ and the coefficients $a, b \in \mathbb{R} \setminus \{0\}$ are known. The goal is to estimate the drift parameter $\mu \in \mathbb{R}$ based on the discrete observations. Estimation of the remaining parameters will be discussed in Section~\ref{constr}.

Define the vector of increments
$\Delta Y^{(N)} = \left(\Delta Y_0, \dots, \Delta Y_{N-1}\right)^\top$,
where, for $k = 0, \dots, N-1$,
\[
\Delta Y_k = Y_{(k+1) h} - Y_{k h} \\
= \mu \Delta G^h_k + \Delta \widetilde X^h_k,
\]
and
$\Delta G^h_k = G^h_{k+1} - G^h_k$, 
$\Delta\widetilde X^h_k = \widetilde X^h_{k+1} - \widetilde X^h_k$.

Clearly, the vector $\Delta Y^{(N)}$ is Gaussian with distribution $\mathcal{N}(\mu \Delta G^{(N)}, \widetilde\Sigma^{(N)})$, where
$\Delta G^{(N)} = \left(\Delta G^h_0, \dots, \Delta G^h_{N-1}\right)^\top$,
and $\widetilde\Sigma^{(N)}$ is the covariance matrix of the Gaussian vector
\[
\Delta \widetilde{X}^{(N)} \coloneqq \left( \Delta \widetilde X^h_0, \dots, \Delta \widetilde X^h_{N-1} \right)^\top.
\]
Due to the independence of $X^{h,H_1}$ and $X^{h,H_2}$, the covariance matrix $\widetilde\Sigma^{(N)}$ can be  decomposed as
\[
\widetilde\Sigma^{(N)} = a^2 \Sigma^{(N)}_{H_1} + b^2 \Sigma^{(N)}_{H_2},
\]
where $\Sigma^{(N)}_{H_i}$ is the covariance matrix of $(\Delta X^{h,H_i}_0, \dots, \Delta X^{h,H_i}_{N-1})^\top$, $i = 1,2$. By Proposition~\ref{prop:covmatrnonsing}, both $\Sigma^{(N)}_{H_1}$ and $\Sigma^{(N)}_{H_2}$ are positive definite, and hence $\Sigma^{(N)}$ is positive definite and non-singular whenever $a^2 + b^2 > 0$.

Following the methodology of \cite{Mishura2018NonlinearDriftMLE}, the maximum likelihood estimator (MLE) of the drift parameter $\mu$ is given by
\begin{equation}\label{drift_est}
\widehat{\mu}_N = \frac{(\Delta G^{(N)})^\top \left(\widetilde\Sigma^{(N)}\right)^{-1} \Delta Y^{(N)}}{(\Delta G^{(N)})^\top \left(\widetilde\Sigma^{(N)}\right)^{-1} \Delta G^{(N)}}.
\end{equation}

\begin{proposition}\label{prop:drift-mle}
Let $a \ne 0$, $b \ne 0$, and $H_1, H_2 \in (0,1)$.
\begin{enumerate}
\item The MLE $\widehat{\mu}_N$ is unbiased and normally distributed with variance
\begin{equation}\label{variance_drift2}
\Var(\widehat{\mu}_N) = \frac{1}{(\Delta G^{(N)})^\top \left(\widetilde\Sigma^{(N)}\right)^{-1} \Delta G^{(N)}}.
\end{equation}
\item If
\begin{equation}\label{assumpMLEcons2}
\frac{N^{H_1 \vee H_2}}{G^h_N} \to 0 \quad \text{as } N \to \infty,
\end{equation}
then the estimator $\widehat{\mu}_N$ is consistent both in $L^2$ and almost surely.
\end{enumerate}
\end{proposition}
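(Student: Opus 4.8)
The plan for part (1) is pure Gaussian linear algebra. By \eqref{drift_est} the estimator $\widehat\mu_N$ is a fixed linear functional of the Gaussian vector $\Delta Y^{(N)}\sim\mathcal N(\mu\Delta G^{(N)},\widetilde\Sigma^{(N)})$, hence itself Gaussian. Writing $v^{(N)}:=(\widetilde\Sigma^{(N)})^{-1}\Delta G^{(N)}$ and $d_N:=(\Delta G^{(N)})^\top(\widetilde\Sigma^{(N)})^{-1}\Delta G^{(N)}$, one has $\widehat\mu_N=(v^{(N)})^\top\Delta Y^{(N)}/d_N$; taking expectations and using $\bE[\Delta Y^{(N)}]=\mu\Delta G^{(N)}$ gives $\bE[\widehat\mu_N]=\mu$, so the estimator is unbiased, while the one-line computation $\Var\big((v^{(N)})^\top\Delta Y^{(N)}\big)=(v^{(N)})^\top\widetilde\Sigma^{(N)}v^{(N)}=d_N$ yields \eqref{variance_drift2}.

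For part (2), both modes of consistency hinge on the single fact that $d_N\to\infty$. For the $L^2$ statement this is immediate once established, since unbiasedness gives $\bE[(\widehat\mu_N-\mu)^2]=\Var(\widehat\mu_N)=d_N^{-1}$. To prove $d_N\to\infty$ I would invoke the variational identity $x^\top A^{-1}x=\max_{y\ne0}(y^\top x)^2/(y^\top Ay)$, valid for any positive definite $A$ (legitimate here because $\widetilde\Sigma^{(N)}$ is positive definite by Proposition~\ref{prop:covmatrnonsing}), and test it against $y=\mathbf 1=(1,\dots,1)^\top$. Since $G_0^h=0$, telescoping gives $\mathbf 1^\top\Delta G^{(N)}=G_N^h$, whereas $\mathbf 1^\top\widetilde\Sigma^{(N)}\mathbf 1=\Var\big(\sum_{k=0}^{N-1}\Delta\widetilde X_k^h\big)=\Var(\widetilde X_N^h-\widetilde X_0^h)$. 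By Remark~\ref{rem:varX} the variance of each component $X_{Nh}^{h,H_i}$ grows like $(Nh)^{2H_i}$, so $\mathbf 1^\top\widetilde\Sigma^{(N)}\mathbf 1=O(N^{2(H_1\vee H_2)})$, and therefore
\[
d_N\ \ge\ \frac{(G_N^h)^2}{\mathbf 1^\top\widetilde\Sigma^{(N)}\mathbf 1}\ \ge\ C\Big(\frac{G_N^h}{N^{H_1\vee H_2}}\Big)^2,
\]
which diverges precisely under assumption \eqref{assumpMLEcons2}.

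For almost sure consistency the difficulty is that \eqref{assumpMLEcons2} provides no rate for $d_N\to\infty$, so a crude Borel--Cantelli bound on the Gaussian tails of $\widehat\mu_N-\mu$ need not be summable. The device I would use is the Cholesky (innovations) factorization $\widetilde\Sigma^{(N)}=L_NL_N^\top$ with $L_N$ lower triangular. Because the matrices $\widetilde\Sigma^{(N)}$ are nested (each is the leading principal block of the next), the factors $L_N$ and the standardized innovations $\eta=L_N^{-1}\Delta\widetilde X^{(N)}$ are consistent across $N$: the $\eta_k$ are i.i.d.\ $\mathcal N(0,1)$, and in $M_N:=d_N(\widehat\mu_N-\mu)=(L_N^{-1}\Delta G^{(N)})^\top\eta=\sum_{k=0}^{N-1}c_k\eta_k$ the coefficients $c_k$ do not depend on $N$, with $d_N=\sum_{k=0}^{N-1}c_k^2$. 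Thus $\widehat\mu_N-\mu$ is a normalized sum of independent centred variables; since $d_N\uparrow\infty$, a telescoping comparison with $\int x^{-2}\,dx$ shows $\sum_{k\ge0}c_k^2/d_{k+1}^2<\infty$, so Kolmogorov's theorem on series of independent terms gives a.s.\ convergence of $\sum_k c_k\eta_k/d_{k+1}$, and Kronecker's lemma then yields $M_N/d_N\to0$ a.s., i.e.\ $\widehat\mu_N\to\mu$ almost surely. I expect this reduction to independent innovations to be the main obstacle, as it is what replaces the unavailable rate by the exact strong-law mechanism; the remaining steps are routine bookkeeping.
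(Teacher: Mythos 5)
Your proposal is correct, and for part (2) it takes a genuinely different route from the paper. Part (1) coincides with the paper's argument: write $\widehat\mu_N=\mu+d_N^{-1}(\Delta G^{(N)})^\top(\widetilde\Sigma^{(N)})^{-1}\Delta\widetilde X^{(N)}$ and read off normality, unbiasedness, and the variance. For part (2), however, the paper proves nothing from scratch: it invokes Theorem~1 of \cite{Mishura2018twoest}, which asserts that $\Var(\widetilde X^h_N)/(G^h_N)^2\to 0$ is sufficient for both $L^2$ and strong consistency, and then verifies this condition via the growth estimate of Remark~\ref{rem:varX} --- the same estimate you use. Your proof replaces that citation by a self-contained argument: the Cauchy--Schwarz (variational) bound with test vector $\mathbf 1$, giving $d_N\ge (G^h_N)^2/\Var\bigl(\widetilde X^h_N-\widetilde X^h_0\bigr)$, is exactly the mechanism hidden behind the cited sufficient condition and settles the $L^2$ part; the nested-Cholesky innovation representation $d_N(\widehat\mu_N-\mu)=\sum_{k=0}^{N-1}c_k\eta_k$, with coefficients $c_k$ independent of $N$ and i.i.d.\ standard normal $\eta_k$, combined with the Khintchine--Kolmogorov convergence theorem and Kronecker's lemma, supplies the strong consistency that the paper outsources. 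What the paper's route buys is brevity; what yours buys is transparency and self-containedness (it essentially reconstructs the proof of the cited theorem), and it makes explicit why no convergence rate in \eqref{assumpMLEcons2} is needed. Two small points of hygiene: in the telescoping comparison with $\int x^{-2}\,dx$, discard indices with $c_k=0$ and start the series at the first index where $d_{k+1}>0$; and note that Proposition~\ref{prop:covmatrnonsing} itself concerns a single nifBm --- the positive definiteness of $\widetilde\Sigma^{(N)}=a^2\Sigma^{(N)}_{H_1}+b^2\Sigma^{(N)}_{H_2}$, which legitimizes both your variational identity and the Cholesky factorization, follows from it by the summation argument given in the paper just before the proposition you are proving.
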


\begin{proof}
1. Since the process $Y$ is Gaussian and the estimator $\widehat{\mu}_n$ is a linear functional of the observations, it is normally distributed. From the model representation
\[
\Delta Y^{(N)} = \mu \Delta G^{(N)} + \Delta \widetilde{X}^{(N)},
\]
it follows that
\[
\widehat{\mu}_N = \mu + \frac{\left(\Delta G^{(N)}\right)^\top \left(\widetilde\Sigma^{(N)}\right)^{-1} \Delta \widetilde{X}^{(N)}}{\left(\Delta G^{(N)}\right)^\top \left(\widetilde\Sigma^{(N)}\right)^{-1} \Delta G^{(N)}},
\]
which shows that $\widehat{\mu}_N$ is unbiased and has variance given by \eqref{variance_drift2}.

2. According to \cite[Theorem 1]{Mishura2018twoest}, a sufficient condition for both $L^2$-consistency and strong consistency of $\widehat{\mu}_N$ is that
\[
\frac{\Var\left(\widetilde X^h_N\right)}{\left(G^h_N\right)^2} \to 0 \quad \text{as } N \to \infty.
\]
This condition holds under assumption \eqref{assumpMLEcons2}, since
\[
\Var\left(\widetilde X^h_N\right) 
= \Var\left(a X^{h,H_1}_{Nh} + b X^{h,H_2}_{Nh}\right)
\sim a^2 (Nh)^{2H_1} + b^2 (Nh)^{2H_2}, \quad \text{as } N \to \infty,
\]
see Remark~\ref{rem:varX}.
\end{proof}

\begin{remark}\label{rem:observations}
In this section, we have assumed that the process $Y$ is observed on an equidistant grid $\{h, 2h, \dots, Nh\}$. However, the same results for the MLE of the drift parameter $\mu$ remain valid for any grid of the form $\{k_1 h, k_2 h, \dots, k_N h\}$, where $k_1 < k_2 < \dots < k_N$. The $L^2$- and strong consistency of the corresponding MLE holds under assumption \eqref{assumpMLEcons2}, provided that $k_N \to \infty$ as $N \to \infty$.

It is important to note that, in contrast, the ergodic-type estimators of $H_1$, $H_2$, $a$, and $b$, which will be introduced in Section~\ref{constr}, do rely crucially on the equidistant nature of the observation grid. Therefore, for consistency with the assumptions in the subsequent sections, we have chosen to adopt the equidistant observation scheme in the present section as well.
\end{remark}

Let us consider a simpler alternative estimator based on two observations only:
\begin{equation}\label{est-drift-last}
\widetilde\mu_N = \frac{Y^h_N - Y^h_0}{G^h_N}.
\end{equation}
Observe that this estimator can be interpreted as the MLE of $\mu$ based on two observations $Y^h_0$ and $Y^h_N$.  
Therefore, taking into account Remark~\ref{rem:observations} and Proposition~\ref{prop:drift-mle}, we immediately obtain the following result.

\begin{proposition}\label{prop:drift-last}
Let $a \ne 0$, $b \ne 0$, and $H_1, H_2 \in (0,1)$.
\begin{enumerate}
\item The estimator $\widetilde \mu_N$ is unbiased and normal with
\begin{equation}\label{var-altdrift}
\Var (\widetilde \mu_N) = \frac{\Var\left(\widetilde X^h_N - \widetilde X^h_0\right)}{\left(G^h_N\right)^2}.
\end{equation}
\item Under assumption \eqref{assumpMLEcons2}, the estimator $\widetilde{\mu}_N$ is consistent both in $L^2$ and almost surely.
\end{enumerate}
\end{proposition}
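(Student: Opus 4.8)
The plan is to reduce the statement to the single-observation-pair instance of the maximum likelihood framework already developed for $\widehat\mu_N$, exploiting the fact (noted just before the proposition) that $\widetilde\mu_N$ is exactly the MLE built from the two observations $Y^h_0$ and $Y^h_N$. Once this identification is made, Part~1 is a short direct computation and Part~2 follows from Remark~\ref{rem:observations} and Proposition~\ref{prop:drift-mle} with essentially no new work.

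First I would prove Part~1. Using the assumption $G^h_0 = 0$, I would write $Y^h_N - Y^h_0 = \mu G^h_N + (\widetilde X^h_N - \widetilde X^h_0)$, so that
\[
\widetilde\mu_N = \mu + \frac{\widetilde X^h_N - \widetilde X^h_0}{G^h_N}.
\]
Since $\widetilde X^h_N - \widetilde X^h_0$ is a centered Gaussian random variable (a linear functional of the jointly Gaussian process $Y$), the estimator $\widetilde\mu_N$ is Gaussian. Taking expectations and using $\bE[\widetilde X^h_N - \widetilde X^h_0] = 0$ yields unbiasedness, and the variance formula \eqref{var-altdrift} is immediate from the displayed identity.

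For Part~2, I would invoke the interpretation of $\widetilde\mu_N$ as the MLE on the two-point grid $\{0, Nh\}$, which fits the setting of Remark~\ref{rem:observations} with $k_1 = 0$, $k_2 = N$, and largest index $k_N = N \to \infty$. Then Proposition~\ref{prop:drift-mle}(2) together with Remark~\ref{rem:observations} gives both $L^2$- and strong consistency under assumption \eqref{assumpMLEcons2}. To make the $L^2$ part self-contained, I would alternatively bound directly $\Var(\widetilde X^h_N - \widetilde X^h_0) \le 2\Var(\widetilde X^h_N) + 2\Var(\widetilde X^h_0)$ and use the asymptotics $\Var(\widetilde X^h_N) \sim a^2 (Nh)^{2H_1} + b^2 (Nh)^{2H_2}$ recalled in the proof of Proposition~\ref{prop:drift-mle}, so that $\Var(\widetilde\mu_N) = O\bigl((N^{H_1 \vee H_2}/G^h_N)^2\bigr) \to 0$.

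The main obstacle is essentially nonexistent for the $L^2$ statement; the only point requiring care is the almost-sure claim, where a direct argument would need a Borel--Cantelli or monotone-subsequence estimate. This is cleanly avoided by the MLE interpretation, which routes the a.s.\ consistency through the cited Theorem~1 of \cite{Mishura2018twoest} exactly as in Proposition~\ref{prop:drift-mle}. The one technical check to confirm is that the two-point grid genuinely satisfies the hypotheses of that theorem, namely nonsingularity of the relevant covariance---here simply positivity of the scalar variance $\Var(\widetilde X^h_N - \widetilde X^h_0)$, which holds by Proposition~\ref{prop:covmatrnonsing}.
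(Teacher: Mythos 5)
Your proposal is correct and follows essentially the same route as the paper: the paper derives Proposition~\ref{prop:drift-last} precisely by observing that $\widetilde\mu_N$ is the MLE based on the two observations $Y^h_0$ and $Y^h_N$ and then invoking Remark~\ref{rem:observations} together with Proposition~\ref{prop:drift-mle}, which is exactly your Part~2. Your explicit decomposition $\widetilde\mu_N = \mu + (\widetilde X^h_N - \widetilde X^h_0)/G^h_N$ for Part~1 and the direct variance bound for $L^2$-consistency are sound additions that merely spell out what the paper leaves implicit.
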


\begin{remark}
\begin{enumerate}
    \item The MLE $\widehat\mu_N$ is more efficient than the estimator $\widetilde\mu_N$, in the sense that $\Var(\widehat\mu_N) \le \Var(\widetilde\mu_N)$. This result follows from \cite[Corollary~1]{Mishura2018NonlinearDriftMLE}, where the variances of MLEs based on samples of different sizes are compared.

    \item A key advantage of the estimator $\widetilde\mu_N$ is its independence from the noise parameters $H_1$, $H_2$, $a$, and $b$, which may be unknown in practice.

    \item Under the stronger assumption that $\frac{G^h_N}{N}\ge c>0, N\ge 1$,  the strong consistency of $\widetilde\mu_N$ can be derived from the ergodic Theorem~\ref{ergodic}. Indeed, this theorem implies that
    \[
    \frac{\widetilde X_N^h - \widetilde X^h_0}{N} = \frac{1}{N} \sum_{k=0}^{N-1} \Delta \widetilde X_k^h \to \mathbb{E}\left[\Delta \widetilde X_0^h\right] = 0
    \quad \text{a.s., as } N \to \infty,
    \]
    which yields, under the stated assumption, that 
    $\widetilde\mu_N = \mu + \frac{N}{G^h_N} \cdot \frac{\widetilde X_N^h - \widetilde X^h_0}{N}
    \to \mu$, a.s., as $N \to \infty$.
\end{enumerate}
\end{remark}

\subsection{Drift parameter estimation in the model with drift and one nifBm}
Let us consider a particular case of the model \eqref{discr-model} with $b = 0$. In this setting, the statistical model takes the form
\begin{equation}\label{driftOneProc}
Y_k^h = \mu G^h_k + a X^{h,H}_{kh},
\end{equation}
where $X^{h,H}$ is a nifBm with Hurst parameter $H \in (0,1)$, as defined in \eqref{Xintmean}.

For the process \eqref{driftOneProc}, the MLE of the drift parameter $\mu$ is given by
\begin{equation}\label{drift_est1}
\widehat{\mu}_N = \frac{\left(\Delta G^{(N)}\right)^\top \left(\Sigma^{(N)}_H\right)^{-1} \Delta Y^{(N)}}{\left(\Delta G^{(N)}\right)^\top \left(\Sigma^{(N)}_H\right)^{-1} \Delta G^{(N)}},
\end{equation}
where $\Sigma^{(N)}_H$ denotes the covariance matrix of the vector $(\Delta X^{h,H}_0, \dots, \Delta X^{h,H}_{N-1})^\top$.

The following result summarizes the properties of the estimator \eqref{drift_est1}. It follows directly from the results of \cite{Mishura2018NonlinearDriftMLE, Mishura2018twoest}, using the same approach as in Proposition~\ref{prop:drift-mle}. For this reason, we omit the proof.

\begin{proposition}
Let $a \ne 0$ and $H \in (0,1)$.
\begin{enumerate}
\item The MLE $\widehat{\mu}_N$, defined in \eqref{drift_est1}, is unbiased and normally distributed with variance
\begin{equation}\label{variance_drift}
\Var(\widehat \mu_N) = \frac{a^2}{(\Delta G^{(N)})^\top \left(\Sigma^{(N)}_H\right)^{-1}  \Delta G^{(N)}}.
\end{equation}

\item Suppose that
\begin{equation}\label{assumpMLEcons}
\frac{N^H}{G^h_N} \to 0 \quad \text{as } N \to \infty.
\end{equation}
Then the MLE $\widehat{\mu}_N$ is consistent both in the $L^2$-sense and almost surely.
\end{enumerate}
\end{proposition}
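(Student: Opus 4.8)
The plan is to treat this as the $b=0$ specialization of Proposition~\ref{prop:drift-mle} and to follow its proof line by line, keeping careful track of the scalar $a^2$ that now multiplies the noise covariance. First I would record that, under model \eqref{driftOneProc}, the increment vector $\Delta Y^{(N)}$ is Gaussian with mean $\mu\,\Delta G^{(N)}$ and covariance matrix $a^2\Sigma^{(N)}_H$, where $\Sigma^{(N)}_H$ is positive definite and nonsingular by Proposition~\ref{prop:covmatrnonsing}. Since $a^2$ is a positive scalar, it cancels between numerator and denominator of \eqref{drift_est1}, so the estimator built from $\Sigma^{(N)}_H$ coincides with the maximum likelihood estimator for the true covariance $a^2\Sigma^{(N)}_H$.

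For part 1, normality of $\widehat{\mu}_N$ follows because it is a linear functional of the Gaussian vector $\Delta Y^{(N)}$. Substituting $\Delta Y^{(N)} = \mu\,\Delta G^{(N)} + a\,\Delta X^{(N)}$ into \eqref{drift_est1}, where $\Delta X^{(N)} = (\Delta X^{h,H}_0,\dots,\Delta X^{h,H}_{N-1})^\top$ has zero mean and covariance $\Sigma^{(N)}_H$, yields
\[
\widehat{\mu}_N = \mu + \frac{(\Delta G^{(N)})^\top (\Sigma^{(N)}_H)^{-1}\, a\,\Delta X^{(N)}}{(\Delta G^{(N)})^\top (\Sigma^{(N)}_H)^{-1}\,\Delta G^{(N)}}.
\]
Taking expectations gives unbiasedness, and computing the variance of the resulting linear form — using $\Var(a\,\Delta X^{(N)}) = a^2\Sigma^{(N)}_H$ — produces exactly \eqref{variance_drift}, the factor $a^2$ surviving in the numerator because only one power of $(\Sigma^{(N)}_H)^{-1}$ remains after the quadratic simplification $(\Sigma^{(N)}_H)^{-1}\Sigma^{(N)}_H(\Sigma^{(N)}_H)^{-1} = (\Sigma^{(N)}_H)^{-1}$.

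For part 2, I would invoke the consistency criterion of \cite[Theorem 1]{Mishura2018twoest}, as in Proposition~\ref{prop:drift-mle}, according to which a sufficient condition for both $L^2$- and strong consistency is $\Var(a X^{h,H}_{Nh})/(G^h_N)^2 \to 0$ as $N\to\infty$. By Remark~\ref{rem:varX} one has $\Var(X^{h,H}_{Nh}) \sim (Nh)^{2H}$, hence $\Var(a X^{h,H}_{Nh}) \sim a^2 (Nh)^{2H}$, so the criterion reduces to $a^2 h^{2H} (N^{H}/G^h_N)^2 \to 0$, which holds precisely under assumption \eqref{assumpMLEcons}.

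There is no genuine analytic obstacle here: the argument is a direct transcription of the two-nifBm proof with $b$ set to zero. The only points requiring care are bookkeeping — ensuring the scalar $a^2$ is placed correctly so that the variance formula \eqref{variance_drift} carries $a^2$ in the numerator (rather than cancelling, as it does inside the estimator itself), and confirming that the single-process variance asymptotics of Remark~\ref{rem:varX} match the hypotheses of the cited consistency theorem. Accordingly, I would expect the entire proof to be short once these two checks are in place, which is presumably why the authors chose to omit it.
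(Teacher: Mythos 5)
Your proposal is correct and follows exactly the route the paper indicates: the paper omits this proof, stating only that it ``follows directly from the results of \cite{Mishura2018NonlinearDriftMLE, Mishura2018twoest}, using the same approach as in Proposition~\ref{prop:drift-mle}'', and your argument is precisely that specialization to $b=0$, with the correct bookkeeping of $a^2$ (cancelling inside the estimator, surviving in the variance via $(\Sigma^{(N)}_H)^{-1}\Sigma^{(N)}_H(\Sigma^{(N)}_H)^{-1}=(\Sigma^{(N)}_H)^{-1}$) and the correct reduction of the consistency criterion to assumption \eqref{assumpMLEcons} via Remark~\ref{rem:varX}.
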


The following result gives the properties of an alternative estimator in the model with drift and one nifBm.
\begin{proposition}\label{prop:drift-last-oneproc}
Let $a \ne 0$ and $H \in (0,1)$.
\begin{enumerate}
\item The estimator 
\begin{equation}\label{drift-last1}
\widetilde{\mu}_N = (Y^h_N - Y^h_0)/G^h_N
\end{equation}
is unbiased and normally distributed with variance
\[
\Var(\widetilde\mu_N) = a^2\bE [(X_{Nh}^{h,H} - X_{0}^{h,H})^2] / (G^h_N)^2.
\]

\item Under assumption \eqref{assumpMLEcons}, then the estimator $\widetilde{\mu}_N$ is consistent both in the $L^2$-sense and almost surely.
\end{enumerate}
\end{proposition}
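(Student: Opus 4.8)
The plan is to reduce everything to an explicit affine representation of $\widetilde\mu_N$ in terms of a single centered Gaussian increment, after which both claims follow by inspection. First I would substitute the model \eqref{driftOneProc} into the definition \eqref{drift-last1}. Using the standing assumption $G_0^h = 0$, the drift term at time $0$ vanishes and the contributions $\mu G_N^h$ cancel against the denominator, leaving
\[
\widetilde\mu_N = \mu + \frac{a\,(X_{Nh}^{h,H} - X_0^{h,H})}{G^h_N}.
\]
Since $X^{h,H}$ is a centered Gaussian process, the increment $X_{Nh}^{h,H} - X_0^{h,H}$ is a centered Gaussian random variable, so $\widetilde\mu_N$ is an affine function of a Gaussian and is therefore itself normally distributed. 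Taking expectations gives $\bE[\widetilde\mu_N] = \mu$, i.e.\ unbiasedness, and computing the second moment of the centered part yields the stated variance $\Var(\widetilde\mu_N) = a^2\,\bE[(X_{Nh}^{h,H} - X_0^{h,H})^2]/(G^h_N)^2$.

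For part 2, I would observe that this estimator is exactly the $b = 0$ specialization of the two-process estimator \eqref{est-drift-last}, so both consistency claims follow from the argument already used for Proposition~\ref{prop:drift-last}. Concretely, for $L^2$-consistency it suffices that $\Var(\widetilde\mu_N) \to 0$. I would bound the numerator via the elementary inequality $\bE[(X_{Nh}^{h,H} - X_0^{h,H})^2] \le 2\bE[(X_{Nh}^{h,H})^2] + 2\bE[(X_0^{h,H})^2]$ and invoke Remark~\ref{rem:varX}, which gives $\bE[(X_{Nh}^{h,H})^2] \sim (Nh)^{2H}$; since $\bE[(X_0^{h,H})^2]$ is a fixed finite constant, the numerator grows like $(Nh)^{2H}$. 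Hence $\Var(\widetilde\mu_N)$ is of order $a^2 h^{2H}(N^H/G^h_N)^2$, which tends to $0$ under assumption \eqref{assumpMLEcons}. For almost sure consistency I would appeal to \cite[Theorem~1]{Mishura2018twoest}, whose sufficient condition $\Var(X_{Nh}^{h,H})/(G^h_N)^2 \to 0$ is verified by the same variance asymptotics.

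The computation contains no serious obstacle: the whole proof is a one-line substitution combined with the variance asymptotics already recorded in Remark~\ref{rem:varX}, which is precisely why the statement can be said to follow directly from the earlier results. The only point requiring a moment's care is confirming that the contribution of the fixed boundary term $X_0^{h,H}$ is asymptotically negligible relative to the growing variance at time $Nh$; this is immediate because $\Var(X_0^{h,H})$ does not depend on $N$. Indeed, the entire statement could alternatively be obtained verbatim by setting $b = 0$ in Proposition~\ref{prop:drift-last}.
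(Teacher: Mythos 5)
Your proof is correct and follows essentially the same route the paper takes (the paper states this proposition without proof, as the $b=0$ specialization of Proposition~\ref{prop:drift-last}, i.e.\ the two-observation MLE argument combined with Remark~\ref{rem:varX} and the variance condition of \cite[Theorem~1]{Mishura2018twoest}). Your affine representation, variance bound, and appeal to the cited consistency criterion are exactly the intended argument; the only cosmetic caveat is that Proposition~\ref{prop:drift-last} formally assumes $b \ne 0$, so the ``verbatim'' specialization is really a repetition of its proof with $b=0$ rather than a literal application of its statement.
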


\section{Construction of a strongly consistent estimators for the parameters of (mixed) nifBm}
\label{constr}

In the previous section, we focused on estimating the drift parameter 
$\mu$ in statistical models \eqref{discr-model1} and \eqref{driftOneProc} driven by two or one nifBms, respectively.
In the present section, we address the simultaneous estimation of the parameters of the noise term. Specifically, we construct strongly consistent estimators for these parameters, relying on the ergodic Theorem~\ref{ergodic}.

For technical convenience, we first consider the case in which the drift parameter $\mu$ is known and equals zero. However, our methodology can be extended to the general case of an unknown drift. In that situation, $\mu$ should first be estimated using the estimator $\widetilde\mu_N$ introduced in \eqref{est-drift-last}. Further details of this extension for the models with two and one nifBms will be provided in Remarks \ref{rem:model-with-drift} and \ref{rem:model1-with-drift}, respectively.

\subsection{Estimation of the parameters \texorpdfstring{$H_1$, $H_2$, $a^2$, and $b^2$}{H1, H2, a2, and b2} in the model with two nifBms}

Now, we aim to estimate the vector parameter 
\(\theta = (H_1, H_2, a^2, b^2)\) of the process  
$$\widetilde X^h_k = a X_{kh}^{h,H_1} + b X_{kh}^{h,H_2}.$$
That is, we assume that the drift parameter $\mu$ in model~\eqref{discr-model1} is equal to zero. 
If this assumption does not hold, the parameter $\mu$ should first be estimated using the estimator $\widetilde{\mu}$ from~\eqref{est-drift-last}. 
Subsequently, one may apply the estimator
$\widehat{\theta}_N = (\widehat{H}_{1,N}, \widehat{H}_{2,N}, \widehat{a}^2_N, \widehat{b}^2_N)$
defined in Theorem~\ref{stat}, where the observations $\widetilde{X}^h_k$ are replaced by
\begin{equation}\label{discr-model-approx}
\widetilde{Y}^h_k \coloneqq Y^h_k - \widetilde{\mu}_N G^h_k
= \widetilde X^h_k + \left(\mu - \widetilde{\mu}_N \right) G^h_k.
\end{equation}
This approach requires additional assumptions on the sequence $\{G^h_k, k \ge 1\}$, which are detailed in Remark~\ref{rem:model-with-drift}.

Having four parameters to estimate, we apply Theorem~\ref{ergodic} to construct four corresponding statistics. The first statistic, $\xi^1_N$, is based directly on the stationary increment process
$\Delta \widetilde{X}^h_k = \widetilde{X}^h_{k+1} - \widetilde{X}^h_k$,
which was considered in Theorem~\ref{ergodic}. The remaining three statistics are constructed analogously from the increments
$\Delta \widetilde{X}^{jh}_k = \widetilde{X}^{jh}_{k+1} - \widetilde{X}^{jh}_k$, $j = 2,4,8$,
whose stationarity can be established in the same way as for $\Delta \widetilde{X}^h_k$. Thus, we define the following family of statistics: 
\begin{equation}\label{xi_est}
\xi^j_N := \frac{1}{N} \sum_{k=0}^{N-1} 
\left( \widetilde{X}^{jh}_{k+1} - \widetilde{X}^{jh}_{k} \right)^2   
= \frac{1}{N} \sum_{k=0}^{N-1} \left( \Delta \widetilde{X}^{jh}_{k} \right)^2, 
\quad j = 1,2,4,8.
\end{equation}

All four statistics are constructed in the same manner, differing only in the time step used to compute the increment of the process~\eqref{Mprocess}. This design ensures that the exponents appearing in the estimating equations are powers of two, which simplifies the solution for the estimator $\widehat{\theta}$ of the parameter vector~$\theta$.
\begin{lemma}\label{lemmaest}
The following a.s.\ convergences hold for \(N\xrightarrow{}\infty\):
\begin{equation}\label{manyj}
    \xi^j_N\rightarrow \mathbb{E}(\xi^j_1) = A_1a^2\big(2^{2H_1}-1\big)j^{2H_1} + A_2b^2\big(2^{2H_2}-1\big)j^{2H_2} , \end{equation}
where $ j=1,2,4,8$ and
\[
A_i=\frac{2h^{2H_i}}{(2H_i+1)( H_i+1)},  \quad i=1,2.
\]
\end{lemma}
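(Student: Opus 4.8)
The plan is to recognize that each statistic $\xi^j_N$ is precisely the ergodic average of the squared increment of a linear combination of two nifBms, observed at a coarser time step $jh$ rather than $h$. First I would note that the increment process $\Delta \widetilde{X}^{jh}_k = \widetilde{X}^{jh}_{k+1} - \widetilde{X}^{jh}_k$ is, for each fixed $j \in \{1,2,4,8\}$, a stationary Gaussian sequence: this follows by the same argument as in Proposition~\ref{lemmastat}, simply replacing the time step $h$ by $jh$ throughout (the stationarity established there does not depend on the particular value of the step size). Consequently, the ergodic Theorem~\ref{ergodic}, applied with step $jh$ and with the test function $Q(x) = x^2$, yields the almost sure convergence
\[
\xi^j_N = \frac{1}{N}\sum_{k=0}^{N-1}\left(\Delta \widetilde{X}^{jh}_k\right)^2 \to \mathbb{E}\left[\left(\Delta \widetilde{X}^{jh}_0\right)^2\right], \qquad N \to \infty,
\]
provided the limiting expectation is finite, which it is since $\Delta \widetilde{X}^{jh}_0$ is a centered Gaussian variable with finite variance.

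The remaining task is to identify the limit $\mathbb{E}[(\Delta \widetilde{X}^{jh}_0)^2]$ explicitly. To do this I would invoke Proposition~\ref{lemmastat}, equation~\eqref{cov_sumoftwo_increments}, evaluated at lag $n = 0$ but with time step $jh$ in place of $h$. This gives
\[
\mathbb{E}\left[\left(\Delta \widetilde{X}^{jh}_0\right)^2\right] = a^2 (jh)^{2H_1}\gamma(H_1,0) + b^2 (jh)^{2H_2}\gamma(H_2,0).
\]
Substituting the $n=0$ value of $\gamma$ from the definition~\eqref{gammy}, namely
\[
\gamma(H,0) = \frac{2\left(2^{2H}-1\right)}{(2H+1)(H+1)},
\]
and separating the factor $j^{2H_i}$ from $h^{2H_i}$, I recover exactly the stated right-hand side of~\eqref{manyj}, with $A_i = \tfrac{2h^{2H_i}}{(2H_i+1)(H_i+1)}$ absorbing the $h$-dependence and the constant prefactor of $\gamma(H_i,0)$. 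This last step is a purely algebraic rearrangement.

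The main obstacle, such as it is, lies less in the computation and more in justifying the stationarity and ergodicity at the coarser step $jh$. The delicate point is that Theorem~\ref{ergodic} is stated for the increment sequence with step $h$; I must be careful to confirm that the hypotheses it rests upon, namely stationarity from Proposition~\ref{lemmastat} and the asymptotic decorrelation from Lemma~\ref{gammas}, both transfer verbatim when $h$ is replaced by $jh$. Since $jh$ is itself just a fixed positive step size, and both of these auxiliary results hold for every fixed step, this transfer is immediate; the paper in fact flags this in the sentence preceding~\eqref{xi_est}, where it observes that the stationarity of $\Delta \widetilde{X}^{jh}_k$ is established ``in the same way as for $\Delta \widetilde{X}^h_k$.'' Thus the only genuine content is the bookkeeping that matches $\gamma(H_i,0)$ to the constants $A_i$, and no serious difficulty arises.
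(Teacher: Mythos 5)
Your proposal is correct and follows essentially the same route as the paper: the paper's proof likewise invokes the ergodic Theorem~\ref{ergodic} with $Q(x)=x^2$ (valid for any fixed step size, hence for $jh$), and identifies the limit by substituting $h$, $2h$, $4h$, $8h$ into formula~\eqref{cov_sumoftwo_increments} of Proposition~\ref{lemmastat} at lag $n=0$, exactly as you do. Your explicit check that $\gamma(H_i,0)=\tfrac{2(2^{2H_i}-1)}{(2H_i+1)(H_i+1)}$ recombines into the constants $A_i$ and the factor $j^{2H_i}$ is just the bookkeeping the paper leaves implicit.
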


\begin{proof}
Since the random sequence   $\{\Delta \widetilde X_{k}^h,\, k\ge 0\}$  is  stationary and ergodic for any $h>0$,  the proof of all a.s.\ convergence follows directly from the ergodic Theorem \ref{ergodic} with $Q(x)=x^2$, the independence of the processes $W^{H}_i$, $i=1,2$ and the values of $\mathbb{E}(\xi^j_1)$ that are calculated in accordance with formula \eqref{cov_sumoftwo_increments} from Proposition \ref{lemmastat}, substituting subsequently $h$, $2h$, $4h$, $8h$ in it. 
\end{proof}

\begin{remark}\label{rem:model-with-drift}
For $\mu \ne 0$, the random sequence $\{\Delta Y^h_k,\, k\ge 0\}$, where $Y^h_k$ is defined by \eqref{discr-model1}, is not ergodic.
However, in this case an analogue of Lemma~\ref{lemmaest} can be obtained.
For the modified observations \eqref{discr-model-approx}, define
\[
\widetilde{\xi}^j_N := \frac{1}{N} \sum_{k=0}^{N-1} \left( \widetilde{Y}^{jh}_{k+1} - \widetilde{Y}^{jh}_{k} \right)^2
= \frac{1}{N} \sum_{k=0}^{N-1} \left(\Delta \widetilde{Y}^{jh}_{k} \right)^2, \quad  j \in \{1,2,4,8\}.
\]
Then
\begin{align}
\widetilde{\xi}^j_N 
&= \frac{1}{N} \sum_{k=0}^{N-1} \left(\Delta \widetilde{X}^{jh}_{k} + \left(\mu - \widetilde{\mu}_N \right) \Delta G^{jh}_k \right)^2
\notag\\
&= \xi^j_N +  \left(\mu - \widetilde{\mu}_N \right)^2 \frac{1}{N}\sum_{k=0}^{N-1} \left(\Delta G^{jh}_k \right)^2
+ \left(\mu - \widetilde{\mu}_N \right) \frac{2}{N}  \sum_{k=0}^{N-1} \Delta \widetilde{X}^{jh}_{k}  \Delta G^{jh}_k .
\label{xi-modified}
\end{align}
If assumption \eqref{assumpMLEcons2} holds, then $\widetilde{\mu}_N \to \mu$ a.s.\ as $N \to \infty$, by Proposition~\ref{prop:drift-last}.
Therefore, under the additional assumption
\begin{equation}\label{assump-increments}
\sum_{k=0}^{N-1} \left(\Delta G^{jh}_k \right)^2 = O(N),
\quad N \to \infty,
\end{equation}
we obtain the almost sure convergence
\begin{equation}\label{xi-modified-conv}
 \widetilde{\xi}^j_N \to A_1a^2\big(2^{2H_1}-1\big)j^{2H_1} + A_2b^2\big(2^{2H_2}-1\big)j^{2H_2},
 \quad N \to \infty.
\end{equation}
Indeed, the first term on the right-hand side of \eqref{xi-modified} converges to the stated limit by \eqref{manyj}; 
the second term tends to zero due to \eqref{assump-increments}, and the third term also vanishes, 
since it can be bounded via the first two terms using the Cauchy--Schwarz inequality.

The convergence in \eqref{xi-modified-conv} guarantees the strong consistency, in the model with drift, 
of the estimator $\widehat{\theta}_N = (\widehat{H}_{1,N}, \widehat{H}_{2,N}, \widehat{a}^2_N, \widehat{b}^2_N)$
defined in Theorem~\ref{stat}, where $\widetilde\xi^j_N$ are used instead of $\xi^j_N$.

Assumption \eqref{assump-increments} is satisfied, for instance, if the sequence 
$\{\Delta G^{jh}_k, k \ge 0\}$ is bounded; in particular, for a linear drift function $G(t) = t$,
we have $\Delta G^{jh}_k = jh$ for all $k$ (in this case, condition \eqref{assumpMLEcons2} also holds).
More generally, \eqref{assump-increments} is valid whenever $G$ satisfies a Lipschitz or H\"older condition.
\end{remark}

Now we are in position to construct strongly consistent estimators of the parameter \(\theta=(H_1,H_2,a^2,b^2)^\top\).

The subsequent notation will be used:
$\log x=\log_e x$,
\begin{equation*}
    \log_+ x = 
\begin{cases}
\log  x, & \text{if } x > 0, \\
0, & \text{if } x \leq 0,
\end{cases}
\qquad \sqrt[+]{x} =
\begin{cases}
\sqrt{x}, & \text{if}\, x > 0, \\
0, & \text{if}\, x\, \leq 0.
\end{cases}
\end{equation*}
Also, for any $N\ge 1$  denote
\begin{gather}\label{Dxy}
    D_N = \left(\xi^4_N\xi^2_N-\xi^8_N\xi^1_N\right)^2 - 4\left(\xi^4_N\xi^1_N-\left(\xi^2_N\right)^2\right)\left(\xi^8_N\xi^2_N-\left(\xi^4_N\right)^2\right),
\\
    x_N=\frac{\xi^8_N \xi^1_N-\xi^4_N\xi^2_N+\sqrt[+]{D_N}}{2(\xi^4_N\xi^1_N-(\xi^2_N)^2)}, \qquad y_N=\frac{\xi^8_N \xi^1_N-\xi^4_N\xi^2_N-\sqrt[+]{D_N}}{2(\xi^4_N\xi^1_N-(\xi^2_N)^2)}.
\end{gather} 
From now on, we assume that the fraction equals zero if its denominator equals zero. We also assume that coefficients $a$ and $b$ are non-zero, so that we indeed have two components in the model.

\begin{theorem}\label{stat}
    Let $0< H_2 < H_1 < 1$. The random vector  \(\widehat{\theta}_N=(\widehat{H}_{1,N},\widehat{H}_{2,N},\widehat{a}^2_N, \widehat{b}^2_N )\), where
\begin{gather}\label{estH1}
    \widehat{H}_{1,N}=\frac{1}{2\log2}\log_+\left(x_N \right),
\\
\label{estH2}
    \widehat{H}_{2,N}=\frac{1}{2\log2}\log_+\left(y_N \right),
\\
    \widehat{a}_N^2 =\frac{\bigl(2 \widehat{H}_{1,N} +1\bigr) \bigl( \widehat{H}_{1,N}+1\bigr) \left(\xi_N^2-y_N\xi_N^1\right)}{2h^{ 2\widehat{H}_{1,N}}(x_N-y_N)(x_N-1)},
\shortintertext{and}
    \widehat{b}_N^2 =\frac{\bigl(2\widehat{H}_{2,N}+1\bigr) \bigl(\widehat{H}_{2,N}+1\bigr) \left(\xi_N^2-x_N\xi_N^1\right)}{2h^{2\widehat{H}_{2,N}}(y_N-x_N)(y_N-1)} 
\end{gather}
is a strongly consistent estimator of \(\theta=(H_1,H_2,a^2,b^2)\).
\end{theorem}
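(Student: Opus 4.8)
The plan is to deduce the result from the almost sure convergences $\xi^j_N\to\xi^j_\infty$ furnished by Lemma~\ref{lemmaest}, combined with a continuous mapping argument, since each entry of $\widehat\theta_N$ is an explicit function of the four statistics $\xi^1_N,\xi^2_N,\xi^4_N,\xi^8_N$. The starting point is to expose the power-sum structure of the limits: writing $u=2^{2H_1}$, $v=2^{2H_2}$, $\alpha=A_1a^2(2^{2H_1}-1)$ and $\beta=A_2b^2(2^{2H_2}-1)$, relation~\eqref{manyj} becomes $\xi^{2^k}_\infty=\alpha u^k+\beta v^k$ for $k=0,1,2,3$. Because $a,b\neq 0$ and $0<H_2<H_1<1$, all of $\alpha,\beta>0$ and $u>v>1$; this positivity is what will drive the whole argument.

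First I would identify $x_N$ and $y_N$ as the two roots of the quadratic $A_Nt^2-B_Nt+C_N$ with $A_N=\xi^4_N\xi^1_N-(\xi^2_N)^2$, $B_N=\xi^8_N\xi^1_N-\xi^4_N\xi^2_N$, $C_N=\xi^8_N\xi^2_N-(\xi^4_N)^2$, whose discriminant is exactly $D_N$. A direct substitution of the limits gives $A_N\to\alpha\beta(u-v)^2$, $B_N\to\alpha\beta(u-v)^2(u+v)$, $C_N\to\alpha\beta uv(u-v)^2$ and hence $D_N\to\alpha^2\beta^2(u-v)^6$. In particular the leading coefficient $A_N$ and the discriminant $D_N$ have strictly positive limits, so $\sqrt[+]{D_N}\to\alpha\beta(u-v)^3$, and the two roots separate: $x_N\to u=2^{2H_1}$ (the larger root, selected by the $+$ sign since $H_1>H_2$) and $y_N\to v=2^{2H_2}$.

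Next I would invoke continuity at the limit. Since $u$, $v$, $D_N^\infty$ and $A_N^\infty$ are all strictly positive, on a neighborhood of the limit point the truncations $\log_+$ and $\sqrt[+]{\cdot}$ agree with the ordinary $\log$ and $\sqrt{\cdot}$, and every map entering the definition of $\widehat\theta_N$ is continuous there. The continuous mapping theorem for almost sure convergence then yields $\widehat H_{1,N}=\frac{1}{2\log2}\log_+(x_N)\to\frac{1}{2\log2}\log(2^{2H_1})=H_1$ and, identically, $\widehat H_{2,N}\to H_2$. For the amplitudes I would use the auxiliary identity $\xi^2_\infty-v\,\xi^1_\infty=\alpha(u-v)$ together with $\alpha=A_1a^2(u-1)$; substituting the limits into the formula for $\widehat a^2_N$ and cancelling the factors $(2H_1+1)(H_1+1)$, $(u-v)$, $(u-1)$ and $h^{2H_1}$ leaves exactly $a^2$, and the symmetric computation gives $\widehat b^2_N\to b^2$. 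Collecting the four convergences establishes the strong consistency of $\widehat\theta_N$.

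The one genuine obstacle is the positivity verification in the second paragraph, namely that $A_N^\infty=\alpha\beta(u-v)^2>0$ and $D_N^\infty=\alpha^2\beta^2(u-v)^6>0$: this is precisely what legitimises both the continuous mapping step and the claim that the truncations $\log_+,\sqrt[+]{\cdot}$ (inserted only to keep $\widehat\theta_N$ well-defined for finite $N$, where sampling fluctuations can make a radicand or a logarithm's argument negative) are immaterial in the limit. Both inequalities reduce to the identities just displayed and rest on the standing hypotheses $a,b\neq 0$ (so $\alpha,\beta>0$) and $H_1\neq H_2$ (so $u\neq v$); without either, the two components would be unidentifiable and the denominators would degenerate.
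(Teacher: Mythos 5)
Your proposal is correct and follows essentially the same route as the paper's proof: both rest on the a.s.\ limits from Lemma~\ref{lemmaest}, the identification of $x_N,y_N$ as the roots of the same quadratic with discriminant $D_N$, the same factorization identities (your $A_N\to\alpha\beta(u-v)^2$, $B_N\to\alpha\beta(u-v)^2(u+v)$, $C_N\to\alpha\beta uv(u-v)^2$, $D_N\to\alpha^2\beta^2(u-v)^6>0$ are exactly the paper's computations of $\eta_1\eta_4-\eta_2^2$, $\eta_2\eta_4-\eta_1\eta_8$, $\eta_2\eta_8-\eta_4^2$ and $D$), and a continuity/plug-in conclusion. The only difference is organizational: the paper first derives the estimator formulas by eliminating $A,B$ from the moment equations and then substitutes the statistics $\xi^j_N$, whereas you verify convergence of the given formulas directly, and you are somewhat more explicit than the paper about why the truncations $\log_+$, $\sqrt[+]{\cdot}$ and the zero-denominator convention become immaterial in the limit.
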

\begin{remark} Condition of Theorem \ref{stat} does not mean that we should ``know what index is bigger''. It just means that taking $x_N$ we estimate bigger index, and taking $y_N$ we estimate the smaller one.
\end{remark}

\begin{proof}
    Denote 
    \[x=2^{2H_1}, \quad y=2^{2H_2}, \quad A=a^2A_1, \quad B=b^2A_2, \]
    \[ \eta_j=\mathbb{E}(\xi^j_1),\;j=1,2,4,8.\] 
Then the equalities in the right-hand side of \eqref{manyj} from    Lemma \ref{lemmaest} can be rewritten in the form 
    \begin{equation}\label{eqxi}
        \eta_1=A(x-1)+B(y-1),
    \end{equation}
     \begin{equation}\label{eqpsi}
        \eta_2=Ax(x-1)+By(y-1),
    \end{equation}
    \begin{equation}\label{eqtau}
        \eta_4=Ax^2(x-1)+By^2(y-1),
    \end{equation}
    \begin{equation}\label{eqnu}
        \eta_8=Ax^3(x-1)+By^3(y-1).
    \end{equation}
Solving the system of linear (w.r.t.\ $A$ and $B$)   equations   \eqref{eqxi} and \eqref{eqpsi}, we can present the variables $A$ and $B$ as functions of $x$ and $y$:
\begin{equation}\label{eqAB}
    A=\frac{\eta_2-y\eta_1}{(x-y)(x-1)}, \quad B=\frac{\eta_2-x\eta_1}{(y-x)(y-1)}.
\end{equation}
Inserting into \eqref{eqtau}  expressions \eqref{eqAB} obtained for   $A$ and $B$, we get that   
\begin{equation}\label{16tau}
\begin{aligned}
       \eta_4&=\frac{\eta_2-y\eta_1}{x-y}x^2+\frac{\eta_2-x\eta_1}{y-x}y^2\\
        & =\frac{1}{x-y}\bigl(\eta_2 x^2-\eta_1 x^2y-\eta_2 y^2+\eta_1 xy^2 \bigr)\\
      &=\frac{1}{x-y}\bigl(\eta_2(x^2-y^2)-\eta_1 xy(x-y) \bigr)=\eta_2(x+y)-\eta_1 xy,
\end{aligned}
 \end{equation}
whence 
\begin{equation}\label{x1}
    x=\frac{\eta_4-\eta_2 y}{\eta_2-\eta_1 y}.
\end{equation}
Note that    \(\eta_2-\eta_1 y=A(x-1)(x-y)\), $A>0$ ($A$ is positive and can be zero only if $a=0$ but this is the degenerate case with only one component for the process $X_k^h$), $x=2^{H_1}>0$, $x=1$ if $H_1=0$ and $x-y=0$ if $H_1=H_2$. If we assume $H_1 \neq H_2$, the denominator is non-zero. 

Similarly, it follows from \eqref{eqnu} and \eqref{eqAB} that      
\begin{equation} \label{64nu}
\begin{aligned}
        \eta_8&=\frac{\eta_2-y\eta_1}{x-y}x^3+\frac{\eta_2-x\eta_1}{y-x}y^3\\&
       =\frac{1}{x-y}\bigl(\eta_2 x^3-\eta_1 x^3y-\eta_2 y^3+\eta_1 xy^3 \bigr)\\&
        =\frac{1}{x-y}\bigl(\eta_2\left(x^3-y^3\right)-\eta_1 xy(x^2-y^2) \bigr)\\&
      =\frac{1}{x-y}\bigl(\eta_2(x-y)(x^2+xy+y^2)-\eta_1 xy(x-y)(x+y) \bigr)
      \\&=\eta_2\left(x^2+xy+y^2\right)-\eta_1 xy(x+y).
    \end{aligned}
\end{equation}
    Multiplying the  right-hand side of  \eqref{16tau} by $(x+y)$ and subtracting the right-hand side of  \eqref{64nu}, we get that 
    \begin{equation*}
        \eta_2(x+y)^2-\eta_2\left(x^2+xy+y^2\right)=\eta_4(x+y)-\eta_8, 
        \end{equation*}
 whence            
\begin{equation}\label{x2}
    x=  \frac{\eta_8- \eta_4 y}{\eta_4-\eta_2 y}.
\end{equation}
Again, the denominator is non-zero. 
 It follows from   equations \eqref{x1} and \eqref{x2} that 
    \begin{equation*}
        \frac{\eta_8- \eta_4 y}{\eta_4-\eta_2 y}=\frac{\eta_4-\eta_2 y}{\eta_2-\eta_1 y},
    \end{equation*}
and we obtain the following quadratic equation w.r.t.\ $y$:
\begin{equation}\label{quad}
    y^2\left(\eta_1 \eta_4-\eta_2^2\right)+y(\eta_2 \eta_4 -\eta_1 \eta_8)+\left(\eta_2\eta_8-\eta_4^2\right)=0.
\end{equation}
It is evident that $x$ satisfies the same quadratic equation due to symmetry. Hence, $x$ and $y$ with $(x>y)$ are the two roots of equation \eqref{quad},
\begin{equation}
    x,y=\frac{(\eta_1 \eta_8-\eta_2 \eta_4)\pm\sqrt{D}}{2(\eta_1 \eta_4-\eta_2^2)},
\end{equation}
where the denominator is again   non-zero if $H_1 \neq H_2$, 
and 
\begin{equation}
    \begin{aligned}
            D=&(\eta_2 \eta_4 -\eta_1 \eta_8)^2-4(\eta_1 \eta_4-\eta_2^2)(\eta_2\eta_8-\eta_4^2).
    \end{aligned}
\end{equation}
Let us  check if the discriminant $D$ is nonnegative. We use the equalities \eqref{eqxi}--\eqref{64nu} and conclude that 
\begin{align}\label{2418}
\eta_2 \eta_4 -\eta_1 \eta_8 &= AB(x-1)(y-1)(xy^2+x^2y-y^3-x^3)\\ 
&= -AB(x-1)(y-1)(x+y)(x-y)^2  < 0,
\\
\label{1422}
\eta_1 \eta_4-\eta_2^2 &= AB(x-1)(y-1)(x^2-2xy+y^2)\\
&= AB(x-1)(y-1)(x-y)^2 > 0,
\\
\label{2844}
\eta_2\eta_8-\eta_4^2 &= AB(x-1)(y-1)(xy^3-2xy^2+x^3y)\\
&= AB(x-1)(y-1)xy(x-y)^2 > 0.
\end{align}
Thus
\begin{align*}
D &= A^2B^2(x-1)^2(y-1)^2(x+y)^2(x-y)^4-4A^2B^2(x-1)^2(y-1)^2xy(x-y)^4\\
 &= A^2B^2(x-1)^2(y-1)^2(x-y)^6>0.
\end{align*}
Since $D$ is positive,  we have two different roots
 
\begin{equation}\label{xandy}
    x=\frac{\eta_1 \eta_8-\eta_2 \eta_4+\sqrt{D}}{2(\eta_1 \eta_4-\eta_2^2)}, \qquad y=\frac{\eta_1 \eta_8-\eta_2 \eta_4-\sqrt{D}}{2(\eta_1 \eta_4-\eta_2^2)}
\end{equation}
Since \(x=2^{2H_1}\) and \(y=2^{2H_2}\), we get from \eqref{xandy} that 
\begin{equation}\label{estimH}
  H_1=\frac{1}{2\log2}\log\left(\frac{\eta_1 \eta_8-\eta_2 \eta_4+\sqrt{D}}{2(\eta_1 \eta_4-\eta_2^2)} \right), \quad H_2=\frac{1}{2\log2}\log\left(\frac{\eta_1 \eta_8-\eta_2 \eta_4-\sqrt{D}}{2(\eta_1 \eta_4-\eta_2^2)} \right)
\end{equation}
   Recall that $a^2$ and $b^2$ can be obtained from equations \eqref{eqAB}:
\begin{equation}\label{aNbN}
\begin{aligned}
    &a^2 =\frac{(2H_1+1)(H_1+1)(\eta_2-y\eta_1)}{2h^{2H_1}(x-y)(x-1)},\qquad
  b^2   =\frac{(2H_2+1)(H_2+1)(\eta_2-x\eta_1)}{2h^{2H_2}(y-x)(y-1)}.
\end{aligned}
\end{equation}

Now, in order to obtain the estimators, we substitute $\eta_j $ with  $\xi_N^j$ in \eqref{xandy} and \eqref{estimH} and obtain $D_N$, $x_N$, $y_N$, $\widehat H_{1,N}$ and $\widehat H_{2,N}$. Then we substitute in \eqref{aNbN} $\widehat H_{i,N}$, $i=1,2$ instead of $H_i$, $i=1,2$ and also other estimators and obtain $\widehat{a}^2_N$ and $\widehat{b}^2_N$. Note  that the random denominators are asymptotically non-zero with probability 1. Strong consistency of these estimators follows from strong consistency of $\xi_N^j$. 
\end{proof}
\begin{remark}
    The estimators for \(H_1\) and \(H_2\) do  not depend explicitly on  \(h\), the dependence on $h$ is implicitly present in the estimators \(\xi^j_N\) for \(j=1,2,4,8\).
\end{remark}

\begin{remark}\label{substit}
Note that the values $\widetilde X^{2h}_k$, $\widetilde X^{4h}_k$, and $\widetilde X^{8h}_k$ used in our estimators can be expressed in terms of $\widetilde X^h_k$. By \eqref{Mprocess},  
\begin{align}
\widetilde X^{2h}_{k} &= \frac{1}{2h} \int_{2hk}^{2h(k+1)} \left(a W_u^{H_1} + b W_u^{H_2}\right) du \notag\\
&= \frac12 \left( \frac{1}{h}\int_{2hk}^{2hk+h} \!\left(a W_u^{H_1} + b W_u^{H_2}\right) du 
   + \frac{1}{h} \int_{(2k+1)h}^{(2k+2)h}\! \left(a W_u^{H_1} + b W_u^{H_2}\right) du \right) \notag\\
&= \tfrac12 \left(\widetilde X_{2k}^h + \widetilde X_{2k+1}^h\right).
\label{2h-2processes}
\end{align}
More generally, we obtain  
\begin{equation}\label{XjhviaXh}
\widetilde X^{jh}_{k} = \frac1j \sum_{l=0}^{j-1} \widetilde X_{jk+l}^h,
\quad j \in \{1, 2, 4, 8\}.
\end{equation}
\end{remark}

\begin{remark}\label{newnot}
Note that if we fix a final time horizon $T$ and follow the trajectory of our process up to $T$, the number of observations, that is, the terms in the summations of each statistic $\xi^1$, $\xi^2$, $\xi^4$, $\xi^8$ will be different. Indeed, given that for each statistic we consider increments of different amplitude, we have $8N$ elements for $\xi^1$, $4N$ elements for $\xi^2$, $2N$ elements for $\xi^4$ and $N$ elements for $\xi^8$. Thus, we should replace $\xi^1_N$ with $\xi^1_{8N}$, $\xi^2_N$ with $\xi^2_{4N}$, $\xi^4_N$ with $\xi^4_{2N}$ in \eqref{Dxy}. 
However, this notation will prove more convenient and will therefore be adopted for studying asymptotic normality, which will be the subject of Section~\ref{sec:asnorm}.
Of course, since we use the ergodic theorem and let $N \to \infty$, it does not matter that the statistics involve different numbers of elements.
\end{remark}

\subsection{Estimation of the parameters \texorpdfstring{$H$ and $a^2$}{H and a2} in the model with one nifBm}
\label{ssec:1proc}
Consider now the case when we have only one nifBm, more precisely, let  $b=0$. To distinguish from the previous case, we call this model \emph{``model with one nifBm''}.  In this case  we need only two statistics to compute the estimators for $H$ and $a^2$. Of course, we can combine different pairs of statistics; however, let us consider the simplest possibility. So, we observe the 
increments  $\Delta X_k^{jh} = X^{jh}_{(k+1)h} - X_{kh}^{jh}$   for $j =1, 2$. In this case $$\xi^j_N := \frac{1}{N} \sum_{k=0}^{N-1} \left(   X^{jh}_{(k+1)h} -   X^{jh}_{kh} \right)^2   = \frac{1}{N} \sum_{k=0}^{N-1} \left( \Delta   X^{jh}_{k} \right)^2, \quad  j=1,2,$$ the convergence \eqref{manyj} takes the form 
\begin{equation}\label{manyjonly2}
    \xi^j_N\rightarrow \mathbb{E}(\xi^j_1) = A_1a^2\big(2^{2H}-1\big)j^{2H},\quad
    \text{a.s., as } N\to\infty, \quad j=1,2, \end{equation}
and we immediately obtain the following result, which we formulate without proof, since it uses the same approach as in the proof of Theorem \ref{stat}, but with significant simplifications.

Denote \begin{equation}\label{nivBmone}\widehat H_N=\frac{1}{2\log 2} \log_+ \left(\frac{\xi_N^2}{\xi_N^1}\right), \quad \widehat A_{N}=\frac{2h^{2\widehat H_N}}{\bigl(2\widehat H_N+1\bigr)\bigl(\widehat H_N+1\bigr)}. 
\end{equation}

\begin{theorem}\label{est1H}
 Consider the model with one nifBm, and let \(0<H<1\) and $a^2>0$. Then the estimator  \(\widehat{\theta}_N=(\widehat{H}_N, \widehat{a}_N^2)\)  where $\widehat{H}_N$ is taken from \eqref{nivBmone} and
 \begin{equation}\label{est_a}
   \widehat{a}_N^2=\frac{\xi_N^1}{\widehat A_{N}\bigl(2^{2\widehat H_N}-1\bigr)},  
 \end{equation}
 is a strongly consistent estimator of \(\theta=(H,a^2)\).
\end{theorem}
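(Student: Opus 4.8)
The plan is to mirror the structure of the proof of Theorem~\ref{stat}, exploiting the fact that the one-nifBm model is the degenerate case $b=0$ of the two-nifBm model, so that only two unknowns $H$ and $a^2$ remain and only two statistics $\xi^1_N$, $\xi^2_N$ are needed. First I would set, in analogy with the proof of Theorem~\ref{stat}, the auxiliary quantities $x = 2^{2H}$ and $A = a^2 A_1$, and $\eta_j = \mathbb{E}(\xi^j_1)$ for $j = 1,2$. By Lemma~\ref{lemmaest} specialized to the single-component case (i.e.\ the convergence \eqref{manyjonly2}), the limiting relations become the two equations
\begin{equation*}
\eta_1 = A(x-1), \qquad \eta_2 = A x (x-1).
\end{equation*}
These are trivially solvable: dividing the second by the first gives $x = \eta_2/\eta_1$, and then $A = \eta_1/(x-1)$, provided the denominators are nonzero. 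Since $A = a^2 A_1 > 0$ (as $a \ne 0$ and $A_1 > 0$) and $x = 2^{2H} > 1$ for $H > 0$, both $\eta_1 > 0$ and $x - 1 > 0$, so the solution is well-defined and unique.

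The second step is to invert back to the original parameters. From $x = 2^{2H}$ we recover $H = \frac{1}{2\log 2}\log x = \frac{1}{2\log 2}\log(\eta_2/\eta_1)$, which is exactly the population analogue of the estimator $\widehat{H}_N$ in \eqref{nivBmone}, with $\log_+$ used to guarantee the expression is well-defined even when the random ratio $\xi^2_N/\xi^1_N$ is not positive in finite samples. Recovering $a^2$ proceeds from $A = a^2 A_1$ together with the explicit form of $A_1 = 2h^{2H}/((2H+1)(H+1))$; writing $\widehat{A}_N$ for the plug-in value of $A_1$ at $\widehat{H}_N$ as in \eqref{nivBmone} and using $\eta_1 = A(x-1) = a^2 A_1 (2^{2H}-1)$, one solves to obtain $a^2 = \eta_1/(A_1(2^{2H}-1))$, whose empirical counterpart is precisely \eqref{est_a}.

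The final step is to pass from the population identities to strong consistency of the plug-in estimators. This follows from the a.s.\ convergences $\xi^j_N \to \eta_j$ in \eqref{manyjonly2} (themselves a consequence of the ergodic Theorem~\ref{ergodic}) together with the continuous mapping theorem: the maps $(\eta_1,\eta_2) \mapsto \log(\eta_2/\eta_1)$ and $(\eta_1,\eta_2) \mapsto a^2$ are continuous on the relevant domain, where $\eta_1 > 0$ and $\eta_2/\eta_1 > 1$, so the limits hold almost surely. The role of $\log_+$ and $\sqrt[+]{\cdot}$-type truncation is to keep the estimators well-defined in finite samples; since the true limiting arguments lie strictly inside the region where the truncation is inactive, the truncation has no effect asymptotically and strong consistency is preserved. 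I do not anticipate a serious obstacle here: compared with Theorem~\ref{stat}, the linear system is two-dimensional rather than requiring the elimination through \eqref{x1}--\eqref{quad} and the discriminant analysis, so the only point requiring a word of care is confirming that all denominators ($\eta_1$, $x-1$, and their empirical versions) are asymptotically bounded away from zero with probability one, which is immediate from $A > 0$ and $x > 1$.
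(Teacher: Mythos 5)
Your proposal is correct and is essentially the argument the paper has in mind: Theorem~\ref{est1H} is stated without proof precisely because it follows the proof of Theorem~\ref{stat} with the simplifications you carry out (the $2\times 2$ system $\eta_1 = A(x-1)$, $\eta_2 = Ax(x-1)$ replacing the four-equation elimination, then plug-in via the a.s.\ convergence \eqref{manyjonly2} and continuity, with the $\log_+$ truncation asymptotically inactive since $x = 2^{2H} > 1$ and $A > 0$). Your explicit check that the denominators stay bounded away from zero a.s.\ is exactly the point the paper's Theorem~\ref{stat} proof also notes, so nothing is missing.
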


\begin{remark}\label{rem:model1-with-drift}
Similarly to the model with two nifBms, the proposed approach can be adapted to the model \eqref{driftOneProc} with a non-zero drift.
Under assumptions \eqref{assumpMLEcons} and \eqref{assump-increments} on the sequence ${G^h_k}$, we employ a two-stage procedure for the strongly consistent estimation of all model parameters.
First, the drift parameter $\mu$ is estimated using the strongly consistent estimator $\widetilde{\mu}_N$ from Proposition \ref{prop:drift-last-oneproc}.
Subsequently, $H$ and $a^2$ are estimated using \eqref{nivBmone} and \eqref{est_a}, respectively, where the observations $X^{jh}_{kh}$ are replaced by $Y^{jh}_k - \widetilde{\mu}_N G^{jh}_k$.
\end{remark}

\begin{remark}\label{rem:modification}
Proceeding analogously to Remark \ref{newnot}, we note that the statistic $\xi^1_N$ appearing in the estimators \eqref{nivBmone} and \eqref{est_a} 
should be replaced by $\xi^1_{2N}$. 
\end{remark}

\section{Asymptotic normality}
\label{sec:asnorm}

In this section, we establish the joint asymptotic normality of the estimators. 
We first consider the model with a single nifBm and without drift, and prove the asymptotic normality of the estimator
\(\widehat{\theta}_N=(\widehat{H}_N, \widehat{a}_N^2)\)
defined in \eqref{nivBmone}--\eqref{est_a}. 
For this case, detailed proofs are provided. 
We then present, without proofs, the corresponding results for more general models that incorporate a drift term and/or 
an additional nifBm in the noise component. 
These results can be obtained by applying essentially the same arguments, although their proofs require considerably more technical details and lead to cumbersome expressions; for this reason, they are omitted here.

\subsection{Asymptotic normality in the model with one nifBm}

Now we prove the asymptotic normality of the strongly consistent estimator for $\theta = (H,a^2)$ in the one-nifBm model (see subsection~\ref{ssec:1proc}). Following Remark~\ref{rem:modification}, we employ the modified notation
for the statistics,
$\xi^1_{2N}$ and $\xi^2_N$. By~\eqref{manyjonly2}, as $N \to \infty$,
\begin{equation}\label{convtof}
\xi^1_{2N} \to \mathbb{E}(\xi^1_1) = A_1 a^2 (2^{2H}-1)
\eqqcolon f_1(\theta),
\quad 
\xi^2_{N} \to \mathbb{E}(\xi^2_1) = A_1 a^2 (2^{2H}-1) 2^{2H}
\eqqcolon f_2(\theta)
\;\;\text{a.s.}
\end{equation}
The next theorem establishes their joint asymptotic normality.

\begin{theorem}\label{asnorm}
Let $0 < H < \tfrac{3}{4}$. Consider the vectors $\bar \xi_N=(\xi^1_{2N}, \xi^2_N)^\top$, $\bar \eta=(\mathbb{E}(\xi^1_1), \mathbb{E}(\xi^2_1))^\top$. Then $\bar\xi_N$ is asymptotically normal, that is,
\begin{equation*}
 \sqrt{N}(\bar \xi_N-\bar \eta)   
=\sqrt{N}\begin{pmatrix}
\xi^1_{2N} - \mathbb{E}(\xi^1_1) \\
\xi^2_N - \mathbb{E}(\xi^2_1) 
\end{pmatrix}
\xrightarrow{d}
\mathcal{N}\bigl(0,\widetilde \Sigma\bigr),
\quad \text{as } N \to \infty, 
\end{equation*}
where the asymptotic covariance matrix $\widetilde \Sigma$ is given by 
\[
\widetilde \Sigma =
\begin{pmatrix}
\widetilde\Sigma_{11} & \widetilde\Sigma_{12} \\
\widetilde\Sigma_{12} & \widetilde\Sigma_{22}
\end{pmatrix},
\]
with entries 
\begin{gather}
\widetilde \Sigma_{11}= h^{4H} \sum_{i=-\infty}^{\infty} {\gamma}(H,i)^2,
\qquad
\widetilde \Sigma_{22}=2^{4H+1}h^{4H} \sum_{i=-\infty}^{\infty} \gamma(H,i)^2,\\
\widetilde \Sigma_{12}= h^{4H}\sum_{i=-\infty}^{\infty} \gamma(H,i)\bigl(3\gamma(H,i) + 4\gamma(H,i+1) + \gamma(H,i+2)\bigr).
\end{gather}
\end{theorem}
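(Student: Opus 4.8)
The plan is to recognize that, after centering, both $\xi^1_{2N}$ and $\xi^2_N$ are normalized sums of squares of a jointly Gaussian, jointly stationary sequence, i.e.\ quadratic (Hermite-rank-two) functionals living in the second Wiener chaos generated by the single stationary sequence $\{\Delta X^h_k,\, k\ge 0\}$. Asymptotic normality of such functionals under square-summability of the covariance is exactly the content of the Breuer--Major type limit theorem collected in Appendix~\ref{app:gaus-vect}, so the proof reduces to (a) casting $\bar\xi_N$ into the required single-sequence form, (b) verifying the summability hypothesis, and (c) identifying the limiting covariance matrix. I would first reduce the joint statement to a one-dimensional one via the Cram\'er--Wold device: it suffices to prove asymptotic normality of $\lambda_1\sqrt N(\xi^1_{2N}-\bE\xi^1_1)+\lambda_2\sqrt N(\xi^2_N-\bE\xi^2_1)$ for arbitrary $\lambda_1,\lambda_2\in\bR$.

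For the reduction in step (a), I would use the scale relation \eqref{XjhviaXh}, which gives $\Delta X^{2h}_k=\tfrac12\bigl(\Delta X^h_{2k}+2\Delta X^h_{2k+1}+\Delta X^h_{2k+2}\bigr)$, so that $\Delta X^{2h}_k$ is a fixed linear combination of the base increments. Grouping the $2N$ terms of $\xi^1_{2N}$ in consecutive pairs $k=2j,2j+1$, this representation lets me write the linear combination as $N^{-1/2}\sum_{j=0}^{N-1}\Phi(V_j)$, where $V_j=(\Delta X^h_{2j},\Delta X^h_{2j+1},\Delta X^h_{2j+2})$ is a stationary Gaussian vector sequence (a period-two blocking of $\{\Delta X^h_k\}$ with a one-step overlap between consecutive blocks) and $\Phi$ is a fixed centered quadratic form. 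Its covariance function is assembled from $\bE[\Delta X^h_k\Delta X^h_{k+n}]=h^{2H}\gamma(H,n)$ through \eqref{cov-deltaX}.

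For step (b), I would invoke Lemma~\ref{gammas}, which yields $\gamma(H,n)\sim 2H(2H-1)n^{2H-2}$; hence every entry of the covariance function of $\{V_j\}$ decays like $n^{2H-2}$, and its square is summable precisely when $2(2H-2)<-1$, that is, when $H<\tfrac34$. This is exactly where the hypothesis $H<\tfrac34$ is used, and it is the only condition needed to apply the limit theorem of Appendix~\ref{app:gaus-vect} to $N^{-1/2}\sum_j\Phi(V_j)$: since $\Phi$ has no constant or linear part after centering (Hermite rank two), square-summability alone suffices. The Cram\'er--Wold device then gives joint convergence to a centered Gaussian law with some covariance matrix $\widetilde\Sigma$.

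Finally, for step (c) the entries are recovered as $\widetilde\Sigma_{ab}=\lim_N N\,\mathrm{Cov}(\cdot)$ by Isserlis' theorem: for jointly centered Gaussian $U,V$ one has $\mathrm{Cov}(U^2,V^2)=2(\bE[UV])^2$, so each limiting entry is a convergent series of squared increment-covariances. Substituting $\bE[\Delta X^h_i\Delta X^h_j]=h^{2H}\gamma(H,|i-j|)$, $\bE[\Delta X^{2h}_i\Delta X^{2h}_j]=2^{2H}h^{2H}\gamma(H,|i-j|)$, and the cross-covariance coming from the three-term representation of $\Delta X^{2h}_k$, then summing over the relative lag, produces the displayed formulas for $\widetilde\Sigma_{11}$, $\widetilde\Sigma_{22}$ and $\widetilde\Sigma_{12}$. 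The main obstacle is two-fold: first, the clean packaging of the two different time scales (the $2N$ base increments in $\xi^1$ versus the $N$ doubled increments in $\xi^2$) into a single stationary vector sequence so that one CLT applies; and second, the careful lag bookkeeping in step (c), especially for the off-diagonal term $\widetilde\Sigma_{12}$, where the three-term expansion of $\Delta X^{2h}_k$ must be squared and summed against the base increments. The probabilistic heart of the argument, the vanishing of the higher cumulants, is entirely delegated to the square-summability condition $H<\tfrac34$.
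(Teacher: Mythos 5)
Your proposal is correct and follows essentially the same route as the paper's proof: the same three-component blocking $V_j=(\Delta X^h_{2j},\Delta X^h_{2j+1},\Delta X^h_{2j+2})$ obtained from the expansion $\Delta X^{2h}_k=\tfrac12(\Delta X^h_{2k}+2\Delta X^h_{2k+1}+\Delta X^h_{2k+2})$, the same Cram\'er--Wold reduction to a quadratic (Hermite-rank-two) functional handled by the Arcones/Breuer--Major theorem of Appendix~\ref{app:gaus-vect} under the square-summability of $\gamma(H,\cdot)$ guaranteed by Lemma~\ref{gammas} exactly when $H<\tfrac34$, and the same identification of $\widetilde\Sigma$ via Isserlis' formula $\cov(U^2,V^2)=2(\bE[UV])^2$ with lag-rearranged sums.
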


\begin{remark}\label{rem:convCov}
For $H < \tfrac{3}{4}$, all series defining the entries of the asymptotic covariance matrix converge. Moreover, for any $\alpha, \beta \in \mathbb{Z}$,
\begin{equation}
    \sum_{i=-\infty}^{\infty} \left|\gamma(H,i+\alpha)\, \gamma(H,i+\beta)\right| < \infty.
\end{equation}
This follows from the Cauchy--Schwarz inequality:
\[
\sum_{i=-\infty}^{\infty} \left|\gamma(H,i+\alpha)\, \gamma(H,i+\beta)\right| 
\leq \sqrt{\sum_{i=-\infty}^{\infty} \gamma(H,i+\alpha)^2 \sum_{i=-\infty}^{\infty} \gamma(H,i+\beta)^2} 
= \sum_{i=-\infty}^{\infty} \gamma(H,i)^2 < \infty,
\]
where convergence of the final series is guaranteed by Lemma \ref{gammas}.
\end{remark}

\begin{proof}
The proof consists of two parts. In the first part, we establish that $ \sqrt{N}(\bar \xi_N-\bar \eta)$ converges to a bivariate normal distribution. In the second part, we compute the entries of the asymptotic covariance matrix.

\emph{Step 1: Asymptotic normality.}
Recall that
\[
\xi^1_{2N} = \frac{1}{2N} \sum_{k=0}^{2N-1} \bigl(\Delta X^{h}_{k}\bigr)^2, \qquad
\xi^2_N = \frac{1}{N} \sum_{k=0}^{N-1} \bigl(\Delta X^{2h}_{k}\bigr)^2,
\]
where $\Delta X^h_k = X^h_{(k+1)h}-X^h_{kh}$.  
We express $\{\Delta X^{2h}_k\}$ in terms of $\{\Delta X^h_k\}$.  
Similarly to \eqref{2h-2processes}, we have
\[
X^{2h}_{kh} = \tfrac{1}{2} \bigl(X_{2kh}^h + X_{(2k+1)h}^h\bigr),
\]
which implies
\begin{align}
\Delta X^{2h}_k &=
 \tfrac{1}{2}  \bigl( X^h_{(2k+3)h}+ X^h_{(2k+2)h}-X^h_{(2k+1)h}-X^h_{2kh} \bigr) \\
&= \tfrac{1}{2}\bigl( \Delta X^h_{2k+2}+2 \Delta X^h_{2k+1} + \Delta X_{2k}^h \bigr).\label{realizations}
\end{align}
Define $Y_k=(Y_k^{(1)},Y_k^{(2)}, Y_k^{(3)})$, $k \in \mathbb N_0$, by
\begin{equation}\label{Yk}
Y_k^{(1)}=\Delta X^h_{2k}, \qquad Y_k^{(2)}= \Delta X^h_{2k+1}, \qquad Y_k^{(3)}=\Delta X_{2k+2}^{h}.
\end{equation}
Then
\[
\xi^{1}_{2N} = \frac{1}{2N}\sum_{k=0}^{N-1}  \Bigl(\bigl(Y^{(1)}_k\bigr)^2+\bigl(Y^{(2)}_k\bigr)^2\Bigr), 
\quad
\xi^{2}_N= \frac{1}{4N} \sum_{k=0}^{N-1} \bigl(Y_k^{(1)}+ 2 Y_k^{(2)}+Y_k^{(3)}\bigr )^2.
\]

To prove asymptotic normality of $(\xi^{1}_{2N}, \xi^{2}_N)^\top$, we use the Cramér--Wold device.  
Let $\alpha, \beta \in \mathbb R$ with $\alpha^2+\beta^2 \neq 0$ and define
\[
F(y) = \frac{\alpha}{2}(y_1^2+y_2^2) + \frac{\beta}{4}(y_1+2y_2+y_3)^2, 
\qquad y=(y_1,y_2,y_3)^\top \in \mathbb R^3.
\]
Then
\[
\alpha \xi^1_{2N} + \beta \xi^2_N= \frac{1}{N} \sum_{k=0}^{N-1} F(Y_k).
\]
Hence we need to show that
\begin{equation}\label{fNormal}
\sqrt N \Bigl(\alpha(\xi^1_{2N}- \bE[\xi^1_1]) +\beta(\xi^2_N-\bE [\xi^2_1])\Bigr)
=\frac{1}{\sqrt N}\sum_{k=0}^{N-1} (F(Y_k)-\bE[F(Y_k)])
\end{equation}
converges in distribution to a Gaussian law.  
This will be proved by applying the Breuer--Major theorem for stationary Gaussian vectors \cite{Arcones1994, Nourdin2011}; see Theorem \ref{thm:Arcones} in the Appendix.  

Since $F(Y_1)$ is a quadratic form of a Gaussian vector, its Hermite rank is $q = 2$ (see Example \ref{ex:quad-form} in Appendix~\ref{app:gaus-vect}). Therefore, to verify the condition \eqref{cond-arcones} of Theorem \ref{thm:Arcones}
it suffices to establish
\begin{equation}\label{arcones-condition}
\sum_{n \in \mathbb Z} \bigl(\cov(Y_1^{(p)}, Y^{(l)}_{1+n})\bigr)^2 < \infty, 
\quad p,l \in \{1,2,3\}.
\end{equation}
Using \eqref{Yk} and the definition of $\gamma(H,n)$, the covariance structure is
\begin{align*}
\cov(Y_1^{(1)},Y^{(1)}_{1+n}) &= h^{2H}\gamma(H,2n), & 
\cov(Y^{(1)}_{1}, Y^{(2)}_{1+n}) &= h^{2H}\gamma(H,2n+1),\\
\cov(Y^{(1)}_{1}, Y^{(3)}_{1+n}) &= h^{2H}\gamma(H,2n+2), &
\cov(Y^{(2)}_{1}, Y^{(1)}_{1+n}) &= h^{2H}\gamma(H,2n-1),\\
\cov(Y^{(2)}_{1}, Y^{(2)}_{1+n}) &= h^{2H}\gamma(H,2n), &
\cov(Y^{(2)}_{1}, Y^{(3)}_{1+n}) &= h^{2H}\gamma(H,2n+1),\\
\cov(Y^{(3)}_{1}, Y^{(1)}_{1+n}) &= h^{2H}\gamma(H,2n-2), &
\cov(Y^{(3)}_{1}, Y^{(2)}_{1+n}) &= h^{2H}\gamma(H,2n-1),\\
\cov(Y^{(3)}_{1}, Y^{(3)}_{1+n})&=h^{2H}\gamma(H,2n).&&
\end{align*}
Hence, the condition \eqref{arcones-condition} follows from Remark \ref{rem:convCov}. For example,
\[
\sum_{n=- \infty}^{\infty} \bigl(\cov(Y_1^{(1)},Y^{(1)}_{1+n})\bigr)^2
= h^{4H}\sum_{n=- \infty}^{\infty} \gamma(H,2n)^2,
\]
which converges for $H<\tfrac{3}{4}$ by Remark \ref{rem:convCov}. The other cases are analogous. Thus, the conditions of Theorem~\ref{thm:Arcones} are satisfied, proving that \eqref{fNormal} converges weakly to a centered Gaussian distribution.

\medskip

\emph{Step 2: Identification of the asymptotic covariance matrix.}
To determine the covariance matrix of the limiting law, we analyze
\[
N \Var(\xi^1_{2N}), \quad N \Var(\xi^2_N), \quad 
N \cov(\xi^1_{2N},\xi^2_N).
\]

Using the stationarity of $\{\Delta X^h_{k}\}_k$ and formula \eqref{cov-squares}, we obtain
\[
\cov\bigl((\Delta X^h_k)^2,(\Delta X^h_j)^2 \bigr)
=2h^{4H} \gamma(H,k-j)^2.
\]
Hence
\begin{align*}
N \Var(\xi^1_{2N})&= \frac{1}{4N} \Var\Bigl(\sum_{k=0}^{2N-1} (\Delta X_k^h)^2\Bigr)= \frac{1}{4N}\sum_{k=0}^{2N-1} \sum_{j=0}^{2N-1} \cov\Bigl( (\Delta X^h_k)^2,(\Delta X^h_j)^2 \Bigr)\\
&= \frac{h^{4H}}{2N} \sum_{k=0}^{2N-1} \sum_{j=0}^{2N-1} \gamma(H,k-j)^2.
\end{align*}
Upon rearrangement of the sum, we get
\begin{align} N \Var(\xi^1_{2N})&= \frac{h^{4H}}{2N}\sum_{k=0}^{2N-1} \sum_{i=k-2N+1}^{k} {\gamma}(H,i)^2 \\
&= \frac{h^{4H}}{2N} \sum_{i=-2N+1}^{0} \sum_{k=0}^{2N-1+i} {\gamma}(H,i)^2 + \frac{h^{4H}}{2N} \sum_{i=1}^{2N-1} \sum_{k=i}^{2N-1} {\gamma}(H,i)^2\\
&= h^{4H}\sum_{i=-2N+1}^{2N-1} \Bigl( 1-\frac{|i|}{2N}\Bigr) {\gamma}(H,i)^2 \to h^{4H} \sum_{i=-\infty}^{\infty} {\gamma}(H,i)^2,
\label{var1a}
\end{align}
as $N \to \infty$, where the interchange of limit and summation is justified by the dominated convergence theorem, as ensured by Lemma \ref{gammas}.

Analogous computations yield
\[
N \Var(\xi^2_N) \to 2^{4H+1}h^{4H} \sum_{i=-\infty}^{\infty} \gamma(H,i)^2,
\quad \text{as } N\to \infty.
\]

Let us now compute the asymptotic behavior of the covariance between $\xi^1_{2N}$ and $\xi^2_N$. We have
\begin{align*} N \cov \left(\xi^1_{2N}, \xi^2_N\right) &= N \cov \left( \frac{1}{2N} \sum_{k=0}^{2N-1}(\Delta X_k^h)^2, \frac{1}{N} \sum_{j=0}^{N-1}(\Delta X_j^{2h})^2\right) \\ &= \frac{1}{8N}\cov\left(\sum_{k=0}^{N-1} \left((\Delta X_{2k}^h)^2+ (\Delta X^h_{2k+1})^2\right), \sum_{j=0}^{N-1}(\Delta X^h_{2j+2}+2\Delta X^h_{2j+1}+\Delta X_{2j}^h )^2\right) \\ &= \frac{1}{8N}\sum_{k=0}^{N-1} \sum_{j=0}^{N-1} \cov\Bigl((\Delta X_{2k}^h)^2+ (\Delta X^h_{2k+1})^2, (\Delta X^h_{2j+2}+2\Delta X^h_{2j+1}+\Delta X_{2j}^h )^2\Bigr)\\ &= \frac{1}{4N}\sum_{k=0}^{N-1} \sum_{j=0}^{N-1} \biggl[\Bigl(\cov\left(\Delta X_{2k}^h,\Delta X^h_{2j+2}+2\Delta X^h_{2j+1}+\Delta X_{2j}^h \right) \Bigr)^2\\ &\qquad\qquad\qquad\qquad+\Bigl(\cov\left(\Delta X^h_{2k+1},\Delta X^h_{2j+2}+2\Delta X^h_{2j+1}+\Delta X_{2j}^h \right) \Bigr)^2\biggr], \end{align*}
where we used formula \eqref{cov-squares}.

Expressing the covariances in terms of $\gamma(H,n)$ via relation \eqref{cov-deltaX}, we obtain
\begin{align*} 
\MoveEqLeft N \cov (\xi^1_{2N}, \xi^2_N)\\*
&= \frac{h^{4H}}{4N}\sum_{k=0}^{N-1} \sum_{j=0}^{N-1} \Bigl[\bigl(\gamma(H,2k-2j-2) + 2\gamma(H,2k-2j-1) + \gamma(H,2k-2j)\bigr)^2\\ 
&\qquad\qquad\qquad\qquad+\bigl(\gamma(H,2k-2j-1) + 2\gamma(H,2k-2j) + \gamma(H,2k-2j+1)\bigr)^2\Bigr].
\end{align*}

Proceeding as in \eqref{var1a}, by rearranging the sums and passing to the limit as $N \to \infty$, we obtain
\begin{align*}
N \cov (\xi^1_{2N}, \xi^2_N)
&\to\frac{h^{4H}}{2}\sum_{i=-\infty}^{\infty}\Bigl[\bigl(\gamma(H,2i-2) + 2\gamma(H,2i-1) + \gamma(H,2i)\bigr)^2\\
&\qquad\qquad\qquad\qquad+\bigl(\gamma(H,2i-1) + 2\gamma(H,2i) + \gamma(H,2i+1)\bigr)^2\Bigr] \\
&= \frac{h^{4H}}{2}\sum_{j=-\infty}^{\infty}\bigl(\gamma(H,j) + 2\gamma(H,j+1) + \gamma(H,j+2)\bigr)^2. 
\end{align*}

Finally, expanding the square and exploiting the shift-invariance of the sums yields the expression for $\widetilde \Sigma_{12}$ stated in the theorem.

This completes the proof.
\end{proof}

We are now in a position to prove the asymptotic normality of $\widehat \theta_N=(\widehat H_N,\widehat a^2_N)^\top$.
Recall that now we consider the estimator defined by \eqref{nivBmone}--\eqref{est_a}, where $\xi^1_N$ is replaced with $\xi^2_N$, that is,
\[
\widehat H_N=\frac{1}{2\log 2} \log \left(\frac{\xi_N^2}{\xi_{2N}^1}\right),
\quad
\widehat{a}_N^2=\frac{\xi_{2N}^1 \bigl(2\widehat H_N+1\bigr) \bigl(\widehat H_N+1\bigr)}{2h^{2\widehat H_N} \bigl(2^{2\widehat H_N}-1\bigr)}. 
\]
Let $f(\theta) = (f_1(\theta), f_2(\theta))^\top$, where $f_1(\theta)$ and $f_2(\theta)$ are defined in \eqref{convtof}.
Then, by construction, the estimator $\widehat\theta_N$ has the form
$\widehat\theta_N = q(\bar\xi_N)$,
where $\bar \xi_N=(\xi^1_{2N}, \xi^2_N)^\top$ and $q = f^{(-1)}$ is the inverse function of $f$, derived in Theorem \ref{est1H}.
\begin{theorem}\label{deltamethod}
Let $0<H< \frac{3}{4}.$ The estimator $\widehat \theta_N=(\widehat H_N,\widehat a^2_N)^\top$ is asymptotically normal, that is
$$\sqrt N(\widehat \theta_N -\theta)=
\sqrt{N} 
\begin{pmatrix}
\widehat H_N - H \\
\widehat a^2_N - a^2 
\end{pmatrix}
\xrightarrow{d} \mathcal{N}\bigl(0, \Sigma_0\bigr),
$$
with the asymptotic covariance matrix $\Sigma_0$ given by
$$
\Sigma_0 = \bigl(f'(\theta)\bigr)^{-1} \, \widetilde \Sigma \, 
(\bigl(f'(\theta)\bigr)^{-1})^\top,
$$
where $\widetilde \Sigma$ is defined in Theorem \ref{asnorm} and  $f'$ is the Jacobian matrix of $f$ (the elements of this matrix are computed in the proof).
\end{theorem}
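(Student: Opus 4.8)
The plan is to recognize Theorem~\ref{deltamethod} as a direct application of the multivariate delta method, combining the asymptotic normality of $\bar\xi_N$ from Theorem~\ref{asnorm} with the smoothness of the inverse map $q=f^{(-1)}$ constructed in Theorem~\ref{est1H}. Two ingredients are assembled: the weak convergence $\sqrt N(\bar\xi_N-\bar\eta)\xrightarrow{d}\mathcal N(0,\widetilde\Sigma)$, valid for $0<H<\tfrac{3}{4}$ by Theorem~\ref{asnorm} (this is precisely where the range restriction enters, since it guarantees convergence of the series defining $\widetilde\Sigma$); and the differentiability of $q$ at $\bar\eta=f(\theta)$. Granting both, the delta method gives
\[
\sqrt N(\widehat\theta_N-\theta)=\sqrt N\bigl(q(\bar\xi_N)-q(\bar\eta)\bigr)\xrightarrow{d}\mathcal N\bigl(0,\ q'(\bar\eta)\,\widetilde\Sigma\,(q'(\bar\eta))^\top\bigr),
\]
using $q(\bar\eta)=\theta$.

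To justify the differentiability of $q$, I would first note that $f=(f_1,f_2)^\top$ is continuously differentiable on $\{(H,a^2):0<H<1,\ a^2>0\}$, which is clear from the explicit forms $f_1(\theta)=A_1a^2(2^{2H}-1)$ and $f_2(\theta)=A_1a^2(2^{2H}-1)2^{2H}$ with $A_1=2h^{2H}/((2H+1)(H+1))$. It then remains to verify that the Jacobian $f'(\theta)$ is nonsingular, so that the inverse function theorem yields $q\in C^1$ near $\bar\eta$ with $q'(\bar\eta)=(f'(\theta))^{-1}$; substituting this identity into the covariance above reproduces the stated $\Sigma_0$. Writing $c(H):=A_1(2^{2H}-1)>0$, so that $f_1=c(H)a^2$ and $f_2=c(H)a^2\,2^{2H}$, a short computation of the $2\times2$ determinant produces the clean cancellation
\[
\det f'(\theta)=-2\log 2\cdot c(H)^2\,2^{2H}\,a^2,
\]
which is strictly negative for every admissible $\theta$ since $c(H)>0$ and $a^2>0$; hence $f'(\theta)$ is invertible.

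The final task is to record the entries of $f'(\theta)$, as promised in the statement. The derivatives in $a^2$ are immediate: $\partial f_1/\partial(a^2)=c(H)$ and $\partial f_2/\partial(a^2)=c(H)2^{2H}$. The derivatives in $H$ follow from the chain rule applied to the three $H$-dependent factors $h^{2H}$, $(2H+1)(H+1)$, and $2^{2H}-1$, giving $\partial f_1/\partial H=c'(H)a^2$ and $\partial f_2/\partial H=a^2\,2^{2H}\bigl(c'(H)+2\log 2\,c(H)\bigr)$, where $c'(H)$ is obtained by the product and quotient rules from the expression for $c(H)$.

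I expect the only conceptual point requiring care to be the nonsingularity of $f'(\theta)$; the determinant, however, factors cleanly and its sign is forced by the positivity of $c(H)$ and $a^2$, so no delicate estimate is needed. The remainder --- differentiating $f$ and forming the matrix product $(f'(\theta))^{-1}\widetilde\Sigma\,((f'(\theta))^{-1})^\top$ --- is routine bookkeeping.
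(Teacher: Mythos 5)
Your proposal is correct and follows essentially the same route as the paper's proof: apply the delta method to $q=f^{(-1)}$ and $\bar\xi_N$, use the inverse function theorem to replace $q'(\bar\eta)$ by $(f'(\theta))^{-1}$, and verify invertibility by computing $\det f'(\theta)$ explicitly. Your factorization $f_1=c(H)a^2$, $f_2=c(H)a^2\,2^{2H}$ is just a tidier bookkeeping device --- your determinant $-2\log 2\cdot c(H)^2 2^{2H}a^2$ coincides exactly with the paper's expression $-a^2h^{4H}2^{2H+3}\log 2\,(2^{2H}-1)^2/\bigl((2H+1)^2(H+1)^2\bigr)$.
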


\begin{proof}
We apply delta method \cite{Kubilinus2017} for function $q = f^{(-1)}$ and sequence $\bar \xi_N$ for which we proved asymptotic normality in Theorem \ref{asnorm}.
To apply the delta method, it is necessary to compute the derivative matrix of $q$ and verify its nonsingularity. However, due to the complexity of $q$, we instead calculate the derivative matrix of $f$ at $\theta$, since by the inverse function theorem, $q' = (f')^{-1}$. Showing that $f'(\theta)$ is nonsingular thus ensures that $q'$ exists and is nonsingular, allowing the application of the delta method.
Thus, we first evaluate the Jacobian matrix of $f(\theta)$. We have
$$f_1(\theta)=
\frac{2 h^{2H}}{(2H+1)(H+1)} a^2 \bigl(2^{2H} - 1 \bigr),
$$
hence
\begin{equation}
\frac{\partial f_1}{\partial H}=
2 a^2 h^{2H}\frac{
\left[
2 (2^{2H} - 1) \log h 
+  2^{2H+1} \log2
\right]
(2H+1)(H+1)
-
(2^{2H} - 1) (4H+3)
}{
\bigl((2H+1)(H+1)\bigr)^2
}
\end{equation}
and
\begin{equation}
   \frac{\partial f_1}{\partial a^2}= \frac{2 h^{2H}}{(2H+1)(H+1)} \bigl(2^{2H} - 1 \bigr) .
 \end{equation}
Consider now
$$
f_2(\theta) = \frac{2 a^2 h^{2H} 2^{2H} (2^{2H}-1)}{(2H+1)(H+1)}.
$$
Let
$$
N(\theta) = 2 a^2 h^{2H} 2^{2H} (2^{2H}-1), \qquad 
D(\theta) = (2H+1)(H+1).
$$
We have
\begin{gather*}
\frac{\partial D}{\partial H} = \big((2H+1)(H+1)\big)
= 2(H+1) + (2H+1) = 4H + 3,
\\
 \frac{\partial N}{\partial H}= 2 a^2 \left[
h^{2H} 2^{2H} (2 \log h + 2 \log 2)(2^{2H}-1)
+ h^{2H} 2^{2H} \big(2 \log 2 \cdot 2^{2H}\big)
\right],
\end{gather*}
whence,
\begin{multline*}
\frac{\partial f_2}{\partial H} = 
\frac{2 a^2 h^{2H} 2^{2H}}
{((2H+1)(H+1))^2}
\\
\times\left[
\big((2\log h+2\log 2)(2^{2H}-1)+2\log 2 \cdot 2^{2H}\big)(2H+1)(H+1)
-(2^{2H}-1)(4H+3)
\right]
.
\end{multline*}
and
\begin{equation}
\frac{\partial f_2}{\partial a^2}= \frac{h^{2H} 2^{2H+1} (2^{2H}-1)}{(2H+1)(H+1)}.
\end{equation}
By computing the determinant of the Jacobian matrix
$f' = \left(\begin{smallmatrix}
\partial f_1/\partial H & \partial f_1/\partial a^2 \\
\partial f_2/\partial H & \partial f_2/\partial a^2
\end{smallmatrix}\right)$, we obtain
$$
\det\big(f'({\theta})\big) = 
- \frac{a^2  h^{4H} 2^{2H+3}  \log 2 \big(2^{2H} - 1\big)^2}{(2H+1)^2(H+1)^2}.
$$

It is easy to see that the determinant is strictly negative for all positive $a$, $h$, and $H$. This guarantees the invertibility of the matrix $f'(\theta)$ in the considered parameter domain.
The application of the delta method concludes the proof.
\end{proof}

\subsection{Asymptotic normality in the model with two nifBms}
We now extend Theorems~\ref{asnorm} and \ref{deltamethod} to the case of a linear combination of two processes. The proof is omitted, as it follows the same arguments as in the two-dimensional case, differing only in the increased algebraic complexity.  

A strongly consistent estimator of the parameter $\theta = (H_1, H_2, a^2, b^2)$ was established in Theorem~\ref{stat}. Analogously to the model with a single nifBm, we now consider a modified estimator $\widehat\theta_N$, where the statistics $\xi^1_N$, $\xi^2_N$, and $\xi^4_N$ are replaced by $\xi^1_{8N}$, $\xi^2_{4N}$, and $\xi^4_{2N}$, respectively; see Remark~\ref{substit}. Accordingly, we introduce  
\[
\bar \xi_N=(\xi^1_{8N}, \xi^2_{4N}, \xi^4_{2N}, \xi^8_N)^\top, 
\qquad 
\bar \eta=(\mathbb{E}(\xi^1_1), \mathbb{E}(\xi^2_1), \mathbb{E}(\xi^4_1), \mathbb{E}(\xi^8_1))^\top =: f(\theta),
\]
where the expectations $\mathbb{E}(\xi^j_1)$, $j=1,2,4,8$, are given by~\eqref{manyj}.  

The following results establish the asymptotic normality of $\bar \xi_N$ and $\widehat \theta_N$ in this framework. We do not provide the explicit form of the asymptotic covariance matrix $\widetilde \Sigma$, as its derivation is analogous to that in the single-fBm case but leads to extremely cumbersome expressions.

\begin{theorem}\label{asnorm2}
Let $0<H_1<H_2<\frac{3}{4}$. Then $\bar\xi_N$ is asymptotically normal, that is
\begin{equation*}
 \sqrt{N}(\bar \xi_N-\bar \eta)   
=\sqrt{N}\begin{pmatrix}
\xi^1_{8N} - \mathbb{E}(\xi^1_1) \\
\xi^2_{4N} - \mathbb{E}(\xi^2_1) \\
\xi^4_{2N} - \mathbb{E}(\xi^4_1) \\
\xi^8_N - \mathbb{E}(\xi^8_1) 
\end{pmatrix}
\xrightarrow{d}
\mathcal{N}\bigl(0,\widetilde \Sigma\bigr),
\quad \text{as } N\to\infty.
\end{equation*}
Moreover, the estimator $\widehat \theta_N=(\widehat H_{1,N},\widehat H_{2,N},\widehat a^2_N,\widehat b^2_N)^\top$ is asymptotically normal, that is
    $$\sqrt N(\widehat \theta_N -\theta)=
\sqrt{N} 
\begin{pmatrix}
\widehat H_{1,N} - H \\
\widehat H_{2,N} - H \\
\widehat a^2_N - a^2 \\
\widehat b^2_N - b^2 
\end{pmatrix}
\xrightarrow{d} \mathcal{N}\bigl(0, \Sigma_0\bigr),
\quad \text{as } N\to\infty.
$$
with the asymptotic covariance matrix $\Sigma_0$ given by
$$
\Sigma_0 = \bigl(f'(\theta)\bigr)^{-1} \, \widetilde \Sigma \, 
(\bigl(f'(\theta)\bigr)^{-1})^\top,
$$
where $f'$ is the Jacobian matrix of $f$.
\end{theorem}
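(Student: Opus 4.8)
The plan is to mirror the two proofs given for the single-nifBm model (Theorems~\ref{asnorm} and \ref{deltamethod}): first establish the joint asymptotic normality of the statistics $\bar\xi_N$ via a multivariate Breuer--Major theorem, then transfer it to $\widehat\theta_N$ through the delta method. The new difficulties are essentially algebraic bookkeeping caused by the larger dimension.

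First I would reduce all four statistics to quadratic functionals of a single stationary Gaussian vector sequence built from the finest increments $\Delta \widetilde X^h_m$. Using \eqref{XjhviaXh} and generalizing \eqref{realizations}, each $\Delta \widetilde X^{jh}_k$ is a fixed (zero-sum) linear combination of consecutive base increments $\Delta \widetilde X^h_m$. Since the coarsest step is $j=8$, I would group the base increments into overlapping blocks whose length is dictated by $j=8$ and set
\[
Z_k = \bigl(\Delta \widetilde X^h_{8k}, \Delta \widetilde X^h_{8k+1}, \dots, \Delta \widetilde X^h_{8k+14}\bigr)^\top, \qquad k \in \bN_0,
\]
which is a centered stationary Gaussian sequence by Proposition~\ref{lemmastat}. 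Re-indexing the sums in $\xi^1_{8N}$, $\xi^2_{4N}$, $\xi^4_{2N}$, $\xi^8_N$ so that the outer index runs from $0$ to $N-1$, each statistic becomes $\frac1N\sum_{k=0}^{N-1}F_j(Z_k)$ for an explicit quadratic form $F_j$.

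Next I would apply the Cramér--Wold device: for a vector $(\alpha_1,\alpha_2,\alpha_3,\alpha_4)$ the corresponding linear combination of the four statistics equals $\frac1N\sum_k F_\alpha(Z_k)$ with $F_\alpha$ a quadratic form, hence of Hermite rank $2$ after centering (Example~\ref{ex:quad-form}). To invoke the multivariate Breuer--Major theorem (Theorem~\ref{thm:Arcones}) it then suffices to check that every cross-covariance $n\mapsto\cov(Z_1^{(p)},Z_{1+n}^{(l)})$ is square-summable. By \eqref{cov_sumoftwo_increments} this covariance equals $a^2h^{2H_1}\gamma(H_1,8n+p-l)+b^2h^{2H_2}\gamma(H_2,8n+p-l)$, so square-summability reduces to that of $\gamma(H_i,\cdot)$, which by Lemma~\ref{gammas} and Remark~\ref{rem:convCov} holds precisely because $H_1,H_2<\tfrac34$, the standing hypothesis. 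This yields the first display, with $\widetilde\Sigma$ identified exactly as in Step~2 of the proof of Theorem~\ref{asnorm}, now using \eqref{cov-squares}, \eqref{cov-deltaX} and the larger collection of $\gamma$-shifts.

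Finally I would pass to $\widehat\theta_N$ by the delta method. By construction $\widehat\theta_N=q(\bar\xi_N)$, where $q=f^{(-1)}$ is the explicit inverse map produced in the proof of Theorem~\ref{stat}; on the domain $\{0<H_1<H_2<\tfrac34,\ a^2,b^2>0\}$ all denominators occurring in $q$ are nonzero, so $q$ is $C^1$ at $\bar\eta=f(\theta)$. Writing $q'=(f')^{-1}$ via the inverse function theorem, the delta method combined with the first part gives the stated limit with $\Sigma_0=(f'(\theta))^{-1}\widetilde\Sigma\,((f'(\theta))^{-1})^\top$. The only genuinely new point, and the main obstacle, is to verify that the $4\times4$ Jacobian $f'(\theta)$ of
\[
\eta_j=A_1a^2(2^{2H_1}-1)j^{2H_1}+A_2b^2(2^{2H_2}-1)j^{2H_2},\qquad j\in\{1,2,4,8\},
\]
is nonsingular on this domain. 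I expect to establish this by direct evaluation of $\det f'(\theta)$, showing it does not vanish whenever $H_1\neq H_2$ and $a^2,b^2>0$; this is the analogue of the explicit strictly negative determinant obtained in the proof of Theorem~\ref{deltamethod}, but the resulting expression is considerably more cumbersome, which is exactly why the detailed computation is omitted in the statement above.
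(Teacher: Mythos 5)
Your proposal is correct and follows essentially the same route the paper intends: the paper omits the proof of Theorem~\ref{asnorm2}, stating that it follows the same arguments as Theorems~\ref{asnorm} and~\ref{deltamethod}, and your reduction of all four statistics to quadratic forms of a stationary $15$-dimensional Gaussian block sequence, the Cram\'er--Wold device with Theorem~\ref{thm:Arcones} (Hermite rank $2$, square-summability of $\gamma(H_i,\cdot)$ for $H_i<\tfrac34$), and the delta method via $q=f^{(-1)}$ is precisely that extension. The Jacobian nonsingularity you defer is indeed true and provable on the stated domain (e.g., factoring through $(x,y,A,B)$ with $x=2^{2H_1}$, $y=2^{2H_2}$ it reduces to a confluent-Vandermonde-type determinant that vanishes only when $x=y$), so your proposal has no gap beyond what the paper itself leaves implicit.
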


\section{Computational analysis}
\label{sec:simul}

\looseness=-1 This section presents the results of our computational analysis.  
In Subsection~\ref{ssec:simul-drift}, we examine the behavior of the drift parameter estimators for the model with a single nifBm process (equation \eqref{driftOneProc}), and then extend the estimation procedure to the model with two nifBm processes (equation \eqref{discr-model}).  

In Subsection~\ref{ssec:simul-hurst}, we conduct a simulation study to assess the performance of the estimators for the Hurst parameters and the diffusion coefficients.  
This analysis is divided into two parts. First, we evaluate the estimators $\widehat{H}_N$ (equation~\eqref{nivBmone}) and $\widehat{a}_N^2$ (equation~\eqref{est_a}) in the case $b=0$, which corresponds to the model with one nifBm. Second, we validate the estimators stated in Theorem~\ref{stat} for the vector parameter \(\theta=(H_1,H_2,a^2,b^2)^\top\).

For all estimators, we report their sample means and empirical standard deviations, along with theoretical standard deviations where available. In the tables, we use the notation $\mean{\cdot}$, $\sige{\cdot}$, and $\sigt{\cdot}$ to denote the mean, empirical standard deviation, and theoretical standard deviation, respectively.

\subsection{Simulation study for the drift parameter estimators}
\label{ssec:simul-drift}

We investigate the performance of the drift estimators for both models: the one driven by a single nifBm and the one driven by two nifBm processes. Specifically, we fix the true parameter value $\mu = 4$ and consider the function
$G(t) = 5\cos t - e^{-4t} + 2t^2$,
which satisfies condition~\eqref{assumpMLEcons2} for arbitrary values of the Hurst exponents. The diffusion coefficients are set to $a=b=1$.

The sample paths of the processes are generated using the Cholesky decomposition method \cite{Coeurjolly2000}, which exploits the covariance structure of the underlying Gaussian processes.
The implementation is carried out in \textsc{Matlab} using the built-in \textsc{chol} function.

\paragraph{Drift estimation in a model with one nifBm.}  
We begin with the estimation of the drift parameter in the model \eqref{driftOneProc}, which involves a single nifBm with drift. The estimators $\widehat{\mu}_N$ and $\widetilde{\mu}_N$, defined in \eqref{drift_est1} and \eqref{drift-last1}, respectively, are evaluated for several values of the Hurst parameter, $H \in \{0.1, 0.3, 0.5, 0.7, 0.9\}$, and for two values of the averaging window, $h \in \{2, 2^2\}$.  

The discrete-time increments process,
\[
\Delta X_k^h = X_{(k+1)h}^h - X_{kh}^h,
\]
is simulated using the Cholesky method, based on the analytical expression of the covariance matrix given in equation~\eqref{gammy}. Once realizations of $\Delta X_k^h$ are generated, the increments of the function $G(t)$ are added to obtain the process $\Delta Y^{(N)}$.  

For each configuration $(H,h)$, we conduct 100 simulations. For every simulated trajectory, the estimator $\widehat{\mu}_N$ is computed according to \eqref{drift_est1} for various values of the number of observations $N$. The empirical mean and standard deviation are then calculated across the replications. In addition, the theoretical standard deviations are derived from formula~\eqref{variance_drift}.  

We also examine the performance of the alternative estimator $\widetilde{\mu}_N$ by comparing its empirical standard deviation, obtained from simulations, with the corresponding theoretical value. The latter is given by the square root of the variance of $\widetilde{\mu}_N$, computed as
\begin{align}
\Var(\widetilde{\mu}_N) 
= \frac{a^2 h^{2H}}{(G_N^h)^2(2H+1)(2H+2)}
\Big[(N+1)^{2H+2} + (N-1)^{2H+2} - 2N^{2H+2} - 2\Big],
\end{align}
which follows from Proposition~\ref{prop:drift-last-oneproc}, using \eqref{covfar} and \eqref{var1}.

\begin{table}
\centering
\footnotesize
\setlength{\tabcolsep}{4pt}
\caption{Drift estimators $\widehat{\mu}_N$ and $\widetilde{\mu}_N$ in the model with one nifBm.}
\label{tab:drift1}
\begin{tabular}{llcccccc}
\toprule
& & \multicolumn{3}{c}{$h=2$} & \multicolumn{3}{c}{$h=2^2$} \\
\cmidrule(lr){3-5}\cmidrule(lr){6-8}
$H$ & & $N=2^3$ & $N=2^5$ & $N=2^7$ & $N=2^3$ & $N=2^5$ & $N=2^7$ \\
\midrule
\multirow{6}{*}{0.1}
& $\mean{\widehat{\mu}}$     & 3.99976 & 4.00004 & 4.00000 & 3.99945 & 4.00001 & 4.00000 \\
& $\sige{\widehat{\mu}}$     & 0.00510 & 0.00053 & 0.00003 & 0.00561 & 0.00051 & 0.00000 \\
& $\sigt{\widehat{\mu}}$     & 0.00559 & 0.00048 & 0.00004 & 0.00599 & 0.00052 & 0.00000 \\
& $\mean{\widetilde{\mu}}$   & 4.00045 & 3.99995 & 4.00001 & 4.00095 & 4.00004 & 3.99999 \\
& $\sige{\widetilde{\mu}}$   & 0.00651 & 0.00045 & 0.00004 & 0.00568 & 0.00057 & 0.00005 \\
& $\sigt{\widetilde{\mu}}$   & 0.00733 & 0.00058 & 0.00004 & 0.00786 & 0.00062 & 0.00005 \\
\midrule
\multirow{6}{*}{0.3}
& $\mean{\widehat{\mu}}$     & 4.00152 & 3.99994 & 3.99998 & 3.99881 & 3.99994 & 4.00002 \\
& $\sige{\widehat{\mu}}$     & 0.01344 & 0.00140 & 0.00013 & 0.01625 & 0.00166 & 0.00018 \\
& $\sigt{\widehat{\mu}}$     & 0.01279 & 0.00141 & 0.00014 & 0.01575 & 0.00173 & 0.00017 \\
& $\mean{\widetilde{\mu}}$   & 3.99817 & 3.99969 & 3.99991 & 3.99663 & 3.99942 & 3.99997 \\
& $\sige{\widetilde{\mu}}$   & 0.01445 & 0.00165 & 0.00015 & 0.01417 & 0.00214 & 0.00019 \\
& $\sigt{\widetilde{\mu}}$   & 0.01676 & 0.00164 & 0.00016 & 0.02063 & 0.00203 & 0.00020 \\
\midrule
\multirow{6}{*}{0.5}
& $\mean{\widehat{\mu}}$     & 3.99952 & 3.99936 & 4.00011 & 3.99924 & 3.99952 & 4.00002 \\
& $\sige{\widehat{\mu}}$     & 0.02284 & 0.00311 & 0.00044 & 0.03116 & 0.00431 & 0.00061 \\
& $\sigt{\widehat{\mu}}$     & 0.02269 & 0.00320 & 0.00042 & 0.03209 & 0.00452 & 0.00059 \\
& $\mean{\widetilde{\mu}}$   & 3.99907 & 3.99925 & 4.00005 & 3.99831 & 3.99984 & 4.00005 \\
& $\sige{\widetilde{\mu}}$   & 0.02269 & 0.00338 & 0.00049 & 0.03375 & 0.00579 & 0.00071 \\
& $\sigt{\widetilde{\mu}}$   & 0.03077 & 0.00388 & 0.00049 & 0.04351 & 0.00548 & 0.00069 \\
\midrule
\multirow{6}{*}{0.7}
& $\mean{\widehat{\mu}}$     & 3.99175 & 4.00051 & 3.99977 & 4.00518 & 4.00263 & 4.00031 \\
& $\sige{\widehat{\mu}}$     & 0.03442 & 0.00606 & 0.00106 & 0.05696 & 0.01022 & 0.00185 \\
& $\sigt{\widehat{\mu}}$     & 0.03659 & 0.00665 & 0.00114 & 0.05944 & 0.01080 & 0.00184 \\
& $\mean{\widetilde{\mu}}$   & 4.00987 & 3.99994 & 3.99991 & 4.00366 & 3.99717 & 3.99962 \\
& $\sige{\widetilde{\mu}}$   & 0.04859 & 0.00904 & 0.00154 & 0.07372 & 0.01335 & 0.00221 \\
& $\sigt{\widetilde{\mu}}$   & 0.05437 & 0.00895 & 0.00148 & 0.08833 & 0.01453 & 0.00241 \\
\midrule
\multirow{6}{*}{0.9}
& $\mean{\widehat{\mu}}$     & 3.99896 & 4.00091 & 3.99983 & 3.99089 & 4.00054 & 4.00060 \\
& $\sige{\widehat{\mu}}$     & 0.04943 & 0.01187 & 0.00243 & 0.08474 & 0.02017 & 0.00450 \\
& $\sigt{\widehat{\mu}}$     & 0.04602 & 0.01090 & 0.00244 & 0.08588 & 0.02034 & 0.00455 \\
& $\mean{\widetilde{\mu}}$   & 3.99131 & 3.99958 & 4.00013 & 3.99674 & 3.99842 & 3.99953 \\
& $\sige{\widetilde{\mu}}$   & 0.07886 & 0.01719 & 0.00436 & 0.16066 & 0.03891 & 0.00819 \\
& $\sigt{\widetilde{\mu}}$   & 0.09515 & 0.02057 & 0.00449 & 0.17756 & 0.03839 & 0.00837 \\
\bottomrule
\end{tabular}
\end{table}

Table~\ref{tab:drift1} reports the means, empirical and theoretical standard deviations of both estimators.
The simulation results align closely with our theoretical findings. Both $\widehat{\mu}_N$ and $\widetilde{\mu}_N$ are essentially unbiased, with empirical means near the true value of drift $\mu = 4$. Empirical standard deviations match the theoretical values and decrease rapidly with increasing sample size $N$, confirming the consistency and asymptotic normality of both estimators.

Overall, $\widehat{\mu}_N$ is slightly more efficient than $\widetilde{\mu}_N$, particularly for small samples and large Hurst exponents $H$. For large $N$, both estimators achieve high precision, with empirical variability closely following theoretical predictions. The alternative estimator $\widetilde{\mu}_N$ shows greater variability, especially for $H$ near one and small $N$, reflecting the loss of efficiency from using only two observations. Nevertheless, both theoretical and empirical standard deviations decrease consistently as $N$ increases.

In summary, the simulations confirm the consistency of both estimators while highlighting the superior efficiency of the maximum likelihood approach.

\paragraph{Drift estimation in a model with two nifBm}  
Within the same framework, we now consider model~\eqref{discr-model1}, which includes two independent nifBms and a drift term. We investigate the performance of the MLE $\widehat{\mu}_N$ of the drift parameter $\mu$ defined in equation~\eqref{drift_est}. In addition, we examine the alternative estimator $\widetilde{\mu}_N$, introduced in equation~\eqref{est-drift-last}, which relies on two observations.  

\looseness=-1 To evaluate the accuracy of both methods, we compare the empirical standard deviations with their theoretical counterparts. For $\widehat{\mu}_N$, a closed-form variance expression is provided in equation~\eqref{variance_drift2}, whereas for $\widetilde{\mu}_N$ the variance can be computed explicitly as $\Var(\widetilde{\mu}_N) = a^2 T_1 + b^2 T_2$, where
\[
T_i = \frac{h^{2H_i}}{(G_N^h)^2(2H_i+1)(2H_i+2)}\!
\left[(N+1)^{2H_i+2}+(N-1)^{2H_i+2}-2N^{2H_i+2}-2\right], 
\quad i=1,2.
\]

As in the single-nifBm case, we consider $N \in \{2^3,2^5,2^7\}$ observation points and two averaging windows, $h \in \{2,2^2\}$. For each configuration, 100 independent simulations are performed.
\begin{table}
\centering
\footnotesize
\setlength{\tabcolsep}{4pt}
\caption{Drift estimators $\widehat{\mu}_N$ and $\widetilde{\mu}_N$ in the model with two nifBms.}
\label{tab:drift_two_nifbm}
\begin{tabular}{lllcccccc}
\toprule
& &  & \multicolumn{3}{c}{$h=2$} & \multicolumn{3}{c}{$h=2^2$} \\
\cmidrule(lr){4-6}\cmidrule(lr){7-9}
$H_1$ & $H_2$ & & $N=2^3$ & $N=2^5$ & $N=2^7$ & $N=2^3$ & $N=2^5$ & $N=2^7$ \\
\midrule
\multirow{6}{*}{0.1} & \multirow{6}{*}{0.3} 
& $\mean{\widehat{\mu}}$ & 3.99938 & 4.00001 & 4.00001 & 3.99951 & 3.99968 & 4.00000 \\
& & $\sige{\widehat{\mu}}$ & 0.01306 & 0.00167 & 0.00015 & 0.01807 & 0.00165 & 0.00017 \\
& & $\sigt{\widehat{\mu}}$ & 0.01399 & 0.00149 & 0.00015 & 0.01687 & 0.00181 & 0.00017 \\
& & $\mean{\widetilde{\mu}}$ & 3.99953 & 4.00016 & 3.99998 & 4.00385 & 4.00009 & 4.00001 \\
& & $\sige{\widetilde{\mu}}$ & 0.00819 & 0.00143 & 0.00016 & 0.02082 & 0.00226 & 0.00022 \\
& & $\sigt{\widetilde{\mu}}$ & 0.00837 & 0.00175 & 0.00017 & 0.02211 & 0.00212 & 0.00020 \\
\midrule
\multirow{6}{*}{0.1} & \multirow{6}{*}{0.5}
& $\mean{\widehat{\mu}}$ & 3.99824 & 4.00003 & 4.00004 & 3.99953 & 3.99963 & 4.00001 \\
& & $\sige{\widehat{\mu}}$ & 0.02263 & 0.00320 & 0.00037 & 0.03172 & 0.00472 & 0.00061 \\
& & $\sigt{\widehat{\mu}}$ & 0.02349 & 0.00325 & 0.00042 & 0.03275 & 0.00456 & 0.00059 \\
& & $\mean{\widetilde{\mu}}$ & 3.99865 & 3.99956 & 4.00010 & 4.00025 & 3.99902 & 4.00005 \\
& & $\sige{\widetilde{\mu}}$ & 0.02565 & 0.00356 & 0.00051 & 0.03316 & 0.00515 & 0.00069 \\
& & $\sigt{\widetilde{\mu}}$ & 0.03163 & 0.00392 & 0.00049 & 0.04421 & 0.00552 & 0.00069 \\
\midrule
\multirow{6}{*}{0.3} & \multirow{6}{*}{0.5}
& $\mean{\widehat{\mu}}$ & 3.99990 & 4.00060 & 3.99993 & 3.99817 & 3.99997 & 3.99993 \\
& & $\sige{\widehat{\mu}}$ & 0.02745 & 0.00850 & 0.00045 & 0.03636 & 0.01463 & 0.00060 \\
& & $\sigt{\widehat{\mu}}$ & 0.02615 & 0.00960 & 0.00044 & 0.03587 & 0.01326 & 0.00067 \\
& & $\mean{\widetilde{\mu}}$ & 3.99882 & 3.99976 & 4.00004 & 3.99829 & 3.99982 & 3.99997 \\
& & $\sige{\widetilde{\mu}}$ & 0.02907 & 0.00385 & 0.00046 & 0.03903 & 0.00551 & 0.00070 \\
& & $\sigt{\widetilde{\mu}}$ & 0.03503 & 0.00421 & 0.00051 & 0.04815 & 0.00585 & 0.00072 \\
\midrule
\multirow{6}{*}{0.3} & \multirow{6}{*}{0.7}
& $\mean{\widehat{\mu}}$ & 3.99509 & 4.00073 & 4.00010 & 4.00340 & 4.00009 & 4.00010 \\
& & $\sige{\widehat{\mu}}$ & 0.03863 & 0.00717 & 0.00116 & 0.06232 & 0.01110 & 0.00207 \\
& & $\sigt{\widehat{\mu}}$ & 0.03947 & 0.00687 & 0.00115 & 0.06226 & 0.01101 & 0.00186 \\
& & $\mean{\widetilde{\mu}}$ & 3.99228 & 4.00021 & 3.99995 & 4.00442 & 4.00138 & 4.00025 \\
& & $\sige{\widetilde{\mu}}$ & 0.05530 & 0.00829 & 0.00159 & 0.07430 & 0.01515 & 0.00222 \\
& & $\sigt{\widetilde{\mu}}$ & 0.05690 & 0.00910 & 0.00149 & 0.09071 & 0.01468 & 0.00241 \\
\midrule
\multirow{6}{*}{0.5} & \multirow{6}{*}{0.7} & $\mean{\widehat{\mu}}$ & 4.00221 & 4.00056 & 4.00014 & 3.99529 & 3.99930 & 4.00007 \\
& & $\sige{\widehat{\mu}}$ & 0.04494 & 0.00830 & 0.00121 & 0.06574 & 0.01188 & 0.00183 \\
& & $\sigt{\widehat{\mu}}$ & 0.04352 & 0.00745 & 0.00122 & 0.06821 & 0.01181 & 0.00194 \\
& & $\mean{\widetilde{\mu}}$ & 3.99382 & 3.96247 & 4.00011 & 4.00016 & 4.00095 & 3.99986 \\
& & $\sige{\widetilde{\mu}}$ & 0.04386 & 0.01974 & 0.00164 & 0.00926 & 0.01521 & 0.00228 \\
& & $\sigt{\widetilde{\mu}}$ & 0.06248 & 0.00127 & 0.00156 & 0.00975 & 0.01554 & 0.00250 \\
\bottomrule
\end{tabular}
\end{table}

Table~\ref{tab:drift_two_nifbm} reports the simulation results for the drift coefficient $\mu$ across different pairs of Hurst parameters $(H_1,H_2)$. It presents the sample means of both estimators along with their empirical and theoretical standard deviations.

The results for the two-process model (Table~\ref{tab:drift_two_nifbm}) are consistent with those of the single-process case (Table~\ref{tab:drift1}). In both settings, the estimators are essentially unbiased, with empirical means very close to the true drift. Regarding variability, the MLE $\widehat{\mu}_N$ is systematically more efficient than the alternative estimator $\widetilde{\mu}_N$, particularly for larger sample sizes. Only in small-$N$ scenarios two estimators exhibit similar performance; as $N$ increases, the superiority of $\widehat{\mu}_N$ becomes evident across all parameter configurations.\enlargethispage{5pt}

This pattern is common to both the single- and two-process models, although the presence of two fractional components leads to higher overall variability, especially when at least one Hurst parameter is large. Thus, while both estimators are consistent, the MLE provides a more reliable and efficient estimation method, with the advantage most pronounced in smoother regimes and for larger samples.

\subsection{Simulation study for the Hurst parameters and diffusion coefficients}
\label{ssec:simul-hurst}

In this section, we investigate the performance of simultaneous estimators for the Hurst exponents and diffusion coefficients for models with one and two nifBm processes. We fix the values of $a$ and $b$, while considering several values of the Hurst exponents.

\paragraph{Estimation of the Hurst parameter and diffusion coefficient in the model with one nifBm.}  
We start with the model $Y_k^h = a X^{h,H}_{kh}$,  where the value of $a$ is set to 1, involving a single nifBm without drift, and evaluate the estimator \(\widehat{\theta}_N=(\widehat{H}_N, \widehat{a}_N^2)\) defined in \eqref{nivBmone} and~\eqref{est_a}.  

For a fixed step size $h$, we generate the increments \(\Delta X^{h}\) using the Cholesky method based on the covariance structure. The realization of \(\Delta X^{2h}\) is then computed according to \eqref{realizations}, so that it incorporates half of the increments present in \(\Delta X^{h}\). From these processes, we compute the statistics \(\xi_{2N}^1\) and \(\xi_N^2\) to study the behavior of \(\widehat{\theta}_N=(\widehat{H}_N, \widehat{a}_N^2)\) introduced in Theorem~\ref{est1H}.  

For each combination of $H$ and $h$, we simulate a trajectory and apply the estimators to obtain estimates of $H$ and $a^2$. We report the empirical mean and standard deviation over 100 replications and, using Theorem~\ref{deltamethod}, the asymptotic theoretical standard deviation.  

Table~\ref{tab:1H_est} presents the performance of the estimators $\widehat H_N$ and $\widehat a^2_N$ for $H \in \{0.1, 0.3, 0.5, 0.7\}$, $h \in \{2, 2^2, 2^4\}$, and sample sizes $N = 2^n$, $n \in \{6, 8, 10, 12\}$. The estimator $\widehat H_N$ shows almost unbiased behavior in all Hurst parameters, with only slight deviations for small $N$ and extreme $H$ values, particularly $H = 0.1$. Its empirical standard deviation $\sige{\widehat H_N}$ decreases rapidly with $N$ and closely matches the theoretical standard deviation $\sigt{\widehat H_N}$ for moderate and large samples, confirming the accuracy of the theoretical variance formula. The increment size $h$ exerts minimal influence on $\widehat H_N$, slightly affecting variability only for small $N$. 

In contrast, the diffusion coefficient estimator $\widehat a^2_N$ is considerably more sensitive to both sample size $N$ and averaging window $h$. For small $N$ and large $h$, it exhibits pronounced bias and elevated variability, especially for small Hurst indices ($H = 0.1$ or $0.3$). As $N$ increases, both bias and variability diminish and the empirical standard deviation converges toward the theoretical value. 

These results indicate that $\widehat H_N$ provides stable and reliable estimates even for moderate $N$, whereas accurate estimation of $\widehat a^2_N$ requires sufficiently large samples and moderate values of $h$.

\begin{table}
\centering
\footnotesize
\setlength{\tabcolsep}{3.1pt}
\caption{Estimators $\widehat H_N$ and $\widehat a^2_N$ in the model with one nifBm.}
\label{tab:1H_est}
\begin{tabular}{lcccccccccccc}
\toprule
 & \multicolumn{4}{c}{$h=2$} & \multicolumn{4}{c}{$h=2^2$} & \multicolumn{4}{c}{$h=2^4$} \\
\cmidrule(lr){2-5}\cmidrule(lr){6-9}\cmidrule(lr){10-13}
 & $N=2^6$ & $2^8$ & $2^{10}$ & $2^{12}$ 
 & $2^6$ & $2^8$ & $2^{10}$ & $2^{12}$ 
 & $2^6$ & $2^8$ & $2^{10}$ & $2^{12}$ \\
\midrule
\multicolumn{13}{c}{$H=0.1$} \\
\midrule
$\mean{\widehat H}$   & 0.0746 & 0.0834 & 0.1022 & 0.1000 & 0.0952 & 0.1002 & 0.0967 & 0.1026 & 0.1096 & 0.1055 & 0.1001 & 0.1015 \\
$\sige{\widehat H}$            & 0.1713 & 0.0833 & 0.0352 & 0.0198 & 0.1483 & 0.0669 & 0.0362 & 0.0195 & 0.1635 & 0.0752 & 0.0357 & 0.0196 \\
$\sigt{\widehat H}$            & 0.1087 & 0.0544 & 0.0272 & 0.0136 & 0.1087 & 0.0544 & 0.0272 & 0.0136 & 0.1087 & 0.0544 & 0.0272 & 0.0136 \\
$\mean{\widehat a^2}$ & 2.0548 & 1.7555 & 1.2109 & 1.0352 & 2.4617 & 2.0531 & 1.7098 & 1.0147 & 3.9263 & 0.8146 & 1.8697 & 1.0452 \\
$\sige{\widehat a^2}$          & 16.830 & 7.0895 & 0.8605 & 0.0867 & 26.357 & 7.1395 & 3.1218 & 0.2407 & 31.873 & 5.3269 & 6.3387 & 0.3355 \\
$\sigt{\widehat a^2}$          & 1.0176 & 0.5088 & 0.2544 & 0.0718 & 1.1663 & 0.5831 & 0.2916 & 0.1458 & 1.4649 & 0.7325 & 0.3662 & 0.1831 \\
\midrule
\multicolumn{13}{c}{$H=0.3$} \\
\midrule
$\mean{\widehat H}$   & 0.2715 & 0.2961 & 0.3000 & 0.3004 & 0.3067 & 0.2948 & 0.3052 & 0.2993 & 0.2875 & 0.2934 & 0.2980 & 0.3004 \\
$\sige{\widehat H}$            & 0.1325 & 0.6090 & 0.0320 & 0.0159 & 0.1377 & 0.0594 & 0.0327 & 0.0153 & 0.1435 & 0.0622 & 0.0358 & 0.0141 \\
$\sigt{\widehat H}$            & 0.1194 & 0.0597 & 0.0298 & 0.0149 & 0.1194 & 0.0597 & 0.0298 & 0.0149 & 0.1194 & 0.0597 & 0.0298 & 0.0149 \\
$\mean{\widehat a^2}$ & 1.1625 & 1.0719 & 1.0082 & 1.0023 & 4.2266 & 1.0948 & 0.9937 & 1.0066 & 0.8646 & 1.2299 & 1.0609 & 1.0041 \\
$\sige{\widehat a^2}$          & 4.3630 & 0.3158 & 0.1328 & 0.0597 & 9.9780 & 0.3462 & 0.1743 & 0.0768 & 7.3627 & 0.7895 & 0.3227 & 0.1095 \\
$\sigt{\widehat a^2}$          & 0.4354 & 0.2177 & 0.1089 & 0.0544 & 0.5904 & 0.2952 & 0.1476 & 0.0738 & 0.9115 & 0.4557 & 0.2279 & 0.1139 \\
\midrule
\multicolumn{13}{c}{$H=0.5$} \\
\midrule
$\mean{\widehat H}$   & 0.4826 & 0.5024 & 0.4990 & 0.4998 & 0.4936 & 0.4957 & 0.4952 & 0.5002 & 0.4981 & 0.4977 & 0.5031 & 0.4979 \\
$\sige{\widehat H}$            & 0.1175 & 0.0523 & 0.0311 & 0.0121 & 0.1161 & 0.0529 & 0.0277 & 0.0118 & 0.1187 & 0.0548 & 0.0252 & 0.0125 \\
$\sigt{\widehat H}$            & 0.1104 & 0.0552 & 0.0276 & 0.0138 & 0.1104 & 0.0552 & 0.0276 & 0.0138 & 0.1104 & 0.0522 & 0.0276 & 0.0138 \\
$\mean{\widehat a^2}$ & 1.1302 & 1.0085 & 1.0113 & 1.0023 & 1.2494 & 1.0489 & 1.0231 & 0.9995 & 1.3550 & 1.0952 & 0.9873 & 1.0181 \\
$\sige{\widehat a^2}$          & 0.4158 & 0.1355 & 0.0729 & 0.0311 & 1.2327 & 0.2178 & 0.1098 & 0.0484 & 1.1090 & 0.4620 & 0.1572 & 0.0795 \\
$\sigt{\widehat a^2}$          & 0.3023 & 0.1512 & 0.0759 & 0.0378 & 0.4351 & 0.2176 & 0.1088 & 0.0544 & 0.7252 & 0.3626 & 0.1813 & 0.0906 \\
\midrule
\multicolumn{13}{c}{$H=0.7$} \\
\midrule
$\mean{\widehat H}$   & 0.6804 & 0.6940 & 0.7006 & 0.6976 & 0.6854 & 0.6959 & 0.6970 & 0.7007 & 0.6826 & 0.6943 & 0.6967 & 0.7006 \\
$\sige{\widehat H}$            & 0.0787 & 0.0429 & 0.0232 & 0.0117 & 0.0989 & 0.0514 & 0.0234 & 0.0129 & 0.0864 & 0.0459 & 0.0253 & 0.0139 \\
$\sigt{\widehat H}$            & 0.0902 & 0.0535 & 0.0294 & 0.0157 & 0.0902 & 0.0535 & 0.0294 & 0.0157 & 0.0902 & 0.0535 & 0.0294 & 0.0157 \\
$\mean{\widehat a^2}$ & 1.0750 & 0.9855 & 1.0005 & 0.9987 & 1.0805 & 1.0192 & 1.0101 & 0.9968 & 1.2327 & 1.0766 & 1.0194 & 0.9950 \\
$\sige{\widehat a^2}$          & 0.2457 & 0.0831 & 0.0463 & 0.0215 & 0.3890 & 0.1537 & 0.0564 & 0.0357 & 0.7304 & 0.2976 & 0.1283 & 0.0627 \\
$\sigt{\widehat a^2}$          & 0.3042 & 0.1729 & 0.0934 & 0.0492 & 0.4713 & 0.2664 & 0.1436 & 0.0754 & 0.7583 & 0.4313 & 0.2331 & 0.1227 \\
\bottomrule
\end{tabular}
\end{table}

\paragraph{Hurst parameters and diffusion coefficients in a model with two nifBm.}

For the estimation of the parameter vector $\theta = (H_1, H_2, a^2, b^2)$ in the case of two nifBms,
$\widetilde X^h_k = a X_{kh}^{h,H_1} + b X_{kh}^{h,H_2}$,
we employ the estimator from Theorem \ref{stat}, which relies on the statistics $\xi^j_N$ for $j = 1,2,4,8$.

The one-dimensional simulation procedure is inefficient here, as computing $\Delta \widetilde X^{jh}_k$ via \eqref{realizations} reduces the number of realizations, slowing estimator convergence. While asymptotically valid, this approach requires a larger number of observations  $N$ for reasonable accuracy.

To overcome this, we directly simulate the increments $\Delta \widetilde X^{jh}_k$ using Cholesky decomposition and compute $\xi^j_N$ from the covariance structure:
\begin{equation}\label{cov_sumoftwo_increments1}
\bE\left[\Delta \widetilde X^{jh}_k \, \Delta \widetilde X_{k+n}^{jh}\right] 
= a^2 (jh)^{2H_1} \gamma(H_1,n) + b^2 (jh)^{2H_2} \gamma(H_2,n), \quad n =0,\dots,N, \; j=1,2,4,8.
\end{equation}

Since the covariance differs only by the factor $jh$, it suffices to compute it once for $j=1$, and the other statistics can be obtained by rescaling. This reduces computational cost, eliminates repeated increment calculations, and ensures the same number of observations for all statistics. Stationarity of $\Delta \widetilde X^{jh}_k$ further simplifies computation, as the covariance matrix is Toeplitz.

Table \ref{tab:2H} presents the results for the estimator parameter vector \(\theta = (H_1, H_2, a^2, b^2)\) across five different pairs of \(H_1\) and \(H_2\) with \(H_1 < H_2\), where the scale parameters \(a\) and \(b\) are both set to~2. 
The simulation results indicate that the Hurst estimators \(\widehat{H}_{1,N}\) and \(\widehat{H}_{2,N}\) 
are generally unbiased and become increasingly precise as the sample size \(N\) grows, 
with \(\widehat{H}_{2,N}\) showing particularly robust performance across all scenarios. 
Estimating a very small \(H_1\) in the presence of a larger \(H_2\) proves more challenging, resulting in slight bias and increased variance, especially for small \(N\) and larger increment sizes \(h\). 
The scale parameter estimator \(\widehat{b}_N^2\) remains consistently accurate and stable, 
whereas \(\widehat{a}_N^2\) can exhibit considerable variance and bias under these challenging conditions 
but stabilizes with larger \(N\) or smaller \(h\). Overall, all estimators perform well for moderate 
Hurst parameters (\(H \ge 0.3\)), with precision improving as \(N\) increases and \(h\) decreases, 
highlighting the robustness of \(\widehat{H}_{2,N}\) and \(\widehat{b}_N^2\) and the sensitivity 
of \(\widehat{H}_{1,N}\) and \(\widehat{a}_N^2\) in difficult regimes. Notably, when \(N \ge 2^{10}\), 
all parameters are estimated with high accuracy and stability across all configurations.

\begin{table}
\centering
\footnotesize
\setlength{\tabcolsep}{2.8pt}
\caption{Estimators $\widehat H_{1,N}$, $\widehat H_{2,N}$, $\widehat a^2_N$ and $\widehat b^2_N$ in the model with two nifBms.}
\label{tab:2H}
\begin{tabular}{lcccccccccccc}
\toprule
 & \multicolumn{4}{c}{$h=2$} & \multicolumn{4}{c}{$h=2^2$} & \multicolumn{4}{c}{$h=2^4$} \\[-2.0pt]
\cmidrule(lr){2-5}\cmidrule(lr){6-9}\cmidrule(lr){10-13}
 & $N=2^6$ & $2^8$ & $2^{10}$ & $2^{12}$ & $2^6$ & $2^8$ & $2^{10}$ & $2^{12}$ & $2^6$ & $2^8$ & $2^{10}$ & $2^{12}$ \\[-2.0pt]
\midrule
\multicolumn{13}{c}{$H_1 = 0.1$, $H_2 = 0.3$} \\[-2.0pt]
\midrule
 $\mean{\widehat{H}_1}$ & 0.0904 & 0.0992 & 0.0997 & 0.1000 & 0.0999 & 0.0998 & 0.0997 & 0.0999 & 0.0970 & 0.0995 & 0.0995 & 0.0999 \\
 $\sige{\widehat{H}_1}$ & 0.0204 & 0.0064 & 0.0032 & 0.0016 & 0.0200 & 0.0078 & 0.0037 & 0.0019 & 0.0301 & 0.0108 & 0.0050 & 0.0025 \\
 $\mean{\widehat{H}_2}$ & 0.3018 & 0.3001 & 0.3001 & 0.3000 & 0.3000 & 0.3000 & 0.3001 & 0.3000 & 0.3000 & 0.3000 & 0.3000 & 0.3000 \\
 $\sige{\widehat{H}_2}$ & 0.0047 & 0.0014 & 0.0007 & 0.0004 & 0.0032 & 0.0013 & 0.0007 & 0.0003 & 0.0025 & 0.0010 & 0.0005 & 0.0003 \\
 $\mean{\widehat{a}^2}$ & 4.0911 & 4.1431 & 4.0196 & 4.0080 & 4.2818 & 4.1285 & 4.0211 & 4.0083 & 8.9166 & 4.1510 & 4.0266 & 4.0094 \\
 $\sige{\widehat{a}^2}$ & 0.6753 & 0.4501 & 0.2311 & 0.0947 & 0.9985 & 0.4037 & 0.2288 & 0.0935 & 45.583 & 0.3691 & 0.2178 & 0.0888 \\
 $\mean{\widehat{b}^2}$ & 3.7943 & 3.9307 & 4.0233 & 4.0146 & 4.0705 & 3.9818 & 4.0235 & 4.0148 & 4.0594 & 3.9781 & 4.0246 & 4.0143 \\
 $\sige{\widehat{b}^2}$ & 1.0540 & 0.3951 & 0.1964 & 0.0979 & 0.8183 & 0.4022 & 0.1939 & 0.0967 & 0.8020 & 0.3958 & 0.1898 & 0.0949 \\[-2.0pt]
\midrule
\multicolumn{13}{c}{$H_1 = 0.1$, $H_2 = 0.5$} \\[-2.0pt]
\midrule
 $\mean{\widehat{H}_1}$ & 0.0912 & 0.1007 & 0.0993 & 0.1000 & 0.0646 & 0.0999 & 0.0988 & 0.0999 & 0.0420 & 0.0931 & 0.0965 & 0.0996 \\
 $\sige{\widehat{H}_1}$ & 0.0710 & 0.0218 & 0.0108 & 0.0047 & 0.3199 & 0.0297 & 0.0143 & 0.0062 & 0.5585 & 0.0626 & 0.0260 & 0.0109 \\
 $\mean{\widehat{H}_2}$ & 0.5000 & 0.4999 & 0.5000 & 0.5000 & 0.5001 & 0.4999 & 0.5000 & 0.5000 & 0.5004 & 0.5000 & 0.5000 & 0.5000 \\
 $\sige{\widehat{H}_2}$ & 0.0042 & 0.0017 & 0.0009 & 0.0004 & 0.0033 & 0.0013 & 0.0007 & 0.0003 & 0.0017 & 0.0008 & 0.0004 & 0.0002 \\
 $\mean{\widehat{a}^2}$ & 3.8037 & 4.1390 & 4.0716 & 4.0197 & 4.6736 & 5.5225 & 4.1137 & 4.0256 & 0.0849 & 3.5420 & 4.5804 & 4.0641 \\
 $\sige{\widehat{a}^2}$ & 3.5598 & 0.5714 & 0.2013 & 0.0976 & 8.7652 & 11.2864 & 0.2976 & 0.1231 & 48.779 & 6.0159 & 2.0301 & 0.2967 \\
 $\mean{\widehat{b}^2}$ & 4.0703 & 4.0809 & 4.0120 & 4.0029 & 4.0614 & 4.0776 & 4.0119 & 4.0028 & 3.9782 & 4.0721 & 4.0118 & 4.0026 \\
 $\sige{\widehat{b}^2}$ & 0.7711 & 0.3679 & 0.1921 & 0.0882 & 0.7615 & 0.3631 & 0.1896 & 0.0874 & 0.9350 & 0.3572 & 0.1866 & 0.0863 \\[-2.0pt]
\midrule
\multicolumn{13}{c}{$H_1 = 0.3$, $H_2 = 0.5$} \\[-2.0pt]
\midrule
 $\mean{\widehat{H}_1}$ & 0.3012 & 0.3005 & 0.3002 & 0.3000 & 0.2998 & 0.3000 & 0.3001 & 0.3000 & 0.2999 & 0.2997 & 0.3000 & 0.3000 \\
 $\sige{\widehat{H}_1}$ & 0.0203 & 0.0065 & 0.0034 & 0.0017 & 0.0211 & 0.0072 & 0.0036 & 0.0018 & 0.0300 & 0.0105 & 0.0050 & 0.0025 \\
 $\mean{\widehat{H}_2}$ & 0.5001 & 0.5000 & 0.5000 & 0.5000 & 0.5001 & 0.5000 & 0.5000 & 0.5000 & 0.5000 & 0.5000 & 0.5000 & 0.5000 \\
 $\sige{\widehat{H}_2}$ & 0.0041 & 0.0016 & 0.0008 & 0.0004 & 0.0033 & 0.0013 & 0.0007 & 0.0003 & 0.0017 & 0.0008 & 0.0004 & 0.0002 \\
 $\mean{\widehat{a}^2}$ & 4.0112 & 4.0211 & 4.0135 & 4.0081 & 4.0145 & 4.0120 & 4.0105 & 4.0078 & 4.0123 & 4.0105 & 4.0089 & 4.0075 \\
 $\sige{\widehat{a}^2}$ & 0.4421 & 0.2151 & 0.1108 & 0.0543 & 0.3981 & 0.1983 & 0.1021 & 0.0502 & 0.3652 & 0.1856 & 0.0952 & 0.0467 \\
 $\mean{\widehat{b}^2}$ & 4.0110 & 4.0132 & 4.0100 & 4.0080 & 4.0108 & 4.0125 & 4.0103 & 4.0080 & 4.0105 & 4.0120 & 4.0101 & 4.0079 \\
 $\sige{\widehat{b}^2}$ & 0.4012 & 0.1920 & 0.0961 & 0.0482 & 0.3920 & 0.1873 & 0.0928 & 0.0465 & 0.3845 & 0.1821 & 0.0912 & 0.0453 \\[-2.0pt]
\midrule
\multicolumn{13}{c}{$H_1 = 0.3$, $H_2 = 0.7$} \\[-2.0pt]
\midrule
 $\mean{\widehat{H}_1}$ & 0.3001 & 0.3000 & 0.3000 & 0.3000 & 0.3001 & 0.3000 & 0.3000 & 0.3000 & 0.3002 & 0.3000 & 0.3000 & 0.3000 \\
 $\sige{\widehat{H}_1}$ & 0.0202 & 0.0064 & 0.0033 & 0.0017 & 0.0210 & 0.0071 & 0.0035 & 0.0017 & 0.0299 & 0.0104 & 0.0049 & 0.0025 \\
 $\mean{\widehat{H}_2}$ & 0.7000 & 0.7000 & 0.7000 & 0.7000 & 0.7000 & 0.7000 & 0.7000 & 0.7000 & 0.7000 & 0.7000 & 0.7000 & 0.7000 \\
 $\sige{\widehat{H}_2}$ & 0.0035 & 0.0015 & 0.0007 & 0.0003 & 0.0027 & 0.0010 & 0.0005 & 0.0002 & 0.0016 & 0.0008 & 0.0004 & 0.0002 \\
 $\mean{\widehat{a}^2}$ & 4.0102 & 4.0121 & 4.0105 & 4.0078 & 4.0108 & 4.0118 & 4.0104 & 4.0079 & 4.0105 & 4.0117 & 4.0102 & 4.0078 \\
 $\sige{\widehat{a}^2}$ & 0.4441 & 0.2120 & 0.1090 & 0.0530 & 0.3960 & 0.1952 & 0.1018 & 0.0495 & 0.3615 & 0.1835 & 0.0938 & 0.0453 \\
 $\mean{\widehat{b}^2}$ & 4.0112 & 4.0125 & 4.0103 & 4.0079 & 4.0109 & 4.0120 & 4.0102 & 4.0078 & 4.0106 & 4.0118 & 4.0101 & 4.0078 \\
 $\sige{\widehat{b}^2}$ & 0.4010 & 0.1915 & 0.0958 & 0.0478 & 0.3921 & 0.1865 & 0.0924 & 0.0462 & 0.3825 & 0.1815 & 0.0909 & 0.0451 \\[-2.0pt]
\midrule
\multicolumn{13}{c}{$H_1 = 0.5$, $H_2 = 0.7$} \\[-2.0pt]
\midrule
 $\mean{\widehat{H}_1}$ & 0.5000 & 0.5000 & 0.5000 & 0.5000 & 0.5000 & 0.5000 & 0.5000 & 0.5000 & 0.5000 & 0.5000 & 0.5000 & 0.5000 \\
 $\sige{\widehat{H}_1}$ & 0.0042 & 0.0017 & 0.0008 & 0.0004 & 0.0033 & 0.0013 & 0.0007 & 0.0003 & 0.0017 & 0.0008 & 0.0004 & 0.0002 \\
 $\mean{\widehat{H}_2}$ & 0.7000 & 0.7000 & 0.7000 & 0.7000 & 0.7000 & 0.7000 & 0.7000 & 0.7000 & 0.7000 & 0.7000 & 0.7000 & 0.7000 \\
 $\sige{\widehat{H}_2}$ & 0.0035 & 0.0015 & 0.0007 & 0.0003 & 0.0027 & 0.0010 & 0.0005 & 0.0002 & 0.0016 & 0.0008 & 0.0004 & 0.0002 \\
 $\mean{\widehat{a}^2}$ & 4.0105 & 4.0120 & 4.0103 & 4.0078 & 4.0108 & 4.0117 & 4.0102 & 4.0078 & 4.0105 & 4.0116 & 4.0101 & 4.0078 \\
 $\sige{\widehat{a}^2}$ & 0.4425 & 0.2125 & 0.1095 & 0.0532 & 0.3952 & 0.1955 & 0.1020 & 0.0496 & 0.3618 & 0.1837 & 0.0939 & 0.0454 \\
 $\mean{\widehat{b}^2}$ & 4.0111 & 4.0124 & 4.0102 & 4.0078 & 4.0109 & 4.0120 & 4.0101 & 4.0078 & 4.0106 & 4.0117 & 4.0100 & 4.0078 \\
 $\sige{\widehat{b}^2}$ & 0.4012 & 0.1918 & 0.0959 & 0.0479 & 0.3920 & 0.1867 & 0.0925 & 0.0463 & 0.3827 & 0.1817 & 0.0910 & 0.0452 \\[-2pt]
\bottomrule
\end{tabular}
\end{table}

\begin{appendix}
\section{Proofs of auxiliary results}
\label{app:A}

\begin{proof}[Proof of Lemma \ref{covarfar}.]
    Let $s \geq t$. Then 
 \begin{align*}
\bE[X_t^h X_s^h]&= \bE\left[ \left(\frac{1}{h}\int_{t}^{t+h} W_u^H du\right) \left(\frac{1}{h}\int_{s}^{s+h} W_v^H dv\right) \right]\\
&= \frac{1}{h^2}\int_{0}^{h} \!\!\int_{0}^{h} \bE [W_{u+t}^H W_{v+s}^H] du\, dv\\
&=\frac{1}{2h^2} \int_{0}^{h}\!\! \int_{0}^{h} ((u+t)^{2H} +(v+s)^{2H}-|v-u+s-t|^{2H})du\, dv\\
&= \frac{1}{2h^2} \biggl( \int_{0}^{h}\!\! \int_{0}^{h} (u+t)^{2H} du\; dv + \int_{0}^{h}\!\! \int_{0}^ {h} (v+s)^{2H} du\, dv\\*
&\qquad\qquad\qquad\qquad\qquad-\int_{0}^{h}\!\! \int_{0}^{h} |v-u+s-t|^{2H} du\, dv\biggr)\\&=:  \frac{1}{2h^2} \left(J_1+J_2-J_3\right).
\end{align*}
Obviously, 
\[
J_1=\frac{h}{2H+1}\big((t+h)^{2H+1}-t^{2H+1}\big),\qquad
J_2=\frac{h}{2H+1}\big((s+h)^{2H+1}-s^{2H+1}\big).
\]
To calculate $J_3$, denote for simplicity $\rho=s-t\ge 0$. Then 
$$J_3=J(\rho, h)=\int_{0}^{h} \!\!\int_{0}^{h} |v-u+\rho|^{2H} du\, dv.$$ Consider two cases. If $\rho\ge h,$  then 
\begin{align*}
 J(\rho, h)&=\int_{0}^{h} \!\!\int_{0}^{h} (v-u+\rho)^{2H} du\,dv=\frac{1}{2H+1} \int_{0}^{h} [(h+\rho-u)^{2H+1}-(\rho-u)^{2H+1}]du\\&=\frac{1}{2H+1}[(h+\rho )^{2H+2}+( \rho-h)^{2H+2}-2\rho^{2H+2}].  
\end{align*}
If  $\rho\le h,$  then 
\begin{align*}
 J(\rho, h)&=\int_{0}^{\rho}\!\! \int_{0}^{h} (v-u+\rho)^{2H} dv\, du+\int_{\rho}^{h} \!\!\int_{0}^{u-\rho} (u-v-\rho)^{2H} dv\, du  +\int_{\rho}^{h}\!\! \int_{u-\rho}^{h} (v-u+\rho)^{2H} dv\, du\\&=\frac{1}{2H+1}\bigg[ \int_{0}^{\rho} [(h+\rho-u)^{2H+1}-(\rho-u)^{2H+1}]du\\
 &\qquad\qquad\qquad+\int_{\rho}^{h}( u-\rho)^{2H+1} du+\int_{\rho}^{h}(h- u+\rho)^{2H+1} du\bigg]\\&=\frac{1}{(2H+1)(2H+2)}\left[(h+\rho)^{2H+2}+(h-\rho)^{2H+2}-2\rho^{2H+2}\right],  
\end{align*}
and the proof follows. 
\end{proof}

\begin{proof}[Proof of Lemma \ref{H0}.] 
$(i)$  
Of course, it is enough to analyze the sign of the numerator, that is, of the function $  f(H) := 7 - 2^{2H+4} + 3^{2H+2}$. Note that  $  f(0) = 0$ and $  f'(H) = - 2 \log 2 \cdot 2^{2H+4} + 2 \log 3 \cdot 3^{2H+2}$, which is positive for 
$$ H > \widebar H := \frac{\log \frac{4 \log 2}{\log 3}}{2 \log \frac32} - 1 = \frac{\log \frac{16 \log 2}{9 \log 3}}{2 \log \frac32} \simeq 0.14157 $$
and negative for $ H < \widebar H$.
Thus, $  f$ is decreasing and negative  in $(0,\widebar H]$, and increasing in $[\widebar H,1]$. Since   
 $ f(\widebar H) < 0$ 
and $  f(1) = 7 - 64 + 81 > 0$, it follows that there exists a unique $H_0 \in [\widebar H,1]$ such that $  f(H_0) = 0$, consequently $\gamma(H_0,1) = 0$. 

$(ii)$ In this case 
\begin{multline*}
    \bE[(X_{t+h}^h- X_t^h)(X_{t+(n+1)h}^h- X_{t+nh}^h)]\\ = \int_0^h \!\!\int_0^h \bE[(W_{u+t+h}^H- W_{u+t}^H)(W_{v+t+(n+1)h}^H- W_{v+t+nh}^H)] du\;dv,   
\end{multline*}
and for $n\ge 2$   $(u+t, u+t+h)\cap  (v+t+nh, v+t+(n+1)h)=\emptyset$.  Therefore the sign of  $\bE[(X_{t+h}^h- X_t^h)(X_{t+(n+1)h}^h- X_{t+nh}^h)]$ follows the sign of  incremental covariance of fBm for non-overlapping intervals. 
\end{proof}
 
\begin{proof}[Proof of Lemma \ref{selfInt}]
  The claim follows because 
 \begin{align*}
   \{X_{ct}^{ch},t\ge 0\}=&\bigg\{\frac{1}{ch}\int_{ct}^{c(t+h)} W_u^H du,t\ge 0\bigg\}   \\
    =&\bigg\{\frac{1}{ch}\int_{t}^{t+h} W_{cs}^H cds, t\ge 0\bigg\}\stackrel{\text{d}}{=}\bigg\{\frac{1}{h}\int_{t}^{t+h} c^HW_{s}^Hds, t\ge 0\bigg\} \\
    =&\big\{c^HX_t^{h}, t\ge 0\big\}.\qedhere
\end{align*}
\end{proof}

\section{Selected results for Gaussian vectors}
\label{app:gaus-vect}
In this appendix we collect some properties of Gaussian random vectors and a limit theorem which is used for the proof of asymptotic normality.

\subsubsection*{Isserlis' theorem}
The following formula is known as Isserlis' theorem \cite{isserlis1918formula}: 
if $(X_1,X_2,X_3,X_4)$ is a zero-mean multivariate normal random vector, then
$$
\mathbb{E}(X_1 X_2 X_3 X_4) 
= \mathbb{E}[X_1 X_2]\, \mathbb{E}[X_3 X_4]
+ \mathbb{E}[X_1 X_3]\, \mathbb{E}[X_2 X_4]
+ \mathbb{E}[X_1 X_4]\, \mathbb{E}[X_2 X_3].
$$
In particular,
\begin{equation}\label{cov-squares}
\cov(X_1^2,X_2^2)=2\, \bigl(\cov(X_1,X_2)\bigr)^2.
\end{equation}
For the direct proof of \eqref{cov-squares} see \cite[Lemma 2.2]{WuDing}.

\subsubsection*{Hermite rank for Gaussian vectors}
In this subsection, we recall the notion of Hermite rank for functions of a Gaussian vector. 
For more details and comprehensive treatments of Hermite expansions and Wiener chaos, we refer the reader to \cite{Janson1997,NourdinPeccati2012,PeccatiTaqqu2011}.

\begin{definition}[Hermite rank \cite{Arcones1994, Nourdin2011}]
\label{def:hr}
The function $f\colon \mathbb R^d \to \mathbb R$ is said to have Hermite rank equal to $q$ with respect to a random vector $X$ if (a) $\bE[(f(X)-\bE[f(X)])p_m(X)]=0$ for every polynomial $p_m$ (on $\mathbb R^d$) of degree $m \leq q-1$; and (b) there exists a polynomial $p_q$ of degree $q$ such that $\bE[(f(X)-\bE[f(X)])p_q(X)]\neq 0$.
\end{definition}

\begin{remark}[Equivalence with Hermite expansion]
Equivalently, if $f(X)-\mathbb E[f(X)]$ is expanded in the multivariate Hermite basis
\[
f(X)-\mathbb E[f(X)] = \sum_{m\in\mathbb N_0^d,\;|m|\ge1} c_m H_m(X), 
\qquad H_m(x)=\prod_{j=1}^d H_{m_j}(x_j),
\]
then the Hermite rank equals the smallest total degree $|m|=\sum_j m_j$ such that $c_m\neq 0$.  
This follows because the multivariate Hermite polynomials form an orthogonal basis of $L^2$ for the Gaussian measure.
\end{remark}

\begin{example}[Quadratic form has Hermite rank 2]\label{ex:quad-form}
Let $X=(X_1,\dots,X_d)$ be a centered Gaussian vector, and consider the quadratic function
\[
f(x)=\sum_{i,j=1}^d \lambda_{ij} x_i x_j, \qquad \Lambda=(\lambda_{ij})\not\equiv 0.
\]
Assume that $f(X)$ is not almost surely constant (for example, $X$ has a non-degenerate covariance matrix).  
Then $f$ has Hermite rank $2$ with respect to $X$.

Indeed, the function $f-\mathbb E[f]$ is a centered polynomial of degree $2$, so it is orthogonal to all polynomials of degree $0$ or $1$ (for instance, $\mathbb E[(f-\mathbb E [f])\,X_k]=0$ because third-order centered Gaussian moments vanish).  
Since $f(X)$ is not almost surely constant, the degree-$2$ part is nonzero, hence there exists a polynomial of degree $2$ with nonzero expectation against $f-\mathbb E [f]$.  
By Definition~\ref{def:hr}, this shows that the Hermite rank of $f$ is $2$.
\end{example}

\subsubsection*{Breuer--Major theorem for stationary vectors}
\begin{theorem}[{\cite[Theorem 4]{Arcones1994}}]
\label{thm:Arcones}
Let $d \ge 1$, and let $X = \{X_k, k \in \mathbb Z\}$ be a $d$-dimensional centered stationary Gaussian process, 
$X_k = (X^{(1)}_k, \dots, X^{(d)}_k)$.
For $1 \le i,l \le d$ and $j \in \mathbb Z$, define
\[
r^{(i,l)}(j) = \mathbb E\big[X^{(i)}_1 X^{(l)}_{1+j}\big].
\]
Let $f\colon \mathbb R^d \to \mathbb R$ be a measurable function such that $\mathbb E[f^2(X_1)] < \infty$, and assume that $f$ has Hermite rank $q \ge 1$. Suppose further that
\begin{equation}\label{cond-arcones}
\sum_{j\in\mathbb Z} \left| r^{(i,l)}(j) \right|^q < \infty,
\quad \text{for all } i,l \in \{1, \dots, d\}.
\end{equation}
Then
\[
\sigma^2 := \Var \big(f(X_1)\big) + 2\sum_{k=1}^\infty \cov \big(f(X_1), f(X_{1+k})\big)
\]
is finite and nonnegative. Moreover,
\[
\frac{1}{\sqrt{n}} \sum_{k=1}^n \big(f(X_k) - \mathbb E[f(X_k)]\big)
\xrightarrow{d} \mathcal N(0,\sigma^2),
\quad \text{as } n \to \infty.
\]
\end{theorem}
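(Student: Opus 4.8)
The plan is to prove the theorem through the Wiener chaos decomposition of $f$ combined with the fourth moment theorem of Nualart and Peccati. First I would realize the stationary family $\{X_k\}$ as a subset of an isonormal Gaussian process, so that each coordinate $X_k^{(i)}$ lies in the first Wiener chaos, and expand the centered functional in the multivariate Hermite basis,
\[
f(X_k)-\mathbb E[f(X_k)]=\sum_{|m|\ge q} c_m H_m(X_k),
\]
where the expansion starts at total degree $q$ precisely because $f$ has Hermite rank $q$, and $H_m(X_k)$ belongs to the chaos of order $|m|$. Writing $S_n=n^{-1/2}\sum_{k=1}^n(f(X_k)-\mathbb E[f(X_k)])$ and grouping by chaos order gives the orthogonal decomposition $S_n=\sum_{p\ge q}S_n^{(p)}$, where $S_n^{(p)}$ is the projection of $S_n$ onto the $p$-th chaos and $S_n^{(p)}=I_p(f_{n,p})$ for a symmetric kernel $f_{n,p}$.

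The core step is a central limit theorem for each fixed component $S_n^{(p)}$. By the diagram (Isserlis) formula, the Hermite/Wick structure forces $\operatorname{Cov}(H_m(X_0),H_{m'}(X_j))$ to be a finite linear combination of products of exactly $p=|m|=|m'|$ factors $r^{(i,l)}(j)$, all at lag $j$ (no within-group contractions survive). Hence the stationary lag-$j$ covariance $\rho_p(j)$ of the $p$-chaos summands satisfies $|\rho_p(j)|\le C\,(\max_{i,l}|r^{(i,l)}(j)|)^p$. Since \eqref{cond-arcones} forces $r^{(i,l)}(j)\to0$, for every $p\ge q$ we have $|r^{(i,l)}(j)|^p\le|r^{(i,l)}(j)|^q$ for all large $|j|$, so $\sum_j|\rho_p(j)|<\infty$. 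The Ces\`aro-weighted identity $\operatorname{Var}(S_n^{(p)})=\sum_{|j|<n}(1-|j|/n)\rho_p(j)$ then converges to $\sigma_p^2:=\sum_{j\in\mathbb Z}\rho_p(j)$ by dominated convergence. To upgrade convergence of variances to asymptotic normality, I would invoke the fourth moment theorem: it suffices to check that every contraction norm $\|f_{n,p}\otimes_r f_{n,p}\|$, $1\le r\le p-1$, tends to zero. These norms are again expressible as multi-lag sums of products of the $r^{(i,l)}$, and the same passage from $q$-summability to $p$-summability makes them vanish.

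Finally I would assemble the pieces. The multivariate fourth moment theorem yields joint convergence of any finite block $(S_n^{(q)},\dots,S_n^{(M)})$ to independent centered Gaussians, which by orthogonality of the chaoses gives $\sum_{p=q}^M S_n^{(p)}\xrightarrow{d}\mathcal N\bigl(0,\sum_{p=q}^M\sigma_p^2\bigr)$. The tail is controlled uniformly in $n$ by the bound $\operatorname{Var}(S_n^{(p)})\le\sum_j|\rho_p(j)|$, whose sum over $p>M$ tends to $0$ as $M\to\infty$; a standard truncation argument (Slutsky together with a uniform second-moment tail estimate) then transfers the CLT to the full sum $S_n$, with limiting variance $\sigma^2=\sum_{p\ge q}\sigma_p^2=\sum_{j\in\mathbb Z}\operatorname{Cov}(f(X_0),f(X_j))$, which matches the stated expression and is finite and nonnegative as a convergent limit of variances.

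The main obstacle is the contraction estimate in the second step: bounding $\|f_{n,p}\otimes_r f_{n,p}\|$ requires careful combinatorics on the connected diagrams linking four copies of the kernel, and showing that each resulting multi-lag sum of products of covariances is dominated by the $q$-summable quantity $\sum_j|r^{(i,l)}(j)|^q$. This is where hypothesis \eqref{cond-arcones} is used most delicately, since one must trade the several distinct lags appearing inside a contraction against the single summable exponent $q$ via H\"older- or Young-type inequalities.
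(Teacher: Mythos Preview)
The paper does not prove this theorem: it is stated in Appendix~\ref{app:gaus-vect} purely as a cited result from \cite{Arcones1994}, with no argument given. So there is no ``paper's own proof'' to compare against.

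Your sketch is a sound outline of the modern Malliavin--Stein approach to Breuer--Major theorems, essentially the route taken in \cite{Nourdin2011,NourdinPeccati2012}: chaos expansion, variance convergence on each chaos via the diagram formula, the fourth moment theorem applied chaos-by-chaos through contraction estimates, and a truncation to pass from finitely many chaoses to the full sum. This is genuinely different from Arcones' original argument, which proceeds by the method of moments (showing directly that all joint moments of the normalized partial sums converge to Gaussian moments via combinatorics of diagrams) rather than through Malliavin calculus. Your route is cleaner conceptually and gives access to rates of convergence; Arcones' is more elementary in its tools.

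One point to watch in your outline: the uniform-in-$n$ tail bound $\sum_{p>M}\operatorname{Var}(S_n^{(p)})\to0$ as $M\to\infty$ is not automatic from $\sum_j|\rho_p(j)|<\infty$ alone, because the implicit constant in $|\rho_p(j)|\le C_p(\max_{i,l}|r^{(i,l)}(j)|)^p$ involves both the Hermite coefficients $c_m$ with $|m|=p$ and a combinatorial factor from the number of pairings. The correct control is $\operatorname{Var}(S_n^{(p)})\le\bigl(\sum_{|m|=p}m!\,c_m^2\bigr)\cdot\bigl(\text{a factor bounded uniformly in }p\text{ built from }\sum_j|r(j)|^q\bigr)$, so that summing over $p$ recovers $\operatorname{Var}(f(X_1))<\infty$; this is where the hypercontractivity/diagram bookkeeping has to be done carefully, and it is worth being explicit about it rather than folding it into the contraction step.
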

\end{appendix}

\section*{Acknowledgments}
Marco Mastrogiovanni, Stefania Ottaviano and Tiziano Vargiolu are supported by the INdAM - GNAMPA Project code CUP E53C23001670001.

Yuliya Mishura is supported by the Japan Science and Technology Agency CREST, project reference number JPMJCR2115, and is grateful to Padova University for its long-term hospitality. 

Kostiantyn Ralchenko is supported by the Research Council of Finland, decision number 367468.

Tiziano Vargiolu is supported by the projects funded by the European Union –
NextGenerationEU under the National Recovery and Resilience Plan (NRRP),
Mission 4 Component 2 Investment 1.1 - Call PRIN 2022 No. 104 of February
2, 2022 of Italian Ministry of University and Research; Project 2022BEMMLZ
(subject area: PE - Physical Sciences and Engineering) ``Stochastic
control and games and the role of information'', and Call PRIN 2022 PNRR
No. 1409 of September 14, 2022 of Italian Ministry of University and
Research; Project P20224TM7Z (subject area: PE - Physical Sciences and
Engineering) ``Probabilistic methods for energy transition''.

\end{document}